\newtheorem{theorem}{Theorem}[section]
\newtheorem{proposition}[theorem]{Proposition}
\newtheorem{lemma}[theorem]{Lemma}
\newtheorem{remark}[theorem]{Remark}
\newtheorem{definition}[theorem]{Definition}
\newtheorem{example}[theorem]{Example}
\newcommand{\bz}{\mathbb{Z}}
\newcommand{\br}{\mathbb{R}}
\newcommand{\bc}{\mathbb{C}}
\newcommand{\bp}{\mathbb{P}}
\newcommand{\bs}{\mathbb{S}}
\newcommand{\co}{\mathcal{O}}
\newcommand{\lr}{\longrightarrow}
\newcommand{\wt}{\widetilde}
\newcommand{\im}{\textrm{Im}}
\newcommand{\re}{\textrm{Re}}
\newcommand{\alg}{\textrm{alg}}
\begin{document}
\baselineskip=15.5pt
\title[Complex structures on product of circle bundles]{Complex structures on product of circle bundles over complex manifolds\\
Structures complexes sur les produit de fibr\'e en cercles au 
dessus des vari\'et'es complexes}
\author[P. Sankaran]{Parameswaran Sankaran}
\author[A. S. Thakur]{Ajay Singh Thakur}
\address{The Institute of Mathematical Sciences, CIT
Campus, Taramani, Chennai 600113, India}
\address{Indian Statistical Institute, 8th Mile, Mysore Road, RVCE Post Bangalore 560059, India}
\email{sankaran@imsc.res.in}
\email{thakur@isibang.ac.in}
\date{}

\footnote{AMS Mathematics Subject Classification (2010): 32L05 (seondary: 32J18, 32Q55)\\
Keywords: circle bundles, complex manifolds, homogeneous spaces, 
Picard groups, meromorphic function fields (fibr\'e en cercles, 
vari\'et\'es complexes, espaces homog\'enes, groupes de  Picard, corpses des fonctions meromorphes).}
\maketitle 

\noindent
{\bf Abstract:}  {\it Let $\bar{L}_i\lr X_i$ be a holomorphic 
line bundle over a compact complex manifold for $i=1,2$. 
 Let $S_i$ denote the associated principal circle-bundle with respect to some hermitian inner product on $\bar{L}_i$. We construct complex structures on 
$S=S_1\times S_2$ which we refer to as 
{\em scalar, diagonal,  and linear types}. While scalar 
type structures always exist, the more general diagonal but non-scalar type structures 
are constructed assuming that $\bar{L}_i$ are equivariant 
$(\bc^*)^{n_i}$-bundles satisfying 
some additional conditions. The linear type complex 
structures are constructed assuming $X_i$ are 
(generalized) flag varieties and 
$\bar{L}_i$ negative ample line bundles over $X_i$.  
When $H^1(X_1;\br)=0$ and $c_1(\bar{L}_1)\in H^2(X_1;\br)$ is non-zero, the compact manifold $S$ does not admit any symplectic structure and hence it is non-K\"ahler with respect to {\em any} complex structure.    

We obtain a vanishing theorem for $H^q(S;\mathcal{O}_S)$ when $X_i$ are projective manifolds, $\bar{L}_i^\vee$ are very ample and the cone over $X_i$ with respect to the projective imbedding defined by $\bar{L}_i^\vee$ are Cohen-Macaulay. We obtain applications to 
the Picard group of $S$. 
 When $X_i=G_i/P_i$ where $P_i$ are maximal parabolic subgroups and $S$ is endowed with linear type complex structure with `vanishing unipotent part' we show that the field of meromorphic functions on $S$ is purely transcendental over $\bc$.}

{\bf R\'esum\'e} {\it Soient $\bar{L}_i\lr X_i$ des fibr\'es en droites holomorphes sur des vari\'{e}t\'{e}s complexes compactes, pour $i=1,2$.  Soit $S_i$ le fibr\'e en cercles associ\'e par rapport \'a un produit scalaire hermitienne sur $\bar{L}_i$. On construit des structures 
complexes sur $S=S_1\times S_2$ dites des type scalaire, 
digonal, ou lin\'{e}aire. Bien que des structures du type 
scalaire existent toujours, on construit des structures 
plus g\'{e}n\'{e}rales des type diagonal mais non-scalaire 
dans le cas o\`u les $\bar{L}_i$ soient des $(\mathbb{C}^*)^{n_i}$ fibr\'{e}s \'{e}quivariants et qui 
v\'{e}rifient certaines hypotheses supplimentaires. Les 
structures complexes du type lin\'{e}aire des vari\'{e}t\'{e} des drapeaux (g\'{e}n\'{e}ralis\'{e}es 
et que les $L_i$ soient des fibr\'{e}s en droites ample 
n\'{e}gatifs.  Lorsque $H^1(X_1;\mathbb{R})=0$ et 
$c_1(\bar{L}_1)=0$ est non-nul la vari\'{e}t\'{e} compacte 
$S$ n'admet pas de structure symplectique et donc elle 
est non-K\"ahlerienne par rapport a toute structure complexe. 

On montre que $H^q(S;\mathcal{O}_S)$ s'annulle quand 
les $X_i$ sont des vari\'{e}t\'{e}s projectives, 
les $\bar{L}_i^\vee$ son tr\`{e}s amples et le c$\hat{\textrm{o}}$ne 
sur $X_i$ par rapport au plongement projectif d\'{e}fini 
par $\bar{L}_i^\vee$ sont Cohen-Macaulay.  On applique 
c'est r\'{e}sultats au groupe de Picard de $S$. Quand 
$X_i=G_i/P_i$ o\`u $P_i$ sont les 
sousgroupes paraboliques maximaux et la vari\'{e}t\'{e}     
$S$ est munie d'une structure complexe du type lin\'{e}aire avec `la partie unipotente nulle' on montre que le corps des fonctions meromorphes sur $S$ est p$\hat{\textrm{u}}$rement transcendental sur $\mathbb{C}$.} 

\section{Introduction} 
H. Hopf \cite{hopf} gave the first examples of 
compact complex manifolds which are non-K\"ahler by showing that $\bs^1\times\bs^{2n-1}$ admits a complex structure for any positive integer $n$.
Calabi and Eckmann \cite{ce} showed 
that product of any two odd dimensional spheres 
admit complex structures.   Douady \cite{douady}, Borcea \cite{borcea} and Haefliger \cite{haefliger} studied deformations of the Hopf manifolds, and, 
Loeb and Nicolau \cite{ln}, following Haefliger's ideas, 
studied the deformations of complex structures 
on Calabi-Eckmann manifolds. More recently, 
there have been many generalizations of Calabi-Eckmann 
manifolds leading to new classes of compact 
complex non-K\"ahler manifolds  by 
L\'opez de Madrano and Verjovsky \cite{lv},  Meersseman \cite{meersseman}, 
Meersseman and Verjovsky  \cite{mv}, and Bosio \cite{b}.  See also \cite{rs} and \cite{sankaran}.

In this paper we obtain another generalization  
of the classical Calabi-Eckmann manifolds.  Our 
approach is greatly influenced by the work of Haefliger \cite{haefliger} 
and of Loeb-Nicolau \cite{ln} in that the compact complex manifolds we obtain arise 
as orbit spaces of holomorphic $\bc$-actions on the   product of two holomorphic principal $\bc^*$-bundles over compact complex manifolds.  As a differentiable manifold 
it is just the product of the associated circle bundles. 
In fact we obtain a family 
of complex analytic manifolds which may be 
thought of as a deformation of the total space of a holomorphic elliptic curve bundle over the product of  the compact complex manifolds, in much the same way 
the construction of Haefliger (resp. Loeb and Nicolau) 
yields a deformation of the classical Hopf (resp.  Calabi-Eckmann) manifolds.  

The basic construction involves the notion of 
standard action by the torus $(\bc^*)^{n_1}$ on a principal $\bc^*$-bundle $L_1$ over a complex manifold $X_1$.  See Definition \ref{standard}. Let $L=L_1\times L_2$ and $X=X_1\times X_2$.
When $L_i\lr X_i$ admit standard actions, any choice 
of a sequence of complex numbers $\lambda=(\lambda_1,\ldots, \lambda_N)$, $N=n_1+n_2,$ satisfying the {\it weak hyperbolicity condition of type} $(n_1,n_2)$  (in the sense of Loeb-Nicolau \cite[p. 788]{ln}) leads to a complex structure on the product $S(L):=S(L_1)\times S(L_2)$ where $S(L_i)$ denotes the circle 
bundle over $X_i$ associated to $L_i$.   This is the 
{\it diagonal} type complex structure on $S(L)$.  The 
complex structure on $S(L)$ is obtained by identifying 
$S(L)$ as the orbit space of a certain $\bc$-action determined by $\lambda$ on $L$ and by showing that 
the quotient map $L\lr L/\bc$ is the projection of a holomorphic principal $\bc$-bundle (Theorem \ref{cbundle}). The scalar type structure arises as a special case of the diagonal type where $(\bc^*)^{n_i}=\bc^*$ is the structure group of $L_i$, $i=1,2$.   In the case of scalar type complex structure the differentiable $\bs^1\times \bs^1$-bundle with projection $S(L)\lr X$ is a holomorphic principal bundle with fibre and structure group an elliptic curve. 
  
Under the hypotheses that $H^1(X_1;\br)=0$ and $c_1(\bar{L}_1)\in H^2(X_1;\br)$ is non-zero, we show that $S(L)$ is not symplectic and is non-K\"ahler with respect to any complex structure (Theorem \ref{symplectic}).  

The construction of linear type complex structure 
is carried out under the assumption that $X_i$ is a generalized flag variety $G_i/P_i$, $i=1,2$, where $G_i$ is a simply connected semi simple linear algebraic group over $\bc$ and $P_i$ a parabolic subgroup and the associated line bundle $\bar{L}_i$ over $X_i$ is negative ample. 
In this case $L_i$ is acted on by the reductive group 
$\wt{G}_i=G_i\times \bc^*$ in such a manner that the action of a maximal torus $\wt{T}_i\subset \wt{G}_i$ on $L_i$ is standard (Proposition \ref{homogstd}).  Fix a Borel 
subgroup $\wt{B}_i\supset \wt{T}_i$ and choose an element $\lambda \in Lie(\wt{B})$ where $\wt{B}=\wt{B}_1\times \wt{B}_2\subset \wt{G}_1\times \wt{G}_2=:\wt{G}$.  Writing the Jordan decomposition $\lambda=\lambda_s+\lambda_u$ where $\lambda_s$ belongs to the the Lie algebra of $\wt{T}:=\wt{T}_1\times \wt{T_2},$     
we assume that $\lambda_s$ satisfies the weak hyperbolicity condition. For each such $\lambda$ we 
have a complex structure on $S(L)$ of {\it linear type}. 
(See Theorem \ref{lineartype}.) 
We show that $H^q(S_\lambda(L);\mathcal{O})$ vanishes for most values of $q.$   This result is valid in   greater generality; see Theorem \ref{cohomologyofs} for precise statements. We deduce that $Pic^0(S_\lambda(L))\cong \bc$, assuming that if $X_i=\bp^1$, then $\bar{L}_i$ is the generator of $Pic(\bp^1)\cong \bz$. ( See Theorem \ref{picard0}.)
When $P_i$ are maximal parabolic subgroups and $\lambda_u=0$, we show that the meromorphic function field of $S_\lambda(L)$ is a purely transcendental extension of $\bc$. 
(Theorem \ref{transcendence}).   

Our proofs  in \S2 follow mainly the ideas of Loeb and Nicolau \cite{ln}.   
The construction of linear type complex structure is a generalization of the linear type complex structures on $\bs^{2m-1}\times \bs^{2n-1}$ given in \cite{ln} to the more general context where the base space is a product of generalized 
flag varieties. We use the K\"unneth formula due to A. Cassa \cite{cassa}, besides projective normality and 
arithmetic Cohen-Macaulayness of generalized flag varieties (\cite{rr}, \cite{ramanathan}), for obtaining our results on the cohomology groups $H^q(S(L);\mathcal{O})$.  Construction of linear type complex structure, applications to Picard groups and the field of meromorphic functions on $S(L)$ when $X_i=G_i/P_i$ involve some elementary concepts from representation theory of algebraic groups. 

\begin{center}
{\bf Notations}
\end{center}
The following notations will be used throughout.

\begin{tabular}{ll}
$X_1,X_2$ & compact complex manifolds\\
$L_1,L_2$ & principal $\bc^*$-bundles\\
$L$ & $L_1\times L_2$\\
$\bar{L}_i$ & line bundle associated to $L_i$\\
$\bar{L}^\vee$ & dual of $\bar{L}$\\
$\hat{L}_i$  & cone over $X_i$ with respect to 
$\bar{L}_i^\vee$ when $\bar{L}_i$ is negative ample\\
$T_i, T, \wt{T}_i,\wt{T}$ & complex algebraic tori $T=T_1\times T_2$, 
$ \wt{T}_i=T_i\times \bc^*$, $\wt{T}=\wt{T}_1\times \wt{T}_2 $\\
$G, G_i$ & semi simple complex algebraic groups\\
$\wt{G}_i, \wt{G}$ &$\wt{G}_i=G_i\times \bc^*$, $\wt{G}=G\times \bc^*$\\
$P_i$ & a parabolic subgroup of $G_i$\\ 
$\omega_i, \omega$ & dominant integral weights\\
$V(\omega_i)$ & finite dimensional irreducible $G_i$-module of highest weight $\omega_i$\\
$V(\omega_i)^\vee$ & vector space dual to $V(\omega_i)$\\
$V(\omega_1, \omega_2)$ & $V(\omega_1)\times V(\omega_2)\setminus (V(\omega_1)\times 0\cup 0\times V(\omega_2))$\\
$\Lambda(\omega_i), \Lambda(\omega_1,\omega_2)$  & weights of $V(\omega_i)$, resp. $V(\omega_1)\times V(\omega_2)$\\
$R(G), R$ & roots of $G$ with respect to a maximal torus $T$\\
$\Phi$ & set of simple roots\\
$R^+, R_{P_i}$ & positive roots, resp. set of positive roots of $G_i$ which are not \\
 & the roots of Levi part of $P_i$\\
 $V\hat{\otimes}V'$ & completed tensor product of Fr\'echet-nuclear spaces $V,V'$\\
 $X_\beta, Y_\beta, H_\beta$ & Chevalley basis elements of a reductive Lie algebra\\
$\mathcal{O}$ & structure sheaf of an analytic space\\
$\mathcal{R}^+, \mathcal{R}^-$ & $\{x+\sqrt{-1}y\in \bc\mid x>0\}$,  resp. $\{x+\sqrt{-1}y\in \bc\mid x<0\}$\\
$I$ & unit interval $[0,1]\subset \br$\\
$\mathcal{T}_pM$ & tangent space to a manifold $M$ at a point $p$.\\

\end{tabular}

\section{Basic construction} 
Let $X_1,X_2$ be any two compact complex manifolds and let $p_1:L_1\lr X_1$ and $p_2: L_2\lr X_2$ be holomorphic principal $\bc^*$-bundles over $X_1$ and $X_2$ respectively.  Denote by 
$p:L_1\times L_2=:L\lr X:=X_1\times X_2$ the product 
$\bc^*\times \bc^*$-bundle.   
We shall denote by $\bar{L}_i$ 
the line bundle associated to $L_i$ and identify $X_i$ with the zero cross-section in $L_i$  so that $L_i=\bar{L}_i\setminus X_i$. 
We put a hermitian metric on $L_i$ invariant under the action of $\bs^1\subset \bc^*$ and denote by $S(L_i)\subset L_i$ the unit sphere bundle 
with fibre and structure group $\bs^1$. We shall denote by $S(L)$ the $\bs^1\times \bs^1$-bundle 
$S(L_1)\times S(L_2)$.  Our aim is to study complex structures on $S(L)$ arising from holomorphic 
principal $\bc$-bundle structures on $L$ with base space $L/\bc$. Such a bundle arises from the holomorphic foliation associated to certain holomorphic vector field whose integral curves are 
biholomorphic to $\bc$.     
For the vector fields we consider,  
the space of leaves $L/\bc$ can be identified with
$S(L)$ as a differentiable manifold and 
the complex structure on $S(L)$ is induced 
from that on $L/\bc$ via this identification. 

In this section we consider holomorphic $\bc$-actions on $L$ which lead to complex structure on $S(L)$ of {\it scalar} and {\it diagonal} types. 
Whereas scalar type complex structures always exist, in order to obtain the more general diagonal type complex structure which are {\it not} of scalar type we 
need additional hypotheses.   

Given any complex number $\tau$ such that $\im(\tau)>0$, one obtains a proper holomorphic imbedding  $\bc\lr \bc^*\times \bc^*$ defined as $z\mapsto (\exp(2\pi iz),\exp(2\pi i\tau z))$. We shall denote the image by $\bc_\tau$.  The action of the structure group $\bc^*\times \bc^*$ on $L$ can be restricted to $\bc$ via the above imbedding to obtain a holomorphic principal $\bc$-bundle with total space $L$ and base space the quotient space $S_\tau(L):=L/\bc_\tau$. 
Clearly the projection $L\lr X$ factors through $S_\tau(L)$ 
to yield a principal bundle $S_\tau(L)\lr X$ with fibre and 
structure group $\mathbb{E}:=(\bc^*\times \bc^*)/\bc_\tau$.  Since $\mathbb{E}$ is a Riemann surface with 
fundamental group isomorphic to $\bz^2$, it is an elliptic curve.
It can be seen that $\mathbb{E}\cong \bc/\Gamma$ where $\Gamma$ is the lattice $\bz+\tau\bz\subset \bc$.  
It is easily seen that $S_\tau(L)$ is diffeomorphic to 
$S(L)=S(L_1)\times S(L_2)$. The resulting 
complex structure on $S(L)$ is referred to as {\em scalar} 
type.

Now suppose that  $\bar{L}_i, X_i$ are acted on holomorphically by the torus group $T_i\cong (\bc^*)^{n_i}$ such that 
$\bar{L}_i$ is  a $T_i$-equivariant bundle over $X_i$.  
We identify $T_i$ with $(\bc^*)^{n_i}$ by choosing an  isomorphism $T_i\cong(\bc^*)^{n_i}$.  Set $N=n_1+n_2$ and $T:=T_1\times T_2=(\bc^*)^N$.  We shall denote by $\epsilon_j:\bc^* \subset (\bc^*)^N$ the inclusion 
of the $j$th factor and write $t\epsilon_j$ to denote $\epsilon_j(t)$ for $1\leq j\leq N$.  Thus any $t=(t_1,\cdots, t_N)\in T$ equals $\prod_{1\leq j\leq N}t_j\epsilon_j$, and, under the exponential map $\bc^N\lr (\bc^*)^N$, $\sum_{1\leq j\leq N} z_je_j$ maps to $\prod \exp(z_j)\epsilon_j$.  (Here $e_j$ denotes the 
standard basis vector of $\bc^N$.)

We put a hermitian metric on $\bar{L}_i$ which 
is invariant under action of the maximal compact subgroup $K_i=(\bs^1)^{n_i}\subset T_i$.   
The following definition will be very crucial for our construction of complex structures on $S(L)$. 

\begin{definition} \label{standard}  Let $d$ be a positive integer.
We say that the $T_1=(\bc^*)^{n_1}$-action on $L_1$ is $d$-{\em standard}  (or more briefly {\em standard}) if  the following conditions hold:\\
(i) the restricted action of the diagonal subgroup $\Delta\subset T_1$  on $L_1$ is via the 
$d$-fold covering projection $\Delta\lr \bc^*$ onto the structure group $\bc^*$ of $L_1\lr X_1$. 
(Thus if $d=1$, the action of $\Delta$ coincides with that of the structure group of $L_1$.)\\
(ii) For any $0\neq v\in L_1$ and $1\leq j\leq n_1$ let $\nu_{v,j}:\br_+\lr \br_+$ be defined as $t\mapsto ||t\epsilon_j.v||$. Then $\nu'_{v,j}(t)>0$ for all 
$t$ unless $ \br_+ \epsilon_j$ is contained in the isotropy 
at $v$. 
\end{definition}

Examples of standard actions are given in \ref{std} below.
Note that condition (i) in the above definition implies that 
the $\Delta$-orbit of any $p\in L_1$ is just the fibre of 
the bundle $L_1\lr X_1$ containing $p$. The exact 
value of $d$ will not be of much significance for us.   
However, it will be too restrictive to assume $d=1$. (See Example \ref{std}(iii) and also \S3.)

Suppose that there exists a one parameter subgroup $S\cong \bc^*$ of $T_1$ such that the 
restricted $S$ action on $L_1$ is the same as that induced by a covering $S\lr \bc^*$ of the 
structure group of $L_1$.   
Then one may parametrise $T_1$ 
so that the diagonal subgroup of $(\bc^*)^{n_1}$ maps isomorphically onto $S$.  But 
it may so happen that there exists {\it no} one-parameter subgroup $S$ satisfying condition (i) with $\Delta$ replaced by $S$. Indeed, this happens for the action of the diagonal subgroup $T_1$ of $SL(2,\bc)$ on the tautological line bundle over $\bp^{1}$.  See also 
\S3. 

Condition (ii) above controls the dynamics 
of the $T_1$-action and will have important 
implications as we shall see. 
Roughly speaking condition (ii) says that, for each $v\in L_1$, the smooth curve $\sigma_{v,j}:\br_+\lr L_1$  defined as  $t\mapsto t\epsilon_j.v$ 
always ``grows outwards" unless it is a degenerate curve.   For a counterexample, consider again 
the action of the diagonal subgroup $T_1=\{diag(t,t^{-1})\mid t\in \bc^*\}$ of $SL(2,\bc)$ on $\bc^2\setminus\{0\}\lr \bp^1$.  Then $t\epsilon_1. e_1= te_1, t\epsilon_1. e_2=t^{-1}e_2$.  Therefore $\sigma_{e_1,1}$ `grows outwards' whereas $\sigma_{e_2,1}$ `grows inwards'. On the other hand if $v=v_1e_1+v_2e_2,$ where $v_1, v_2$ are both non-zero, then the 
function $\nu_v(t)=||tv_1e_1+t^{-1}v_2e_2||=(t^2|v_1|^2+t^{-2}|v_2|^2)^{1/2}$ attains its minimum at some $t_0>0$.    

It is obvious that if $T_i=\bc^*$, the structure group 
of $L_i$, then the action of $T_i$ on $L_i=\bar{L}_i\setminus X_i$ is standard.

Let $\lambda\in Lie(T)=\bc^N$.  
There exists a unique Lie group homomorphism $\alpha_\lambda: \bc\lr T$ defined as $z\mapsto\exp(z\lambda)$.  When $\lambda $ is clear from the context, we write $\alpha$ to mean $\alpha_\lambda$. We denote by $\alpha_{\lambda,i}$ (or more briefly $\alpha_i$) the composition $\bc\stackrel{\alpha_\lambda}{\lr} T\stackrel{pr_i}{\lr}T_i, i=1,2$. 
 
We recall the definition of weak hyperbolicity \cite{ln}.
Let $\lambda=(\lambda_1,\dots,\lambda_N)$, $N=n_1+n_2$. One says that $\lambda$ satisfies the {\em weak hyperbolicity condition of type} $(n_1,n_2)$ 
 if   
 \[0\leq \arg(\lambda_i)<\arg(\lambda_j)<\pi,~1\leq i\leq n_1<j\leq N\eqno(1)\]

If $\lambda_j=1~\forall j\leq n_1, \lambda_j=\tau ~\forall j>n_1$, with $\im(\tau)>0$, we say that $\lambda$ is of {\it scalar type}. 

We denote by $C_i$  the cone $\{\sum r_j\lambda_j\in \bc
\mid r_j\geq 0, n_{i-1}+1\leq j\leq n_{i-1}+n_i\}$ where $n_0=0$. 
We shall denote $C_i\setminus\{0\}$ by $C_i^\circ$ and refer to it as  the {\it deleted cone}.
Weak hyperbolicity is equivalent to the requirement that the 
cones $C_1, C_2$ meet only at the origin and are contained in the half-space 
$\{z\in \bc\mid \im(z)>0\}\cup \br_{\geq 0}$.

\begin{definition}\label{diagonaltype}
Suppose that the $T_i=(\bc^*)^{n_i}$-action on $L_i$ is $d_i$-standard for some $d_i\geq 1$, 
$i=1,2$, and let $\lambda\in \bc^N=Lie(T)$.  The analytic homomorphism $\alpha_\lambda:\bc\lr T=(\bc^*)^N$ defined as $\alpha_\lambda(z)=\exp(z\lambda)$ is   
said to be {\em admissible} if $\lambda$ satisfies the 
weak hyperbolicity condition (1) of type $(n_1,n_2)$ above.  We denote the image of $\alpha_\lambda$ by $\bc_\lambda$. If $\alpha_\lambda$ is 
admissible, we say that the $\bc_\lambda$-action on $L$ 
is  of {\em diagonal type.} If $\lambda$ is of scalar type, we say that 
$\bc_\lambda$-action is of {\em scalar type}. 
\end{definition}

The weak hyperbolicity condition implies 
that $(\lambda_1,\dots, \lambda_N)\in \bc^N$ belongs to the Poincar\'e domain \cite{arnold}, (that is, $0$ is not in the convex hull of $\lambda_1,\ldots, \lambda_N\in \bc$,) and that $\alpha_\lambda$ is a proper holomorphic imbedding.  Thus $\bc_\lambda\cong \bc$. When there 
is no risk of confusion, we merely write $\bc$ to 
mean $\bc_\lambda$.

Note that if $\lambda$ is of scalar type, then the action of $\bc_\lambda$ leads to a scalar type 
complex structure on the orbit space $L/\bc_\lambda=S(L)$.   Moreover, if $d_1=d_2=1$, then $L/\bc_\lambda=
S_{\lambda_{n_1+1}}(L)$.

\begin{lemma} \label{norm}
Suppose that $L_1\lr X_1$ is a $T_1$-equivariant   
principal $\bc^*$-bundle such that the $T_1$-action 
is $d$-standard.  Then:\\
(i)  One has $||z\epsilon_j.v||\leq ||v||$ for $0<|z|<1$ where equality holds if  and only if $\br \epsilon_j$ is contained in the isotropy at $v$.  \\
(ii)  For any $t=(t_1,\dots,t_{n_1})\in T_1$, one has 
\[|t_{k_0}|^{d}.||v||\leq ||t.v||\leq |t_{j_0}|^{d}.||v||, \forall v\in L_1, 
\eqno(2)\]
where $j_0\leq n_1$ (resp. $k_0\leq n_1$) is  such that $|t_{j_0}|\geq |t_j|$ (resp. $|t_{k_0}|\leq |t_j|$) for all  $1\leq j\leq n_1$. Also 
$||t.v||=|t_{j_0}|^{d}.||v||$  if and only if $|t_j|=|t_{j_0}|$ for all $j$ such that $(t_j/t_{j_0})\epsilon_j.v\neq v$ and  $||t.v||=|t_{k_0}|^{d}.||v||$ if and only if $ |t_j|=|t_{k_0}|$ for all $j$ such that $(t_j/t_{k_0})\epsilon_j.v\neq v$.  \\
\end{lemma}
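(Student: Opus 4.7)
The core idea is to reduce everything to the real one-parameter action governed by Definition \ref{standard}(ii), exploiting the $K_1$-invariance of the Hermitian metric. For (i), I would write $z = re^{i\theta}$ with $r = |z| \in (0,1)$, factor $z\epsilon_j = (r\epsilon_j)(e^{i\theta}\epsilon_j)$, and use that $e^{i\theta}\epsilon_j \in K_1$ acts by isometry to get $\|z\epsilon_j.v\| = \nu_{v,j}(r)$. The standard-action hypothesis forces $\nu_{v,j}$ to be either strictly increasing or constant, and in the constant case $\br_+\epsilon_j$ lies in the isotropy at $v$. Since $\nu_{v,j}(1) = \|v\|$, the inequality $\nu_{v,j}(r) \leq \|v\|$ and its equality clause follow immediately.

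For (ii), I would decompose $t$ by factoring out the largest coordinate: set $s = t_{j_0}$, $u_j = t_j/s$ so that $|u_j|\leq 1$ and, using that $T_1$ is abelian, write $t = \Delta(s)\cdot\prod_{j} u_j\epsilon_j$, where $\Delta(s) = (s,\dots,s)$. By Definition \ref{standard}(i), $\Delta(s)$ acts on $L_1$ as the structure-group element $s^d$, hence $\|\Delta(s).w\| = |s|^d\|w\|$ for every $w \in L_1$. Iterating part (i) of the lemma across the factors $u_j\epsilon_j$ yields $\|\prod_j u_j\epsilon_j.v\| \leq \|v\|$, giving the upper bound $\|t.v\|\leq |t_{j_0}|^d\|v\|$. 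The lower bound is obtained symmetrically by factoring out $t_{k_0}$, so that $|u_j|\geq 1$, and applying (i) with $z$ replaced by $z^{-1}$ (where $|z^{-1}|<1$), which converts the monotonicity into the reverse inequality.

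The delicate step is the equality characterization in (ii). Tracing the iteration, norm equality at the $j$-th stage requires either $|u_j|=1$ or $\br_+\epsilon_j \subset$ isotropy at the intermediate point $w_j := u_{j+1}\epsilon_{j+1}\cdots u_{n_1}\epsilon_{n_1}.v$. A key simplification is that, since $T_1$ is abelian, the set $J(v) := \{j : \br_+\epsilon_j \subset \mathrm{isotropy~at~}v\}$ is invariant along the $T_1$-orbit, so $J(w_j)=J(v)$ and the condition depends only on $v$. To recover the stated form, "$|t_j|=|t_{j_0}|$ whenever $(t_j/t_{j_0})\epsilon_j.v \neq v$," I would use the monotonicity dichotomy in reverse: if $|u_j|<1$ and $u_j\epsilon_j.v = v$, then $\nu_{v,j}$ attains the same value at $|u_j|$ and at $1$, forcing $\nu_{v,j}$ to be constant, whence $j\in J(v)$; the converse direction is routine. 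The main obstacle I anticipate is precisely this bookkeeping, ensuring the equality case translates cleanly across the $n_1$-fold iteration into a condition tested only against $v$ rather than against each $w_j$.
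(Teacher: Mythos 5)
Your proof is correct and is essentially the paper's own argument: part (i) by reducing to the real ray via $K_1$-invariance and the monotonicity in Definition \ref{standard}(ii), and part (ii) by factoring $t=\delta(t_{j_0})\cdot\prod_j(t_j/t_{j_0})\epsilon_j$ (resp.\ factoring out $t_{k_0}$), using $d$-standardness of the diagonal and iterating (i) -- indeed you are more careful than the paper about the equality bookkeeping across the intermediate points. The one step you call ``routine'' that deserves a line is that $\br_+\epsilon_j$ lying in the isotropy at $v$ forces $(t_j/t_{j_0})\epsilon_j.v=v$ despite the phase of $t_j/t_{j_0}$ (the isotropy of a holomorphic $\bc^*$-action is a closed complex subgroup of $\bc^*$, hence is all of $\bc^*$ once it contains $\br_+$), which is exactly what converts your condition ``$j\in J(v)$'' into the condition stated in the lemma.
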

\begin{proof}
(i)  Suppose that $\br_+\epsilon_j$ is not contained in the isotropy at $v$.  Since $K_1$ preserves the norm, we may assume that $z\in \br_+$.  In view of \ref{standard}(ii),  $\nu_{v,j}$ is strictly increasing. Hence $||z\epsilon_j.v||<||v||$ for $0<z<1$. 

(ii) Write $s=(s_1,\dots,s_{n_1})$ where $s_j=t_j/t_{j_0}~\forall j$.   
Denoting the diagonal imbedding $\bc^*\lr T_1$ by $\delta$, we have $t=\delta(t_{j_0})s$. Now  
$\delta(t_{j_0}).v=t_{j_0}^{d}v$  in view of \ref{standard} (i). 

By repeated application of (i) above, we see 
that $||t.v||=||s(\delta(t_{j_0}) v)||=||s. t_{j_0}^{d}v||
\leq |t_{j_0}|^{d}.||v||$ where the inequality is strict unless $|t_j|=|t_{j_0}|$ for all $j$ such that $s_j\epsilon_j.v\neq v$.  A similar proof establishes 
the inequality $||t.v||\geq |t_{k_0}|^{d}.||v||$ as well as the condition for equality to hold.
\end{proof}

As an immediate corollary, we obtain

\begin{proposition}\label{free}
 Any admissible $\bc_\lambda$-action of diagonal type on $L_1\times L_2$ is free. 
\end{proposition}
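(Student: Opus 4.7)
\medskip
\noindent\textbf{Proof plan.} I would argue by contradiction. Assume $\alpha_\lambda(z).(v_1,v_2)=(v_1,v_2)$ for some $z\neq 0$ and some $(v_1,v_2)\in L_1\times L_2$; the goal is to derive an angular constraint on $z$ which is incompatible with weak hyperbolicity.

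First, I would convert the fixed-point equation into sign conditions on the reals $\re(z\lambda_j)$. Writing $t_j=\exp(z\lambda_j)$ so that $|t_j|=\exp\re(z\lambda_j)$, the hypothesis gives $\|\alpha_i(z).v_i\|=\|v_i\|$ for $i=1,2$. Since $v_i\in L_i=\bar{L}_i\setminus X_i$ is nonzero, Lemma~\ref{norm}(ii) applied to each factor yields
\[
\min_{j\in J_i}\re(z\lambda_j)\;\le\;0\;\le\;\max_{j\in J_i}\re(z\lambda_j),\qquad i=1,2,
\]
where $J_1=\{1,\dots,n_1\}$ and $J_2=\{n_1+1,\dots,N\}$. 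Equivalently, the $\br$-linear functional $H_z\colon w\mapsto\re(zw)$ takes values of both signs (non-strictly) on each of the deleted cones $C_1^\circ,C_2^\circ$.

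The remaining step is geometric. Since $z\neq 0$, $\ell_z:=\ker H_z$ is a line through the origin whose two antipodal rays have arguments differing by $\pi$, exactly one of which lies in $[0,\pi)$. Each cone $C_i$ has angular sector $[\theta_{\min}^{(i)},\theta_{\max}^{(i)}]\subset[0,\pi)$ of width strictly less than $\pi$, and on such a pointed convex cone the functional $H_z$ takes values of both signs precisely when some ray of $\ell_z$ lies in the closed sector---a short intermediate-value argument applied to a sign-changing chord joining two generators, combined with inspection of the all-zero case. Weak hyperbolicity also gives the strict separation $\theta_{\max}^{(1)}<\theta_{\min}^{(2)}$, so the two closed sectors are disjoint sub-arcs of $[0,\pi)$. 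The unique ray of $\ell_z$ lying in $[0,\pi)$ would then have to sit inside two disjoint intervals simultaneously, which is impossible; hence $z=0$.

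The main obstacle is nothing more than that small geometric lemma, which requires a case split according as $H_z$ takes strictly opposite signs on $C_i$, vanishes on only a boundary generator, or vanishes on all of $C_i$. All three possibilities collapse to ``some ray of $\ell_z$ is in the closed angular sector of $C_i$'' by pointedness and convexity, and none presents substantive difficulty once the framework above is in place.
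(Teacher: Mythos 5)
Your argument is correct, and it reaches the conclusion by a route that differs from the paper's in a meaningful way, even though both rest on the same two ingredients: the norm estimate of Lemma~\ref{norm}(ii) and the geometric content of weak hyperbolicity (the deleted cones $C_1^\circ$, $C_2^\circ$ are disjoint and lie in $\{\im(w)>0\}\cup\br_{\geq 0}$). The paper argues in the opposite direction: for $z\neq 0$ it first observes that one of the rotated cones $zC_i^\circ$ lies entirely in an open half-plane $\re(w)<0$ or $\re(w)>0$, so that the corresponding block of moduli $|\exp(z\lambda_j)|$ is uniformly $<1$ (or $>1$), and then deduces the \emph{strict} inequality $\|q_i\|<\|p_i\|$; but strictness in Lemma~\ref{norm} is conditional on isotropy, so the paper must insert an extra step ruling out that every $\exp(z\lambda_j)\epsilon_j$ fixes $p_i$ (the compact-orbit argument using condition (i) of Definition~\ref{standard}). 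Your proof sidesteps that entirely: you only invoke the unconditional two-sided bound (2), so norm preservation yields $\min_{j\in J_i}\re(z\lambda_j)\leq 0\leq\max_{j\in J_i}\re(z\lambda_j)$ for both blocks, and the contradiction is pushed into an elementary planar statement, namely that the line $\ker(w\mapsto\re(zw))$ would have to meet both deleted cones, which is incompatible with the strict angular separation $\arg(\lambda_i)<\arg(\lambda_j)$ ($i\leq n_1<j$); your case analysis (strict sign change via the intermediate value theorem on a chord avoiding the origin, a vanishing generator, or $H_z\equiv 0$ on the cone) is complete, since the segment between two generators cannot pass through $0$ when all arguments lie in $[0,\pi)$. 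What each approach buys: yours is more economical and self-contained (no isotropy/compactness digression), while the paper's half-plane formulation gives the stronger dynamical statement that one of the two norms strictly decreases (or increases) under any $z\neq 0$, which is the same mechanism reused in Lemma~\ref{proper} and in the boundedness/continuity argument in the proof of Theorem~\ref{cbundle}.
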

\begin{proof}.
Suppose that $z\in \bc, z\neq 0,  (p_1,p_2)\in L.$ 
Let $\alpha_\lambda(z).(p_1,p_2)=(q_1,q_2)$. 
It is readily seen that one of the deleted cones $zC_1^\circ, zC_2^\circ$ lies entirely in the left-half space 
$\mathcal{R}_-:=\{z\in \bc\mid \re(z)<0\}$ or the 
right-half space $\mathcal{R}_+:=\{z\in \bc\mid \re(z)>0\}$. 
Consider the case $zC_1^\circ\subset \mathcal{R}_-$.  Then $|\exp(z\lambda_j)|  <1$ for all $j\leq n_1$.  We claim that there is some $j$ such that $\exp(z\lambda_j)\epsilon_j.p_1\neq p_1$, for, otherwise, the action of $T_1$-action, restricted to the 
orbit through $p_1$ factors through 
the compact group $T_1/\langle \exp(z\lambda_j)\epsilon_j, 1\leq j\leq n_1\rangle\cong (\bs^1)^{2n_1}$.  This implies that the $T_1$-orbit of $p_1$ is compact, contradicting \ref{standard}  (i).  
Now it follows from Lemma \ref{norm} that  $||q_1||=||(\prod_{1\leq j\leq n_1}\exp(z\lambda_j)\epsilon_j).p_1||<||p_1||$.  Thus $q_1\neq p_1$ in this case.  Similarly, we see that $(p_1,p_2)\neq (q_1,q_2)$ in the other cases also, showing that the $\bc$-action on $L$ is free. \end{proof}

\begin{example}\label{std}{\em (i) Let $T_i=\bc^*$ be the structure group of $L_i\lr X_i$ so that the $T_i$-action on $L_i$ is standard, $i=1,2$. If $\tau\in \bc^*$ is such that $0<\arg(\tau)<\pi$, then the imbedding  $\alpha(z)=(\exp(z),\exp(\tau z))\in\bc^*\times \bc^* $ is admissible.\\
(ii) Suppose that $T_1$ action on $L_1$ is $d$-standard  and that $X_1'\subset X_1$ is a $T_1$-stable 
complex analytic submanifold.  Then the $T_1$-action 
on $L_1|X_1'$ is again $d$-standard. 
More generally, 
suppose $X_1'$ is any compact complex manifold with a holomorphic $T_1$-action and that $\bar{L}'_1\lr X_1'$ is the pull-back of $\bar{L}_1\lr X_1$ via a $T_1$-equivariant  holomorphic map $f:X_1'\lr X_1$.  The hermitian metric on $\bar{L}_1$ induces a hermitian metric on $\bar{L}_1'$.  Then the 
$T_1$-action on $L_1'$ is standard.\\
(iii) Let $A=(a_{ij})$ be an $n\times n_1$ matrix---the matrix of exponents---where $a_{ij}\in\bz$.  Then $T_1:=(\bc^*)^{n_1}$ acts linearly on $\bc^n$ where $t\epsilon_j.(z_1,\ldots,z_n)=(t^{a_{1j}}z_1,\ldots,
t^{a_{n,j}}z_n), t\in \bc^*, 1\leq j\leq n_1$.  This action makes the tautological line bundle $\bar{L}_1\lr \bp^{n-1}$ a $T_1$-equivariant bundle.  The action is almost  effective if $A$ has rank equals $n_1$. Condition (i) of 
Definition \ref{standard} is satisfied if $A$ has positive constant row sums, that is, $d:=\sum_{j}a_{ij}$ is  independent of $i$ and is positive. 
Condition (ii) is satisfied if $a_{i,j}\geq 0,$ for all $1\leq i\leq n,1\leq j\leq n_1.$  Thus we obtain 
a $d$-standard $T_1$-action on $L_1$ when the matrix $A$ satisfied both these conditions where $d:=\sum_ja_{1,j}$.  \\ 
(iv) Consider the 
linear representation of $T_1\cong(\bc^*)^{n_1}$ on $\bc^{n}$ obtained 
from a matrix of exponents $A$ of rank $n_1$, having positive integral  entries and constant row sums as in (iii) above.  
This induces a linear action of $T_1$ on $\Lambda^k(\bc^n)\cong \bc^{{n\choose k}}$ for $k<n$. 
Denote by $G_k(\bc^n)$ the Grassmann variety 
of $k$ dimensional vector subspaces of $\bc^n$. 
The standard $T_1$-action on 
$L_1=\Lambda^k(\bc^n)\setminus\{0\}$  where $\bar{L}_1$ is the tautological line bundle over $\bp(\Lambda^k(\bc^n))$ restricts to a standard $T_1$-action on the 
$L_1|G_k(\bc^n)$ via the Pl\"ucker imbedding  
$G_k(\bc^n)\hookrightarrow \bp(\Lambda^k(\bc^n)).$   Note that 
$\bar{L}_1|G_k(\bc^n)$ is a negative ample line bundle 
over $G_k(\bc^n)$ which generates $Pic(G_k(\bc^n))\cong \bz$.
}
\end{example}

\begin{lemma}\label{proper} 
The orbits of an admissible $\bc_\lambda$-action on $L$ are 
closed and properly imbedded in $L$. 
\end{lemma}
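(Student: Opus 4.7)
The orbit through $(p_1,p_2)\in L$ is the image of the holomorphic orbit map $\phi\colon\bc\lr L$, $z\mapsto\alpha_\lambda(z).(p_1,p_2)$. By Proposition \ref{free} this map is injective, and as the orbit map of a free holomorphic Lie group action it is an immersion. The plan is to show $\phi$ is proper; this yields simultaneously that the orbit is closed in $L$ and that $\phi$ is a proper holomorphic embedding. Accordingly, I would take a sequence $\{z_n\}\subset\bc$ with $|z_n|\to\infty$, pass to a subsequence with $z_n/|z_n|\to e^{i\theta}$ for some $\theta\in\br$, and aim to show that $\phi(z_n)$ has no convergent subsequence in $L$.

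The crux is a uniform strengthening of the observation used in the proof of Proposition \ref{free}: for every $\theta\in\br$, at least one of the rotated deleted cones $e^{i\theta}C_1^\circ$, $e^{i\theta}C_2^\circ$ lies strictly inside one of the open half-planes $\mathcal{R}^+$, $\mathcal{R}^-$. Setting $a_1=\min_{j\le n_1}\arg\lambda_j$, $b_1=\max_{j\le n_1}\arg\lambda_j$, and analogously $a_2,b_2$ for $n_1<j\le N$, the failure of $e^{i\theta}C_i^\circ$ to lie in either open half-plane is equivalent to the closed interval $[\theta+a_i,\theta+b_i]$ meeting $\tfrac{\pi}{2}+\pi\bz$. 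If both cones failed, there would exist $x_i\in[a_i,b_i]$ with $\theta+x_i\in\tfrac{\pi}{2}+\pi\bz$, hence $x_2-x_1\in\pi\bz$. But weak hyperbolicity gives $0\le a_1\le b_1<a_2\le b_2<\pi$, so $0<x_2-x_1<\pi$, a contradiction.

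Once the dichotomy is established, suppose without loss that $e^{i\theta}C_1^\circ\subset\mathcal{R}^+$. Compactness of $[\theta+a_1,\theta+b_1]$ and continuity of $\cos$ give a constant $c>0$ with $\cos(\theta+\arg\lambda_j)\ge c$ for every $j\le n_1$. Since $\arg z_n\to\theta$, for all large $n$ and $j\le n_1$ one has $\re(z_n\lambda_j)\ge(c/2)|\lambda_j||z_n|\to\infty$; the lower bound in Lemma \ref{norm}(ii) applied to $\alpha_{\lambda,1}(z_n)\in T_1$ then forces $\|\alpha_{\lambda,1}(z_n).p_1\|\to\infty$, so $\phi(z_n)$ leaves every compact subset of $L$. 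If instead $e^{i\theta}C_1^\circ\subset\mathcal{R}^-$, the upper bound in Lemma \ref{norm}(ii) forces $\|\alpha_{\lambda,1}(z_n).p_1\|\to 0$, so the first coordinate of $\phi(z_n)$ approaches the zero section $X_1\subset\bar{L}_1$ and again exits $L_1=\bar{L}_1\setminus X_1$. The cases involving $C_2$ are symmetric.

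I expect the main obstacle to be the uniform cone dichotomy: the pointwise version implicit in the proof of Proposition \ref{free} is not sufficient for properness, and one needs the strict angular gap $b_1<a_2$ (so that the total width $b_2-a_1$ is less than $\pi$) to preclude both cones straddling a zero of $\cos$ simultaneously. After that, Lemma \ref{norm}(ii) handles the norm analysis mechanically.
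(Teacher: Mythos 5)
Your proof is correct and follows essentially the same route as the paper's: pass to a subsequence with $z_n/|z_n|$ converging to a direction $z_0=e^{i\theta}$, use weak hyperbolicity to place one rotated deleted cone $z_0C_i^\circ$ strictly inside an open half-plane (the paper asserts this via containment in a sector $\mathcal{S}_\pm(\theta)$, you spell out the argument-interval proof), and then apply Lemma \ref{norm}(ii) to force $\|\alpha_i(z_n).p_i\|\to 0$ or $\infty$, so the orbit has no limit points in $L$. Your added details (the explicit dichotomy argument and the uniform bound $\cos\ge c$) only make explicit what the paper leaves implicit.
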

\begin{proof}
Let $p=(p_1,p_2)\in L$.  Let $(z_n)$ be any sequence of 
complex numbers such that $|z_n|\to \infty$.  We shall 
show that $\alpha_\lambda(z_n).p$ has no limit points in $L$. 
Without loss of generality, we may assume that the $z_n$ 
are such that $z_n/|z_n|$ have a limit point $z_0\in \bs^1$. 
By the weak hyperbolicity condition (1), one of the deleted cones 
$z_0C^\circ_i$ is contained in one of the sectors  
$\mathcal{S}_+(\theta):=\{w\in\bc\mid -\theta<\arg(w)<\theta\}\subset \mathcal{R}_+$ or $\mathcal{S}_-(\theta)=-\mathcal{S}_+(\theta)\subset \mathcal{R}_-$ for 
some $\theta$, $0<\theta<\pi/2$.  Say $z_0C^0_i\subset \mathcal{S}_-(\theta)$. Then 
$z_nC^\circ_i\subset \mathcal{S}_-(\theta)$ for all $n$ sufficiently large. 
It follows that $|\exp(z_n\lambda_j)|\to 0$ as $n\to \infty $ for $n_{i-1}<j\leq n_{i-1}+n_i$  (where $n_0=0$). By Lemma \ref{norm} 
we conclude that the sequence $(\alpha_{i}(z_n)(p_i))$ 
does not have a limit in $L_i.$ \end{proof}

\begin{definition}\label{vectorfields}
Given standard  $T_i=(\bc^*)^{n_i}$-actions on the $L_i$, $i=1,2,$  we obtain holomorphic vector fields $v_1,\dots,v_{N}$ on $L=L_1\times L_2$ as follows.  Let $p=(p_1,p_2)\in L.$  Suppose that $1\leq j\leq n_1$. The holomorphic map $\mu_{p_1}:T_1\lr L_1$, $s\mapsto s.p_1,$ induces $d\mu_{p_1}:Lie(T_1)=\bc^{n_1}\lr \mathcal{T}_{p_1}L_1$.  Set  $v_j(p):=(d\mu_p(e_j),0)\in \mathcal{T}_{p_1}L_1\times \mathcal{T}_{p_2}L_2=\mathcal{T}_pL$.  The vector fields 
$v_j, n_1<j\leq N,$ are defined similarly. 
The vector fields $v_j, 1\leq j\leq N,$  are referred to as {\em fundamental vector fields} on $L$. 
\end{definition}

\begin{remark}\label{positivity}{\em
Let $1\leq j\leq n_1$. Consider the differential $d\nu_1:\mathcal{T}_pL\lr \br$  of the norm map $\nu_1:L\lr \br_+$ defined as $q=(q_1,q_2)\mapsto ||q_1||$.   
It is readily verified that, if $ \br_+\epsilon_j$ is not contained in the isotropy at $p_1$, then by standardness of the action,  $d\nu_1(v_j(p))=v_j(p)(\nu_1)=\nu_{j,p_1}'(1)>0.$ 
(Here $\nu_{j,p_1}$ is as in the definition \ref{standard}(ii) of standard action.)
On the other hand, since $\nu_1(s.p)=\nu_1(p)$ for all 
$s\in (\bs^1)^{n_1}=\exp (\sqrt{-1}\br^{n_1})\subset  T_1$ we obtain that
$d\nu_1(\sqrt{-1}v_j(p))=0$.  Thus, for any 
$z\in \bc$, we obtain that $d\nu_1(zv_j(p))=\re(z)\nu_{j,p_1}'(1) $. An entirely analogous statement holds when $n_1<j\leq N$.
}
\end{remark}

Assume that  $\lambda\in \bc^N$ 
yields an admissible imbedding $\alpha\colon\!\bc \lr T$, $\alpha(z)=\exp(z\lambda)$.  We obtain a holomorphic vector field $v_\lambda$ on $L$ where 
\[v_\lambda(p)=\sum_{1\leq j\leq N} \lambda_j v_j(p)\in \mathcal{T}_{p}L.\]

The flow of the vector field $v_\lambda$ yields a holomorphic action of $\bc$ which is just the  restriction 
of the $T$-action to $\bc_\lambda$. 
This $\bc$-action on $L$ is free and the $\bc$-orbits are the same as the leaves of the holomorphic 
foliation defined by the integral curves of the vector field $v_\lambda$.  By Lemma \ref{proper} each leaf is biholomorphic to $\bc$.  It turns out that the leaf space $L/\bc$ is a Haudorff complex analytic manifold and  the projection $L\lr L/\bc_\lambda$ is the projection of a holomorphic  principal bundle with fibre and structure group the additive group $\bc$.    
The underlying differentiable manifold of the leaf space is diffeomorphic to $S(L)=S(L_1)\times S(L_2)$.  
These statements will be 
proved in Theorem \ref{cbundle} below.  We shall denote 
the complex manifold $L/\bc_\lambda$ by $S_\lambda(L).$  The complex structure so obtained on $S(L)$ is referred to as {\it diagonal type}.  

We shall denote by $D(\bar{L})\subset \bar{L}=\bar{L}_1\times \bar{L}_2$ the 
product of the unit disk bundles $D(\bar{L}_i)=\{p\in \bar{L}_i \mid ||p||\leq 1\}\subset \bar{L}_i, i=1,2$.   Also we 
denote by  $\Sigma(\bar{L})\subset \bar{L} $ 
the boundary of $D(\bar{L})$. 
Thus $\Sigma(\bar{L})=
D(\bar{L}_1)\times S(L_2)\cup S(L_1)
\times D(\bar{L}_2).$  Observe that $S(L)=D(\bar{L}_1)\times S(L_2)\cap S(L_1)
\times D(\bar{L}_2)\subset \Sigma(\bar{L})$.

\begin{theorem}\label{cbundle}
With the above notations, suppose that $\alpha_\lambda:\bc\lr T$ 
defines an admissible action of $\bc$ of diagonal type on $L$. Then $L/\bc$ is a (Hausdorff) complex analytic 
manifold and the quotient map $L\lr L/\bc$ is the 
projection of a holomorphic principal $\bc$-bundle.  
Furthermore, 
each $\bc$-orbit meets $S(L)$ transversely at a unique point so that $L/\bc$ is diffeomorphic to $S(L)$. 
\end{theorem}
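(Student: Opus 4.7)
The plan is to exhibit $S(L)$ as a smooth global cross-section for the $\bc_\lambda$-action. For each $p=(p_1,p_2)\in L$ define
\[
\Phi_p\colon \bc\lr \br^2,\qquad z\mapsto\bigl(\log \|\alpha_1(z).p_1\|,\ \log\|\alpha_2(z).p_2\|\bigr).
\]
The unique zero $z(p)$ of $\Phi_p$ will pick out the unique intersection of the orbit $\bc_\lambda.p$ with $S(L)$, so the main step is to show $\Phi_p$ is a diffeomorphism from $\bc\cong\br^2$ onto $\br^2$.

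For non-degeneracy of the Jacobian, write $z=x+iy$, set $q=\alpha_\lambda(z).p$, and note that the $\br$- and $i\br$-flows of the holomorphic vector field $v_\lambda$ are $x\mapsto\alpha_\lambda(x)$ and $y\mapsto\alpha_\lambda(iy)$, so the partials of $F_i(z):=\log\|\alpha_i(z).p_i\|$ equal $d\nu_i(v_\lambda(q))/\nu_i(q)$ and $d\nu_i(Jv_\lambda(q))/\nu_i(q)$, where $\nu_i(q)=\|q_i\|$ and $J$ is the complex structure. Using Remark \ref{positivity} together with the observation that $v_j$ contributes to $d\nu_i$ only when $j$ belongs to the $i$-th block, a direct calculation yields $\nabla F_i(z)=\overline{\xi_i(q)}/\nu_i(q)$, where $\xi_i(q)=\sum \mu_{j,q_i}\lambda_j\in C_i$ with $\mu_{j,q_i}:=\nu'_{j,q_i}(1)\geq 0$, summed over the $i$-th block. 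I claim $\xi_i(q)\ne 0$: if every $\mu_{j,q_i}$ vanished then by Definition \ref{standard}(ii) each $\br_+\epsilon_j$ would lie in the isotropy at $q_i$, hence so would $\br_+\subset\Delta$; but Definition \ref{standard}(i) forces $\Delta$ to act via a $d$-fold cover of the free structure $\bc^*$-action, a contradiction. Thus $\nabla F_1,\nabla F_2$ are non-zero elements of the conjugate cones $\overline{C_1},\overline{C_2}$, and weak hyperbolicity (which gives $\max_{i\leq n_1}\arg\lambda_i<\min_{j>n_1}\arg\lambda_j$) ensures no argument in $[-\theta_0,0]$ agrees, modulo $\pi$, with any argument in $(-\pi,-\theta_1)$; hence they are $\br$-linearly independent and $\Phi_p$ is everywhere a local diffeomorphism.

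Properness of $\Phi_p$ is essentially Lemma \ref{proper}: for any sequence $|z_n|\to\infty$, passing to a subsequence with $z_n/|z_n|\to z_0$, one of the deleted cones $z_0C_i^\circ$ lies in $\mathcal{R}_-$, so $\|\alpha_i(z_n).p_i\|\to 0$ and $F_i(z_n)\to-\infty$. A proper local diffeomorphism $\br^2\to\br^2$ is a covering and hence, by simple connectedness, a diffeomorphism. Consequently $z(p)$ is unique, and the Jacobian non-degeneracy at $z(p)$ is precisely the transversality of the orbit $\bc_\lambda.p$ to $S(L)$.

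The map $\sigma\colon L\to S(L)$, $p\mapsto \alpha_\lambda(z(p)).p$, is smooth by the implicit function theorem applied to $\Phi_p$, and its fibres are the $\bc_\lambda$-orbits, so $L/\bc_\lambda$ is Hausdorff and $\sigma$ induces a diffeomorphism $L/\bc_\lambda\cong S(L)$. For the holomorphic structure: since $v_\lambda$ is nowhere vanishing (by Proposition \ref{free}), in holomorphic coordinates about any $p\in L$ one can straighten the flow and extract a codimension-one holomorphic transversal $\Sigma$ which maps biholomorphically onto an open subset of $L/\bc_\lambda$; transition functions between such transversals are realized by the holomorphic $\bc$-action, hence are holomorphic, giving $L/\bc_\lambda$ the structure of a complex manifold. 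Shrinking appropriately produces local holomorphic trivialisations $\Sigma\times\bc\cong \bc.\Sigma$ of $L\to L/\bc_\lambda$, exhibiting it as a holomorphic principal $\bc$-bundle. The chief obstacle is the Jacobian computation together with the argument-separation step, where the standardness hypothesis and weak hyperbolicity must combine to force the two gradients into disjoint sectors, for it is here that the geometric content of both assumptions is really used.
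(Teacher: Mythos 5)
Your proof is correct, but it takes a genuinely different route from the paper's. The paper splits the theorem into Lemma \ref{quotient} (each orbit meets $S(L)$ in exactly one point, proved in three steps: a sign/cone argument for uniqueness, then path-connectedness of $\bc p\cap \Sigma(\bar{L})$ plus an intermediate-value argument for existence) and Lemma \ref{transverse} (transversality, via Remark \ref{positivity} and the cone geometry of weak hyperbolicity), and then gets continuity of the retraction $\pi_\lambda$ by a separate sequential argument before invoking the analytic quotient machinery. You compress all of this into the single claim that $\Phi_p=(\log\|\alpha_1(\cdot).p_1\|,\log\|\alpha_2(\cdot).p_2\|)$ is a proper local diffeomorphism $\bc\lr\br^2$, hence a covering of the simply connected plane and hence a global diffeomorphism: injectivity gives uniqueness, surjectivity gives existence, nondegeneracy of the Jacobian at the unique zero is exactly transversality, and the implicit function theorem gives smooth dependence of the intersection point, which the paper obtains only as continuity. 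The inputs are the same ones the paper uses --- Definition \ref{standard}(i)--(ii) via Lemma \ref{norm} and Remark \ref{positivity}, the cone separation coming from weak hyperbolicity (your linear-independence of the conjugate gradients is the argument of Lemma \ref{transverse} applied at every point of the orbit rather than only on $S(L)$, and your non-vanishing of $\xi_i(q)$ is the same compact-orbit contradiction as in Proposition \ref{free}), and the escape estimate of Lemma \ref{proper} for properness. What your route buys is a more unified and quantitative argument that dispenses with the auxiliary hypersurface $\Sigma(\bar{L})$ and the path construction of Steps 2--3 of Lemma \ref{quotient}, at the cost of invoking the covering-space fact for proper local homeomorphisms; the paper's route is more elementary point-set topology and isolates Lemmas \ref{quotient} and \ref{transverse} as separate statements that it reuses later (e.g. in Theorem \ref{lineartype}). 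Two small slips, neither a gap: in the properness step you only treat the case $z_0C_i^\circ\subset\mathcal{R}^-$ (the case $z_0C_i^\circ\subset\mathcal{R}^+$ gives $F_i\to+\infty$ and properness just the same), and the interval bookkeeping in your argument-separation sentence is muddled --- the clean statement your computation needs is that $\arg\xi_1$ and $\arg\xi_2$ are distinct values in $[0,\pi)$ because $C_1^\circ$ and $C_2^\circ$ have disjoint argument ranges there, so they are not congruent modulo $\pi$, and conjugation preserves $\br$-linear independence.
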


Proof of the above theorem, which is along the same lines 
as the proof of \cite[Theorem 1]{ln} with suitable  modifications to take care the more general setting we are in, will be based on  
the following two lemmata.  

\begin{lemma}\label{quotient}
Each $\bc_\lambda$-orbit in $L$ meets $S(L)$ at exactly  one point. 
\end{lemma}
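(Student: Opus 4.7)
The plan is to reduce the assertion to the bijectivity of a single smooth map $F\colon\bc\to\br^2$. Fix $p=(p_1,p_2)\in L$ and define
\[F(z)=\bigl(\log\|\alpha_1(z).p_1\|,\,\log\|\alpha_2(z).p_2\|\bigr),\qquad z\in\bc.\]
The orbit $\{\alpha_\lambda(z).p\mid z\in\bc\}$ meets $S(L)=S(L_1)\times S(L_2)$ at $\alpha_\lambda(z).p$ precisely when $F(z)=(0,0)$, so the lemma is equivalent to $F^{-1}(0,0)$ being a singleton. I will establish that $F$ is a proper local diffeomorphism; then, since $\bc$ is connected and $\br^2$ is simply connected, $F$ is a diffeomorphism and the conclusion follows.

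For the local diffeomorphism property, write $z=u+iv$ and $q_i=\alpha_i(z).p_i$. Since $\alpha_\lambda$ is holomorphic in $z$, the real derivatives of $\alpha_\lambda(z).p$ along $\partial_u$ and $\partial_v$ are $v_\lambda$ and $Jv_\lambda$ respectively. Applying Remark~\ref{positivity} to each $\lambda_j v_j$ and summing, one obtains
\[\frac{\partial F_i}{\partial u}=\frac{1}{\|q_i\|}\sum_{j\in J_i}\re(\lambda_j)\,\nu'_{j,q_i}(1),\qquad \frac{\partial F_i}{\partial v}=-\frac{1}{\|q_i\|}\sum_{j\in J_i}\im(\lambda_j)\,\nu'_{j,q_i}(1),\]
where $J_1=\{1,\ldots,n_1\}$ and $J_2=\{n_1+1,\ldots,N\}$ (note that only the $j\in J_i$ contribute, as the remaining $v_j$ are tangent to the other factor of $L$). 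Setting $w_i:=\frac{1}{\|q_i\|}\sum_{j\in J_i}\nu'_{j,q_i}(1)\lambda_j\in\bc$, a direct computation identifies the Jacobian as $\det(dF)=\im(w_1\overline{w_2})$. Each $w_i$ is a nonnegative real combination of $\{\lambda_j\}_{j\in J_i}$ and so lies in the cone $C_i$. Moreover $w_i\neq 0$: otherwise every $\nu'_{j,q_i}(1)$ would vanish, placing each $\br_+\epsilon_j$, hence the entire $(\br_+)^{n_i}$, in the isotropy at $q_i$; but by Definition~\ref{standard}(i) its diagonal $\br_+$ acts on the fibre by nontrivial $d$-th power scaling, a contradiction. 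By the weak hyperbolicity condition, the deleted cones $C_1^\circ$ and $C_2^\circ$ lie in disjoint angular sectors of $\{\im z>0\}\cup\br_{\geq 0}$, so $\arg w_1\neq\arg w_2$ in $[0,\pi)$, whence $w_1,w_2$ are $\br$-linearly independent and $\im(w_1\overline{w_2})\neq 0$.

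For properness, the map $(\log\nu_1,\log\nu_2)\colon L\to\br^2$ is a locally trivial fibration with compact fibre $S(L_1)\times S(L_2)$ (using the $\bs^1$-invariant Hermitian metrics to split $L_i\cong S(L_i)\times\br_+$), and is therefore proper; composed with the proper orbit map $\phi_p\colon\bc\to L$, $z\mapsto\alpha_\lambda(z).p$ (Lemma~\ref{proper}), this yields properness of $F$. The chief obstacle is the nonvanishing of the Jacobian, where Remark~\ref{positivity}, the $d$-standardness of the $T_i$-actions, and the weak hyperbolicity of $\lambda$ all have to be woven together; once that is in hand, the rest of the argument is formal.
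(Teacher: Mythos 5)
Your proposal is correct, but it takes a genuinely different route from the paper's. The paper splits the lemma into uniqueness and existence: uniqueness comes from a direct norm argument (if two points of one orbit lay on $S(L)$, standardness and Lemma \ref{norm} would produce positive reals with $a_1\lambda_{i_1}+a_2\lambda_{i_2}=r(b_1\lambda_{j_1}+b_2\lambda_{j_2})$, contradicting weak hyperbolicity), while existence is a connectedness argument: the set $\bc p\cap\Sigma(\bar{L})$ is shown to be path-connected and to contain points on both faces of $\Sigma(\bar{L})$, so any joining path must cross $S(L)$. You instead package both halves into the single map $F=(\log\nu_1,\log\nu_2)$ composed with the orbit map $z\mapsto\alpha_\lambda(z).p$ and show it is a proper local diffeomorphism $\bc\to\br^2$, hence a covering of the simply connected plane, hence a bijection; your Jacobian computation is essentially the paper's transversality computation (Lemma \ref{transverse}) carried out at every point of the orbit rather than only on $S(L)$, and the properness input is exactly Lemma \ref{proper}, so there is no circularity. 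What your route buys: existence, uniqueness, transversality, and even the stronger fact that the orbit meets each level set $\nu_1=c_1,\ \nu_2=c_2$ exactly once, all in one stroke; what the paper's route buys is an entirely elementary argument using only norm estimates and connectedness, with no appeal to the inverse function theorem or covering-space theory. Two points you leave implicit are easily filled and worth stating: the step ``$w_i=0$ forces every $\nu'_{j,q_i}(1)=0$'' uses that the cone $C_i$ is pointed (all the relevant $\lambda_j$ have argument in $[0,\pi)$, so a nonnegative combination vanishes only when all coefficients do), which weak hyperbolicity guarantees; and surjectivity of $F$, needed before invoking simple connectivity of $\br^2$, follows because a proper local homeomorphism has image both open and closed, hence equal to $\br^2$.
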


\begin{proof} {\it Step 1:}
We first show that each orbit meets $S(L)$ at not more than one point.  
Let $p=(p_1,p_2)\in S(L)$.   Suppose that  $0\neq z\in \bc$ is such that  
$ q:=\alpha_\lambda(z).p=\alpha(z).p\in S(L).$  
This means that, writing  $q=(q_1,q_2)$, we have 
\[q_i=\alpha_i(z)(p_i)=
(\prod_{n_{i-1}<j\leq n_{i-1}+n_i} 
\exp(\lambda_jz)\epsilon_j)p_i, i=1,2,\] (where $n_0=0$). Now  
 $||q_i||=||p_i||=1, i=1,2,$ and $p\neq q$.  Since 
the hermitian metric on $L_1$ is invariant under $(\bs^1)^{n_1}$, we  see that $||p_1||=||q_1||=||(\prod_{1\leq j\leq n_1} (\exp (t_j)\epsilon_j))p_1||$ where $t_j=\re(\lambda_jz)$. Standardness of the $T_1$-action implies that either $\re(\lambda_iz)=0$ for all $i\leq n_1$ or there exist  indices $1\leq i_1<i_2\leq n_1$ such that $ \re(z\lambda_{i_1}).\re(z\lambda_{i_2})<0$. In the latter case 
there exist positive reals $a_1, a_2$ such that  $a_1\re(z\lambda_{i_1})+a_2\re(z\lambda_{i_2})=0$.  Similarly, either $\re(z\lambda_j)=0$ 
for all $n_1<j\leq N$ or   
there exist indices $n_1<j_1<j_2\leq N$ and  positive reals  
$b_1,b_2$ such that $b_1\re(z\lambda_{j_1})+b_2\re(z\lambda_{j_2})=0.$  
Suppose   
$\re(a_1\lambda_{i_1}z+a_2\lambda_{i_2}z)=0=\re(b_1\lambda_{j_1}z+b_2\lambda_{j_2}z)$.  This implies that $a_1\lambda_{i_1}+a_2\lambda_{i_2}=r (b_1\lambda_{j_1}+b_2\lambda_{j_2})$ for 
some positive number $r$. This contradicts the weak hyperbolicity condition (1).  Similarly we obtain a contradiction in the remaining cases as well. 

\noindent 
{\it Step 2:}
Next we show that $\bc p\cap \Sigma(\bar{L})$ is path-connected for any $p\in L$.  We shall write $D_-$ and 
$D_+$ to denote the bounded and unbounded 
components of $L\setminus \Sigma(\bar{L}).$

Without loss of generality, suppose that $p=(p_1,p_2)\in \Sigma(\bar{L})$ and let $q=(q_1,q_2)\in \Sigma(\bar{L})\cap \bc p$ be arbitrary.  Say, $q=\alpha(z_1).p$ with $z_1\neq 0$.  Then 
$r\mapsto \alpha(rz_1).p$ defines a path $\sigma:I\lr \bc p$ with 
end points in $\Sigma(\bar{L})$.  
We modify the path $\sigma$ to obtain a new path 
which lies in $\Sigma(\bar{L}).$   For this purpose choose $z_0\in\bc$ ,  $\arg(z_0)>\frac{\pi}{2}$ such that $z_0C^\circ_1\cup z_0C^\circ_2$ is contained in the left-half space $\mathcal{R}_-=\{z\in \bc\mid \re(z)<0\}$ and $(-z_0)C^\circ_1\cup (-z_0)C^\circ_2$ is contained in the right-half  space $\mathcal{R}_+=\{z\in \bc\mid \re(z)>0\}$.  In particular, $\lim_{r\to \infty}|\exp(rz_0\lambda_j)|=0$ and $\lim_{r\to\infty}
|\exp(-rz_0\lambda_j)|=\infty, \forall j\leq N,$ where $r$ varies in $\br_+$.  By (2), we see that for $i=1,2,$ and any $x_i\in L_i$, $||\alpha_i(rz_0).x_i||\to 0$ and 
$||\alpha_i(-rz_0).x_i||\to \infty$ as $r\to +\infty $ in $\br$.

For any $r\in I$, let $\gamma(r)\in \br$ be least (resp. largest) such that $\alpha(\gamma(r)z_0). \sigma(r)\in \Sigma(\bar{L})$ when $\sigma(r)\in D_+$ (resp. $\sigma(r)\in D_-$). Then $\gamma$ is a well-defined continuous 
function of $r$.  Now $r\mapsto \alpha(\gamma(r)z_0+rz_1).p$ is a path in $\bc p\cap \Sigma(\bar{L})$ 
joining $p$ to $q$. 

\noindent
{\it Step 3: }
To complete the proof, we shall show that, for any $p\in L$, 
there exist points $q'=(q'_1,q'_2), q''=(q_1'',q_2'')\in \bc p\cap \Sigma(\bar{L})$ such that $||q'_1||\leq 1, ||q'_2||=1$ and $||q''_1||=1, ||q''_2||\leq 1$. Then any path in $\bc p\cap \Sigma(\bar{L})$ joining $q'$ and $q''$ must contain a point of $S(L)$.  

Choose $w_k\in \bc^*,1\leq k\leq 4,$ such that the deleted cones $w_1C^\circ_i\subset \mathcal{R}_+, w_2C^\circ_i\subset \mathcal{R}_-$, for $i=1,2$, and,  $w_3C^\circ _1,w_4C^\circ _2\subset R_-$, $w_3C^\circ _2,w_4C^\circ _1\subset \mathcal{R}_+$.  
Then $|\exp(rw_k\lambda_j)|\to 0$ (resp. $\infty$) as $r\to +\infty$ ($r\in \br_+$) 
if $\lambda_j\in C^\circ_i$ and $w_kC^\circ_i\subset \mathcal{R}_-$ (resp. $\mathcal{R}_+$). Now $||\alpha_i(rw_1)p_i||>1, ||\alpha_i(rw_2)p_i||<1, i=1,2$ for  
$r\in\br_+$ sufficiently large.  It follows that any path in $\bc p$ 
joining $\alpha(rw_k)(p), k=1,2,$ must meet $\Sigma(\bar{L})$ for some $r=r_0$. Thus we may 
as well assume that $p\in \Sigma(\bar{L})$.  Suppose that $||p_1||=1, ||p_2||<1$.  For $r>0$ sufficiently large, $||\alpha_1(rw_3).p_1||<1$ and $||\alpha_2(rw_3).p_2||>1$.  Therefore there must exist 
an $r_1$ such that setting $q'_i:=\alpha_i(r_1w_3).p_i$, we have 
$||q'_1||\leq 1$ and $||q'_2||=1$.  Then $q'=(q'_1,q'_2)
\in \bc p\cap \Sigma(\bar{L})$ and $q'':=p$ meet our requirements.  

If $||p_1||<1, ||p_2||=1$, we set $q':=p$ and find a $q''\in \bc p\cap \Sigma(\bar{L})$ 
by the same argument using $w_4$ in the place of $w_3.$  
\end{proof}

\begin{lemma}\label{transverse}
Every $\bc_\lambda$-orbit $\bc p, ~p\in S(L)$, meets $S(L)$ transversally. 
\end{lemma}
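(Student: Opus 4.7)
The plan is to view $S(L)\subset L$ as the common level set $\{\nu_1=1=\nu_2\}$ of the two norm functions $\nu_i(q_1,q_2):=||q_i||$, whose differentials are $\br$-linearly independent at every point of $S(L)$. Hence $\mathcal{T}_pS(L)=\ker d\nu_1\cap\ker d\nu_2$ has real codimension $2$ in $\mathcal{T}_pL$, and since $\mathcal{T}_p(\bc p)$ is itself real $2$-dimensional by Proposition \ref{free} and Lemma \ref{proper}, transversality at $p$ is equivalent to the $\br$-linear map
\[
\Phi_p:\mathcal{T}_p(\bc p)\lr \br^2,\qquad X\mapsto (d\nu_1(X),d\nu_2(X)),
\]
being an isomorphism.

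To compute $\Phi_p$, I would use that $\mathcal{T}_p(\bc p)$ is spanned over $\bc$ by the velocity vector $v_\lambda(p)=\sum_j\lambda_jv_j(p)$. Because $v_j$ has trivial $L_1$-component for $j>n_1$ and trivial $L_2$-component for $j\le n_1$, Remark \ref{positivity} gives, for any $z\in\bc$,
\[
d\nu_1(zv_\lambda(p))=\re(z\mu_1),\qquad d\nu_2(zv_\lambda(p))=\re(z\mu_2),
\]
where $\mu_1:=\sum_{j\le n_1}\nu'_{j,p_1}(1)\lambda_j\in C_1$ and $\mu_2:=\sum_{j>n_1}\nu'_{j,p_2}(1)\lambda_j\in C_2$, all coefficients being non-negative by Definition \ref{standard}(ii). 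The central task is to show $\mu_i\neq 0$, i.e.\ $\mu_i\in C_i^\circ$. If every $\nu'_{j,p_1}(1)$ vanished for $j\le n_1$ then Definition \ref{standard}(ii) would put $\br_+\epsilon_j$ in the isotropy at $p_1$ for each $j$; multiplying these over $j$ yields that the positive-real part of the diagonal embedding $\Delta\hookrightarrow T_1$ fixes $p_1$.  But Definition \ref{standard}(i) identifies the $\Delta$-action on $L_1$ with the free structure-group action through the $d$-fold cover $\Delta\to\bc^*$, which forces $t^d=1$ for every $t\in\br_+$, a contradiction. The same reasoning gives $\mu_2\neq 0$, so $\mu_i\in C_i^\circ$.

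To finish, I invoke the weak hyperbolicity condition (1), which places $C_2^\circ$ in the open upper half-plane and $C_1^\circ$ in the closed upper half-plane while keeping $C_1^\circ\cap C_2^\circ=\emptyset$. A real linear dependence $\mu_1=t\mu_2$ with $t\in\br$ would force $t\ge 0$ on comparing imaginary parts; $t=0$ is ruled out by $\mu_1\neq 0$, and $t>0$ would place $\mu_1\in C_2^\circ$, contradicting disjointness of the deleted cones. Hence $\mu_1,\mu_2$ are $\br$-linearly independent, so $z\mapsto(\re(z\mu_1),\re(z\mu_2))$ is an $\br$-linear isomorphism $\bc\to\br^2$, and $\Phi_p$ is surjective. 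The step I expect to demand the most care is the non-vanishing of the $\mu_i$, since it chains both clauses of Definition \ref{standard}: clause (ii) to re-interpret a vanishing derivative as an isotropy condition on each $\br_+\epsilon_j$, and clause (i) to rule out the resulting positive-real element of $\Delta$ lying in the isotropy at $p_i$.
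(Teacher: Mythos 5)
Your proof is correct and takes essentially the same route as the paper: reduce transversality to surjectivity of the $\br$-linear map $z\mapsto(\re(z\mu_1),\re(z\mu_2))$ on the orbit tangent space, computed via Remark \ref{positivity}, and rule out a real dependence of $\mu_1,\mu_2$ using the disjointness of the deleted cones $C_1^\circ, C_2^\circ$ given by weak hyperbolicity. The only difference is that you make explicit, via Definition \ref{standard}(i), why $\mu_1,\mu_2\neq 0$ (no $p_i$ can have all $\br_+\epsilon_j$ in its isotropy), a point the paper leaves implicit in its parenthetical ``not all zero''.
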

\begin{proof} Denote by $\pi:L\lr S(L)$ the projection of the principal $(\bc^*/\bs^1)^2\cong \br_+^2$-bundle.  
Evidently, the inclusion $j:S(L)\hookrightarrow L$ is a 
cross-section and so $L\cong S(L)\times \br^2_+$.  The second projection $\nu:L\lr \br^2_+$ is just the map $L\ni p=(p_1,p_2)\mapsto (\nu_1(p),\nu_2(p))$ where $\nu_i(p)=  ||p_i||\in \br_+$.   
One has therefore an isomorphism 
$\mathcal{T}_p L|_{S(L)}\cong \mathcal{T}_pS(L)\oplus \br^2$, and the corresponding second projection map $\mathcal{T}_p L\lr \br^2$ is the differential of $\nu$.   
Therefore  $\bc p$ is {\it not} transverse to $S(L)$ if and only 
if $av_\lambda(p)\in \mathcal{T}_pS(L)$ for some complex number $a\neq 0$; equivalently, if and 
only if  $d\nu_i(av_\lambda(p))=0, i=1,2,$ for some $a\neq 0$.

By Remark \ref{positivity} we have 
$d\nu_i(av_\lambda(p))=\sum_{1\leq j\leq n_1}d\nu_i(a\lambda_jv_j(p))=
\sum_{1\leq j\leq n_1}\re( a\lambda_j) \nu'_{j,p_1}(1)$.
Similarly, $av_\lambda(p)(\nu_2)=\sum_{n_1<j\leq N}
\re(a\lambda_j)\nu'_{j, p_2}(1)$.   Therefore, $\bc p$ is 
not transverse to $S(L)$ if and only if $
\sum_{1\leq j\leq n_1}
\re(a\lambda_j)r_j=0=
\sum_{n_1<j\leq N}\re(a\lambda_j)s_j$ for some complex number $a\neq 0$ and reals $r_j,s_k\geq 0$ (not all zero).  This means that $\sqrt{-1}\br \subset aC_1^\circ\cap aC_2^\circ$ and 
hence $C_1^\circ\cap C_2^\circ\neq \emptyset $, contradicting the weak hyperbolicity 
condition.  \end{proof}

\noindent
{\it Proof of Theorem \ref{cbundle}:}
 We shall first show that $L/\bc$ is Hausdorff 
by showing that $\pi_\lambda:L\lr S(L)$ which sends 
$p\in L$ to the unique point in $\bc p\cap S(L)$ is continuous.   

Let  
$(p_n)$ be a sequence in $L$ that converges to a point $p_0\in L$. Let $q_n:=\pi_\lambda(p_n)\in S(L)$
and choose $z_n\in \bc$ such that $\alpha(z_n).p_n=q_n$.  
Since $||p_n||, ||q_n||, n\geq 1,$ are bounded, it follows by an argument similar to the proof of 
Lemma \ref{proper} that $(z_n)$ is bounded, and, passing to a subsequence if necessary,  
we may assume that it converges to a $z_0\in \bc$.  
By the continuity of $\bc$-action, $\alpha(z_m).p_n\to \alpha(z_0).p_0$ 
as $m,n\to \infty$.  Therefore $\alpha(z_n).p_n=q_n\to \alpha(z_0).p_0$ 
and $\pi_\lambda(p_0)=q_0$ and so $\pi_\lambda$ is continuous and that the restriction of 
$\pi_\lambda$ to $S(L)$ is a homeomorphism whose 
inverse is the composition $S(L)\hookrightarrow L\lr L/\bc$.

By what has just been shown,  
$L/\bc$ is in fact a Hausdorff manifold and that $\pi_\lambda|_{S(L)}$ is a diffeomorphism.
The orbit space $L/\bc$ has a natural structure of a 
complex analytic space with respect to which the projection $L\lr L/\bc$ is analytic.   Using Lemma \ref{transverse} we see that $L\lr L/\bc$ is a submersion. It follows that $L$ is the total space of a complex analytic principal bundle with fibre and structure 
group $\bc$.  The last statement of the theorem follows from Lemmata \ref{quotient} and \ref{transverse}.
\hfill $\Box$

\begin{remark}\label{egdiag}{\em 
(i) When $X_1$ is a point, one has $X\cong X_2, L\cong \bc^*\times L_2$.  In this case, the orbit space $L/\bc$ 
is readily identified with $L_2/\bz$ where the $\bz$ action is generated by $v\mapsto \prod_{2\leq j\leq N} \exp( 2\pi\sqrt{-1}\lambda_j/\lambda_1)\epsilon_j. v$ where $v\in L_2$.  The projection $L_2\lr S_\lambda(L)$ is a covering projection with deck 
transformation group $\bz$.\\
(ii) When $\lambda$ is of scalar type, the projection $L\lr X$ factors through $S_\lambda(L)$ and yields a complex analytic bundle $S_\lambda(L)\lr X$ with fibre and structure group the elliptic curve 
$(\bc^*\times \bc^*)/\bc$.  
When endowed with diagonal type complex structure  the projection $S_\lambda(L)\lr X$ of the principal 
$\bs^1\times \bs^1$-bundle, which  is smooth, is not 
complex analytic in general.  (Cf. Theorem \ref{transcendence}.)\\
(iii)  When the $X_i$ do not admit any non-trivial $T_i$-
action, we obtain only scalar type complex structures on 
$S(L)$.  For example, this happens when the $X_i$ are 
compact Riemann surfaces of genus at least $2$.\\
(iv) Let $X_1=G_k(\bc^n)$, $X_2=G_l(\bc^m)$.
We start with the example \ref{std}(iv) of standard actions of $T_i,$ corresponding to matrices of exponents $A_i$, constructed on $L_i$ where $\bar{L}_i$ are the negative ample generators of $Pic(X_i)$. 
For any admissible $\bc_\lambda\subset T_1\times T_2$ we obtain a complex structure of diagonal type on 
$S(L)$.\\
(v)  Let $r_i, i=1,2$ be positive integers.  Suppose that $p_i:(\bc^*)^{n_i}\cong \wt{T}_i\lr T_i\cong(\bc^*)^{n_i}$ is the covering projection defined as 
$\prod t_j\epsilon_j\mapsto \prod t_j^{r_i}\epsilon_j$ where $r_i\geq 1$.  Then a standard $T_i$-action on $L_i$ induces a standard $\wt{T}_i$-action. If $\alpha_\lambda:\bc\lr T=T_1\times T_2$ determines an admissible diagonal type action on $L=L_1\times L_2,$  then the lift $\wt{\alpha}_\lambda:\bc\lr \wt{T}$ also determines an admissible 
diagonal type action $\alpha_{\wt{\lambda}}$ where 
$\wt{\lambda}_j=(1/r_1)\lambda_j, 1\leq j\leq n_1,$ 
and $\wt{\lambda}_j=(1/r_2)\lambda_j, n_1<j\leq N$. 
Indeed the resulting $\bc$ action is the `same' and so 
$S_\lambda(L)=S_{\wt{\lambda}}(L)$. In particular, if 
$p_i':\wt{T}_i\lr T_i', i=1,2,$ is another pair of such coverings and  
if $\alpha_\lambda:\bc\lr T$ and $\alpha_{\lambda'}:\bc\lr T'$ 
define admissible diagonal type actions on $L$ such that $\alpha_{\wt{\lambda}}:\bc\lr \wt{T}$ is a common lift of both $\alpha_{\lambda}$ and $\alpha_{\lambda'}$, then $S_\lambda=S_{\wt{\lambda}}=S_{\lambda'}$.
}
\end{remark}

We conclude this section with the following observation.

\begin{theorem}  \label{symplectic}
Suppose that $H^1(X_1;\br)=0$ and that $c_1(\bar{L}_1)\in H^2(X_1;\br)$ is non-zero.  Then $S(L)$ is not symplectic and hence 
non-K\"ahler with respect to {\em any} complex structure.  
\end{theorem}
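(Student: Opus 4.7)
The plan is to rule out any symplectic form on $S(L)$ by cohomological dimension counting; the stronger assertion that $S(L)$ is non-K\"ahler with respect to every complex structure is then automatic, since a K\"ahler form is in particular symplectic. Set $d_i=\dim_\bc X_i$, so $\dim_\br S(L_i)=2d_i+1$ and $\dim_\br S(L)=2n$ with $n=d_1+d_2+1$. I argue by contradiction: assuming a symplectic $\omega$ exists on $S(L)$, one has $[\omega]^n\neq 0$ in $H^{2n}(S(L);\br)$, and I will show this top power is forced to vanish.

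The first step is the vanishing $H^1(S(L_1);\br)=0$. Indeed, $S(L_1)\to X_1$ is a principal $\bs^1$-bundle whose Euler class is $c_1(\bar{L}_1)$, and the Gysin sequence gives
\[
0=H^1(X_1;\br)\lr H^1(S(L_1);\br)\lr H^0(X_1;\br)\xrightarrow{\cup c_1(\bar{L}_1)} H^2(X_1;\br).
\]
Since $c_1(\bar{L}_1)\neq 0$, the rightmost arrow is injective on $H^0(X_1;\br)=\br$, forcing $H^1(S(L_1);\br)=0$. By the K\"unneth formula the mixed term of $H^2$ is then killed:
\[
H^2(S(L);\br)=p_1^*H^2(S(L_1);\br)\oplus p_2^*H^2(S(L_2);\br),
\]
where $p_i\colon S(L)\to S(L_i)$ are the projections. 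Hence $[\omega]=p_1^*\alpha_1+p_2^*\alpha_2$ for uniquely determined $\alpha_i\in H^2(S(L_i);\br)$.

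The finishing step is a dimension count. Since $S(L_i)$ is closed of real dimension $2d_i+1$, the class $\alpha_i^k$ vanishes whenever $2k>2d_i+1$, i.e.\ whenever $k\geq d_i+1$. Binomially expanding,
\[
[\omega]^n=\sum_{k=0}^{n}\binom{n}{k}\,p_1^*(\alpha_1^{n-k})\cdot p_2^*(\alpha_2^k),
\]
a nonvanishing summand would require simultaneously $n-k\leq d_1$ and $k\leq d_2$; adding these inequalities yields $n\leq d_1+d_2<n$, a contradiction. Therefore $[\omega]^n=0$, incompatible with $\omega$ being symplectic on the closed $2n$-manifold $S(L)$.

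The only genuinely nontrivial ingredient is the Gysin-sequence vanishing of $H^1(S(L_1);\br)$, and this is exactly where both hypotheses (on $H^1(X_1;\br)$ and on $c_1(\bar{L}_1)$) are used; once that is in hand, K\"unneth together with the parity of $\dim_\br S(L_i)$ closes the argument. Note that no feature of any particular complex structure on $S(L)$ enters the proof, which is precisely what delivers the conclusion for \emph{every} complex structure.
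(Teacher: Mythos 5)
Your proof is correct and follows essentially the same route as the paper: you establish $H^1(S(L_1);\br)=0$ from the hypotheses (via the Gysin sequence, which is just the paper's Leray--Serre spectral sequence argument repackaged), apply K\"unneth to split $H^2(S(L);\br)$ into pullbacks from the two odd-dimensional factors, and kill the top power $[\omega]^n$ by the same dimension count. The only difference is presentational, so there is nothing to add.
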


\begin{proof}
In the Leray-Serre spectral sequence over $\br$ for the 
$\bs^1$-bundle with projection $q:S(L_1)\lr X$ the differential $d:E^{0,1}_2\cong H^1(\bs^1;\br)\cong \br \lr E^{2,0}_2=H^2(X_1;\br)$ is non-zero. 
It follows that $E^{0,1}_3=E^{0,1}_\infty=0$. Since $H^1(X_1;\br)=0$, we see that $H^1(S(L_1);\br)=0$. 
Hence, by the K\"unneth formula, $H^2(S(L);\br)=H^2(S(L_1);\br)\oplus H^2(S(L_2);\br)$. 

Let $u_i\in H^2(S(L_i);\br),i=1,2,$ be arbitrary.  Since $\dim S(L_i)$ is odd for $i=1,2$, 
$u_1^ru_2^s=0$ for any $r, s\geq 0$ such that $r+s=n$, where $2n:=\dim_\br S(L)$.  Hence 
$\omega^n=0$ for any $\omega\in H^2(S(L);\br)$.  
\end{proof}



\section{Complex structures of linear type}
Let $X_i=G_i/P_i, i=1,2$, where the $G_i$  are simply-connected complex simple linear algebraic groups, $P_i$ any maximal parabolic subgroup,  and  $\bar{L}_i$ the negative ample generator of the Picard group of $X_i$.
We endow $\bar{L}_i$ with a hermitian metric invariant under a suitable maximal compact subgroup $H_i\subset G_i$. Let $L=L_1\times L_2$ and let $S(L)$ be product $S(L_1)\times S(L_2)$ where $S(L_i)\subset L_i$ is the unit circle bundle over $X_i, i=1,2$.  
It can been seen that $S(L_i)$ is simply-connected. Indeed it is a homogeneous space $H_i/Q_i$  where $Q_i$ is connected and is the semi simple part of the centralizer of a circle subgroup contained in $H_i$  (see \cite{rs}, \cite{sankaran}). By a classical result of  H.-C. Wang \cite{wang}, it follows that  $S(L)$ admits complex structures invariant under the action of $H_1\times H_2$.  The complex structures considered by Wang are the same as those of scalar type considered in \S2.  
The  $H:=H_1\times H_2$-action does not   
preserve the complex analytic structure when $S(L)$ is endowed with the more general diagonal type complex structures.   

In this section we shall construct a complex structure on $S(L)$ which will be referred to as 
{\it linear type}. Our construction will be more general 
in that {\it we assume only that $G_i$ is any simply connected semi simple Lie group and $P_i\subset G_i$ any parabolic subgroup.}

The first step towards construction of 
linear type complex structure on $S(L)$ is to produce a standard action of a torus $T'_i\subset G_i$ on $L_i\lr G_i/P_i$.  The following consideration shows that there can be no such action for any torus of the semi simple group $G_i$. 

Suppose that $\bar{L}\lr G/P$ is  a $G$-equivariant line bundle where $G$ is a simply-connected semi simple complex linear algebraic group and $P$ {\it any} parabolic subgroup of $G$. We assume that $G$ action on $G/P$ is almost effective, as otherwise $G/P=G'/P'$ where $G'$ is proper factor of $G$ and $P'=P\cap G'$.   
(Almost effective action means that the subgroup that fixes every element of $G/P$ is finite.) Now the subgroup 
of $G$ which fixes every element of $G/P$ is readily seen 
to be equal to the centre $Z(G)$ of $G$. 
Let $T'$ be any torus of $G$. We claim  that the $T'$-action on $L$ is {\it not} $d$-standard for any $d\geq 1$ (with respect to any 
isomorphism $T'\cong (\bc^*)^k$, where $k\leq l=rank(G)$). 
If the $T'$-action were $d$-standard, then $T'$ would contain a subgroup $\Delta\cong\bc^*$ whose restricted action is as described in Definition \ref{standard}(i).  Since the $G$-action commutes with that 
of the structure group $\bc^*$ of $\bar{L}$, it follows that $z.g(v)=g.z(v)$ for all $v\in \bar{L}, z\in\Delta,g\in G$.  Since the $G$-action on $L$ is almost effective, we see $g^{-1}zg=\zeta z $ where $\zeta\in Z(G)$, the centre of $G$, which is a finite group.  This implies that $\Delta/(\Delta\cap Z(G))$ is contained in the centre of $G/Z(G)$ contradicting our hypothesis that $G$ is semi simple.  

We shall show in Proposition \ref{homogstd} that, when $\bar{L}$ is a line bundle associated to a negative dominant integral weight, it is possible to extend the $G$ action on $\bar{L}$ and on $G/P$ to a larger group $\wt{G}$ which is reductive such that the bundle $\bar{L}\to G/P$ is $\wt{G}$-equivariant and the action of a maximal torus $\wt{T}$ of $\wt{G}$ on $L$ is $d$-standard for a suitable $d\geq 1$.  

In order to construct linear type complex structure on 
$S(L)$, we need to assume that $\bar{L}_i, i=1,2,$ is a negative ample line bundle over $G_i/P_i$. This assumption
allows us to view $\bar{L}_i$ as the restriction of 
the tautological bundle over a projective space $\bp^{N_i}$ to $G_i/P_i$ via an imbedding $G_i/P_i\hookrightarrow \bp^{N_i}$ defined by the very ample line bundle $\bar{L}^\vee$. 
As this fact will be exploited in our construction of linear type complex structure, it fails when line bundle $\bar{L}_i$ is not negative ample. 

We briefly recall some basic facts and notions about the representation theory of $G$, referring the reader to \cite{humphreys} for details. 

Let $G$ be a semi simple, simply-connected complex linear algebraic group.  Let $T$ be a maximal torus and let $B$ be a  
Borel subgroup $B$ containing $T$.  Let $l=\dim T$ be the rank of $G$. Denote by $R(G)$---or more briefly $R$---the set of roots,  by $R^+$ the positive roots, by $\Lambda$ the weight lattice and by $Q\subset \Lambda$ the root lattice determined by $T\subset B\subset G$.  We shall denote the set of coroots by $R^\vee$. Since $G$ is assumed to be simply connected, $\Lambda=\chi(T)$, the group of characters $T\lr \bc^*$ of $T$.  Since $B=T. B_u$, $B_u$ being the unipotent, 
every character of $T$ extends uniquely to a (algebraic) character of $B$ and we have $\chi(T)=\chi(B)$. 
Let $\Phi^+\subset R^+$ denote the set of simple positive roots and let $\Lambda^+\subset \Lambda$ denote the dominant (integral) weights.   We shall denote by $W$ the Weyl group of $G$ with respect to $T$. It is 
generated by the set  $S$ of  the fundamental reflections 
$s_\alpha, \alpha\in \Phi^+$.  $(W,S)$ is a finite Coxeter group whose longest element will be denoted $w_0$.  

For $\omega\in \Lambda^+$, $V(\omega)$ denotes the finite dimensional irreducible highest weight $G$-module with highest weight $\omega$. Also, for any  $\omega\in \Lambda,$ one has a $G$-equivariant line bundle $\bar{L}_\omega\lr G/B$ whose total space is $G\times_B \bc_{-\omega}$ where $\bc_{-\omega}$ is the $1$-dimensional $B$-module with character $-\omega:B\lr \bc^*$.  If $\omega$ is dominant, then $H^0(G/B,L_\omega)^\vee=V(\omega)$ as $G$-module.  If $u_\omega\in V(\omega)$ is a highest weight vector, then $P_\omega$, the subgroup of $G$ which stabilizes the $1$-dimensional vector space $\bc u_\omega$ is a parabolic subgroup that contains $B$ and $\bar{L}_\omega$ is isomorphic to the pull-back of a line bundle, again denoted $\bar{L}_\omega$ over $G/P$ where $P$ is any parabolic subgroup such that $B\subset P\subset P_\omega$.  Every parabolic subgroup that contains $B$ arises as $P_\omega$ for some $\omega\in \Lambda^+$.
Moreover, $\bar{L}_\omega\lr G/P_\omega$ is (very) ample. If $\omega$ is a positive multiple of a fundamental weight $\varpi$, then $P_\omega$ is a maximal parabolic  which corresponds to `omitting' $\varpi$.  
  
Let $\omega\in \Lambda^+$ and  
let $\Lambda(\omega)\subset \Lambda$ denote the  set of all weights of $V(\omega).$  If $\mu\in \Lambda(\omega)$, we denote the multiplicity of $\mu$ in $V(\omega)$ by $m_\mu$; thus $m_\mu=\dim V_\mu(\omega)$, where $V_\mu(\omega)$ is the $\mu$-weight space $\{v\in V(\omega)\mid t.v=\mu(t)v~\forall 
t\in T\}$.  The set  
$\Lambda(\omega)$ is stable under the action of $W$.   We put a hermitian inner product on $V(\omega)$ with respect to which the decomposition $V(\omega)=\oplus_{\mu\in \Lambda(\omega)} V_\mu(\omega)$ is orthogonal.  Such an hermitian product is invariant under the compact torus $K\subset T$.  Indeed, without loss of generality we may assume that the inner product is invariant under a maximal compact subgroup of $G$ that contains $K$. 

Let $\varpi_1,\dots, \varpi_l$ be the fundamental weights. 
Consider the homomorphism $\psi:T\lr (\bc^*)^l$ of algebraic groups defined as 
$t\mapsto (\varpi_1(t), \dots, \varpi_l(t))$.  It is an isomorphism since $\varpi_1,\dots,\varpi_l$ is a $\bz$-basis for $\chi(T)$.   We shall identify $T$ with $(\bc^*)^l$ via $\psi$. Let $\omega\in \Lambda^+$.   It is not difficult to see that the $T$-action on $V(\omega)\setminus{0}\lr P(V(\omega))$ is {\it not} standard since  $w_0(\omega)\in \Lambda(\omega)$ is {\it negative} dominant, i.e., $-w_0(\omega)\in \Lambda^+$. 
Write $\mu=\sum_{1\leq j\leq l}a_{\mu, j}\varpi_j$ for $\mu\in \Lambda(\omega)$ so that $\mu(t)=\prod_{1\leq j\leq l}t_j^{a_{\mu,j}}$ where $t=(t_1,\dots,t_l)\in T$.  If  $v\in V_\mu(\omega)$, then $t.v=
\prod t_j^{a_{\mu,j}}.v$.   Set 
$d':=1+\sum |a_{\mu,j}|$ where the sum is over $\mu\in \Lambda(\omega), 1\leq j\leq l$. The group $T':= T\times \bc^*$ acts on $V(\omega)$ where the last factor acts via the covering projection $\bc^*\lr \bc^*$, $z\mapsto z^{-d'},$ where the target $\bc^*$ acts as scalar multiplication.  Thus $(t,z).v=\mu(t)z^{-d'}v$ where $v\in V_\mu(\omega), (t,z)\in T'$.  
Now consider the $(l+1)$-fold covering projection $\wt{T}:=(\bc^*)^{l+1}\lr T',$   defined as $ (t_1,\dots,t_{l+1})\mapsto (t_{l+1}^{-1}t_1,\dots, t_{l+1}^{-1}t_l,\prod_{1\leq j\leq l+1} t^{-1}_j)$.  
The torus $\wt{T}$ acts on the principal $\bc^*$-bundle $V(\omega)\setminus\{0\}\lr \bp(V(\omega))$ via the above surjection.    

Denote by $\wt{\epsilon}_j:\bc^*\lr \wt{T}$ the $j$th coordinate imbedding.
For any $\mu\in \Lambda(\omega)$, 
and any $v\in V_\mu(\omega)$, we have  $z\wt{\epsilon}_{l+1}.v= z^{d'}\prod_{1\leq j\leq l}z^{-a_{\mu,j}}v=z^{d'-\sum a_{\mu,j}}v$, and, 
when $j\leq l$, we have $z\wt{\epsilon}_j.v=z^{d'+a_{\mu,j}}v$.  Also, if $z=(z_0,\dots,z_0)\in \wt{T}$, then 
$z.v=z_0^{(l+1)d'}v$.
Observe that the exponent of $z$ that occurs in the above formula for $z\wt{\epsilon_j}.v$ is positive for $1\leq j\leq l+1$ by our choice of $d'$.     We shall denote this exponent by $d_{\mu,j}$, that is, 
\[
d_{\mu,j}=\left\{
\begin{array}{lr}
                     d'+a_{\mu,j}, & 1\leq j\leq l,\\
                     d'-\sum_{1\leq i\leq l} a_{\mu,i}, & j=l+1,\\
                    \end{array} \right . \eqno{(3)}
\]                    
where $\mu=\sum_{1\leq j\leq l}a_{\mu,j}\varpi_j\in \Lambda(\omega)$.

Next note that the compact torus $\wt{K}:=K\times \bs^1\subset T\times  \bc^*$ preserves the hermitian 
product on $V(\omega)$ and hence the (induced) hermitian metric on the tautological line bundle 
over $\bp(V(\omega))$.   From the explicit description of the action just given, it is clear that 
conditions (i) and (ii) of Definition \ref{standard} hold. Thus we have extended the $T$-action to an action of $\wt{T}$-action which is standard.    
We are ready to prove 

\begin{proposition}  \label{homogstd} We keep the above notations.
Let $\omega \in \Lambda^+$ be any dominant weight of $G$ and let $P$ be any parabolic subgroup such that 
$B\subset P\subset P_\omega.$ 
Then the $T$-action can be extended to a $d$-standard action of $\wt{T}:=T\times \bc^*$ on  
$L_{-\omega}\lr G/P$ where $d=d'(l+1)$. 
\end{proposition}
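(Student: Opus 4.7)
\textbf{Proof plan for Proposition \ref{homogstd}.}

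The plan is to realize $L_{-\omega}\to G/P$ concretely as a $\wt{T}$-invariant subspace of the punctured representation space $V(\omega)\setminus\{0\}$, and then read off standardness from the explicit weight decomposition. First, I would use the highest weight vector $u_\omega\in V(\omega)$ to produce the $G$-equivariant closed immersion $G/P_\omega\hookrightarrow \bp(V(\omega))$, $gP_\omega\mapsto [gu_\omega]$, and pull back the tautological $\bc^*$-bundle to $G/P_\omega$; since $P\subset P_\omega$, further pulling back along the natural projection $G/P\to G/P_\omega$ identifies $L_{-\omega}\to G/P$ with the sub-bundle of the trivial bundle $G/P\times V(\omega)$ whose fibre over $gP$ is $\bc^\ast gu_\omega$. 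In particular $L_{-\omega}$ embeds $\wt T$-equivariantly into $V(\omega)\setminus\{0\}$ once we define a $\wt T$-action on $V(\omega)$ that preserves this locus.

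Next, I would take the $\wt T=(\bc^*)^{l+1}$-action on $V(\omega)$ already constructed in the paragraphs preceding the proposition, namely the composition of the covering $\wt T\to T'=T\times\bc^*$, $(t_1,\ldots,t_{l+1})\mapsto (t_{l+1}^{-1}t_1,\ldots,t_{l+1}^{-1}t_l,\prod_j t_j^{-1})$, with the $T'$-action $(t,z)\cdot v=z^{-d'}\mu(t)v$ on each weight space $V_\mu(\omega)$. Since $T\subset G$ preserves $G/P$ and the residual $\bc^*$-factor acts by scalar multiplication on $V(\omega)$, this action preserves the cone $Y\subset V(\omega)\setminus\{0\}$ lying over $G/P$, and hence restricts to a $\wt T$-action on $L_{-\omega}$ extending the given $T$-action.

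It remains to verify the two conditions of Definition \ref{standard} with $d=d'(l+1)$. For (i), a direct computation of the covering $\wt T\to T'$ shows that the diagonal $(z,\ldots,z)\in \wt T$ maps to $(1,\ldots,1,z^{-(l+1)})\in T\times\bc^*$, which acts on every $v\in V(\omega)$ as multiplication by $z^{d'(l+1)}=z^d$; hence the diagonal action on $L_{-\omega}$ factors through the $d$-fold covering of the structure group $\bc^*$, as required. For (ii), fix $v\in L_{-\omega}$ and decompose $v=\sum_{\mu\in\Lambda(\omega)} v_\mu$ into the hermitian-orthogonal weight spaces; the formula $t\wt\epsilon_j\cdot v=\sum_\mu t^{d_{\mu,j}}v_\mu$ from (3) gives
\[
\nu_{v,j}(t)^2\;=\;\|t\wt\epsilon_j\cdot v\|^2\;=\;\sum_{\mu\in\Lambda(\omega)} t^{2d_{\mu,j}}\|v_\mu\|^2,
\]
whose derivative in $t>0$ equals $\sum_\mu 2d_{\mu,j}t^{2d_{\mu,j}-1}\|v_\mu\|^2$. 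The real technical point is to check that every exponent $d_{\mu,j}$ is strictly positive: this is precisely what forced the somewhat inflated choice $d'=1+\sum_{\mu,j}|a_{\mu,j}|$ above, and I would verify it case-by-case from (3), noting that for $j\le l$ one has $d'+a_{\mu,j}\ge 1$ and for $j=l+1$ one has $d'-\sum_i a_{\mu,i}\ge 1$. Once this positivity is in hand, $\nu_{v,j}'(t)>0$ for every $v\neq 0$, so (ii) holds vacuously (the isotropy alternative never needs to be invoked). The only genuine obstacle is ensuring the bookkeeping of this covering $\wt T\to T'\to \mathrm{Aut}(\bar L_{-\omega})$ is consistent with the identification of the structure group of $L_{-\omega}$; all positivity and standardness properties then fall out of the orthogonal weight decomposition.
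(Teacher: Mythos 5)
Your proposal is correct and follows essentially the same route as the paper: realize $L_{-\omega}$ via the projective embedding $G/P_\omega\hookrightarrow \bp(V(\omega))$ given by the very ample $\bar{L}_{-\omega}^\vee$, restrict the $\wt{T}$-action already constructed on the tautological bundle, verify conditions (i) and (ii) of Definition \ref{standard} from the exponents $d_{\mu,j}>0$ and the orthogonal weight decomposition, and handle general $P$ by pulling back along $G/P\to G/P_\omega$ (the paper cites Example \ref{std}(ii) for this last step, which you verify directly). The only nitpick is that for $P\subsetneq P_\omega$ the total space of the pulled-back bundle maps $\wt{T}$-equivariantly onto, rather than embeds into, the punctured cone in $V(\omega)\setminus\{0\}$; this does not affect your verification, since only fibrewise equivariance and the induced metric are used.
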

\begin{proof}  
First assume that $P=P_\omega$.
Since $\bar{L}_\omega$ is a very ample line bundle over $G/P_\omega$, one has a $G$-equivariant embedding 
$G/P_\omega\lr \bp(V(\omega))$ where $V(\omega)=H^0(G/P_{\omega}, \bar{L}_\omega)^\vee.$   By our discussion above, the $T$-action on the tautological bundle over the projective space $\bp(V(\omega))$ has been extended 
to a $d$-standard action of $\wt{T}$ for an appropriate $d>1$.  The tautological bundle over $\bp(V(\omega))$ restricts to $L_{-\omega}$ on $G/P_\omega$. Clearly the $L_{-\omega}$ is $\wt{T}$-invariant.   Put any $\wt{K}$-invariant 
hermitian metric on $V(\omega)$ where $\wt{K}$ denotes the maximal compact subgroup of  $\wt{T}$.
As observed above, $z\wt{\epsilon}_j.v=z^{d_{\mu,j}}v$ 
where $d_{\mu,j}>0$ for $v\in V_\mu(\omega)$, it 
follows that condition (ii) of Definition \ref{standard} holds. Therefore the $\wt{T}$-action on $L_{-\omega}$ is 
$d$-standard.  

Now let $P$ be any parabolic subgroup as in the proposition. 
One has a $\wt{T}$-equivariant morphism $G/P\lr G/P_\omega$ under which $\bar{L}_{-\omega}\lr G/P_\omega$ pulls back to $\bar{L}_{-\omega}\lr G/P$. 
In view of Remark \ref{std}(ii), it follows that the $\wt{T}$-action on $L_{-\omega}\lr G/P$ is $d$-standard.  
\end{proof}

Let $\pi:\wt{G}=G\times \bc^*\lr G\times \bc^*$ be the $(l+1)$-fold covering obtained from the 
$(l+1)$-fold covering of the last factor and identity on the first.  The maximal torus $\pi^{-1}(T\times\bc^*)$ of 
$\wt{G}$  can be identified with $\wt{T}$.  With respect to an appropriate choice of identification $\wt{T}\cong (\bc^*)^{l+1}$,  we see that the 
action of  $G$ on $L_{-\omega}\lr G/P_\omega$ extends to $\wt{G}$ in such a manner that the $\wt{T}$-action is $d$-standard where $d=(l+1)d'$ as above.  Since $G/P_\omega=\wt{G}/\wt{P}_\omega$, where $\wt{P}=\pi^{-1}( P_\omega\times \bc^*)$,  the $\bc^*$-bundle $L_{-\omega}\lr \wt{G}/\wt{P}_\omega$ is $\wt{G}$-equivariant.  The parabolic subgroup $\wt{P}_\omega$ contains the Borel subgroup $\wt{B}:= \pi^{-1}(B\times \bc^*)$.  We shall refer to $\bar{L}_{-\omega}\lr \wt{G}/\wt{P}_\omega$ as a $d$-{\it standard $\wt{G}$-homogeneous line bundle}.

Let $i=1,2$.  We shall write $L_i, P_i$ to abbreviate $L_{-\omega_i}, P_{\omega_i}$, etc. Note that $\bar{L}_i$ is a negative ample line over $X_i:=G_i/P_i=\wt{G}_i/\wt{P}_i$ and is $d_i$-standard $\wt{G}_i$-homogeneous. Let $\wt{G}=\wt{G}_1\times \wt{G}_2$, $(\bc^*)^N\cong\wt{T}=\wt{T}_1\times \wt{T}_2$ where $N=rank( \wt{G})=n_1+n_2$ with $n_i:=l_i+1$   and $\wt{B}=\wt{B}_1\times \wt{B}_2$.    

Let $\lambda\in Lie(\wt{B})$ and let $\lambda=\lambda_s+\lambda_u$ be its Jordan decomposition, where  $\lambda_s=(\lambda_1,\dots,\lambda_N)\in \bc^N=Lie(\wt{T})$ satisfies the weak hyperbolicity condition (1) of type $(n_1,n_2)$ and $\lambda_u\in Lie(\wt{B}_u)$, the 
 Lie algebra of the unipotent radical $\wt{B}_u$ of $\wt{B}$.  Thus $[\lambda_u,\lambda_s]=0$ in $Lie(\wt{B})$.  The analytic imbedding $\alpha_\lambda:\bc\lr \wt{B}$ where $\alpha_\lambda(z)=\exp(z\lambda)=\exp(z\lambda_s).\exp(z\lambda_u)$ defines an action, again denoted $\alpha_\lambda$, of $\bc$ on $L:=L_1\times L_2$ and an action 
 $\wt{\alpha}_\lambda$ on  $V(\omega_1)\times V(\omega_2)$.    Denote by $\bc_\lambda$ the image 
 $\alpha_\lambda(\bc)\subset \wt{B}$. 
 We shall now give an explicit description of these  
 actions.  Let $v_i\in V(\omega_i)$ and write $v_i=\sum_{\mu\in \Lambda(\omega_i)}v_{\mu}$ where 
 $v_{\mu}\in V_\mu(\omega_i)$.   Set  
\[\lambda_\mu:=\sum \lambda_j d_{\mu,j} \eqno{(4)}\]  
where the sum ranges over ${n_{i-1}< j\leq n_{i-1}+n_i}$ with $n_0=0$.  
Then $\wt{\alpha}_{\lambda_s}(z)(v_1,v_2)=(u_1,u_2)$ where 
 \[u_i=\sum_{\mu\in \Lambda(\omega_i)}\prod_{j}\exp(z\lambda_j)\wt{\epsilon}_j.v_\mu=\sum_\mu \prod_j 
 (\exp(z\lambda_jd_{\mu,j})v_\mu 
 =\sum_\mu \exp(z\lambda_{\mu})v_\mu. \eqno{(5)}
 \]
where the product is over $j$ such that ${n_{i-1}<j\leq n_{i-1}+n_i}$.
 
The $\bc$-action $\alpha_{\lambda_s}$ on $L$ is just the restriction to  $L\subset V(\omega_1)\times V(\omega_2)$ of the $\bc$-action $\wt{\alpha}_{\lambda_s}$.  Since the $\lambda_\mu$ are all {\it positive} linear combination of the $\lambda_j$, the action of $\bc$ on  
$V(\omega_1,\omega_2):= (V(\omega_1)\setminus\{0\})\times (V(\omega_2)\setminus\{0\})$, the total space of the product of tautological bundles, is admissible.
 
Fix a basis for $V(\omega_i)$ consisting of weight vectors so that $GL(V(\omega_i))$ is identified with invertible $r_i\times r_i$-matrices,  where $r_i:=\dim V(\omega_i)$.
 Note that the action of the diagonal subgroup of $GL(V(\omega_i))$  on $V(\omega_i)\setminus \{0\}$ is standard and that $\wt{T}$ is mapped into $D$, the diagonal subgroup of $GL(V(\omega_1))\times GL(V(\omega_2))$. 
We put a hermitian metric on $V(\omega_1)\times V(\omega_2)$ which is invariant under the compact torus $(\bs^1)^{r_1+r_2}\subset D$.
    Considered as a subgroup of $GL(V(\omega_1))\times GL(V(\omega_2))$, the $\bc$-action $\wt{\alpha}_{\lambda_s}$ on $V(\omega_1,\omega_2)$ is the {\it same} as that considered by Loeb-Nicolau corresponding to $\lambda_s(\omega_1,\omega_2):= 
    (\lambda_{\mu},\lambda_{\nu})_{\mu\in \Lambda(\omega_1),\nu\in \Lambda(\omega_2)}\in Lie(D)=\bc^{r_1}\times \bc^{r_2}$, where it is understood that each $\lambda_{\mu}$ occurs as many times as $\dim V_\mu(\omega_1), \mu\in \Lambda(\omega_1),$ and similarly for $\lambda_\nu,\nu\in \Lambda(\omega_2)$.   

\noindent 
{\bf Observation:} The $\lambda_s(\omega_1,\omega_2)$ satisfy the weak hyperbolicity condition of type $(r_1,r_2)$ since the $\lambda_\mu$ are {\it positive} integral linear combinations of the $\lambda_j$.

 The differential of the Lie group homomorphism $\wt{G}_1\times \wt{G}_2\lr GL(V(\omega_1))\times GL(V(\omega_2))$ maps $\lambda_s$ to the diagonal matrix $diag(\lambda_s(\omega_1,\omega_2))$ and 
$\lambda_u$ to a nilpotent matrix 
$\lambda_u(\omega_1,\omega_2)$ which commutes with 
$\lambda_s(\omega_1,\omega_2)$. Indeed 
$\lambda(\omega_1,\omega_2):=\lambda_s(\omega_1,\omega_2)+\lambda_u(\omega_1,\omega_2)$ has a block decomposition compatible with weight-decomposition of $V(\omega_1)\times V(\omega_2)$ where the $\mu$-th block is 
$\lambda_\mu I_{m(\mu)}+A_\mu$, where $A_\mu$ is nilpotent and $I_{m(\mu)}$ is the identity matrix of size 
$m(\mu)$, the multiplicity of $\mu\in \Lambda(\omega_i), i=1,2$.

Recall that, for the $\bc$-action $\wt{\alpha}_{\lambda}$ on $V(\omega_1,\omega_2),$  
the orbit space $V(\omega_1,\omega_2)/\bc=:S_\lambda(\omega_1,\omega_2)$ is a complex manifold diffeomorphic to the product of spheres $\bs^{2r_1-1}\times \bs^{2r_2-1}$ by \cite[Theorem 1]{ln}.   Indeed, the canonical projection $V(\omega_1,\omega_2)\lr S_\lambda(\omega_1,\omega_2)$ is the  
projection of a holomorphic principal bundle with fibre and structure group $\bc$.

\begin{theorem}\label{lineartype}  We keep the above notations. Let $\bar{L}_i=\bar{L}_{-\omega_i}$ be a  
$d_i$-standard $\wt{G}_i$-homogeneous line bundle over $X_i=\wt{G}_i/\wt{P}_i, P_i=P_{\omega_i}$ and let $L=L_1\times L_2$.    Suppose that $\lambda=\lambda_s+\lambda_u\in Lie(\wt{B})$ where 
$\lambda_s\in Lie(\wt{T})=\bc^N$ satisfies the weak hyperbolicity condition of type $(n_1,n_2)$.  (See Equation (1), \S2.)\\
(i) The orbit space, denoted $L/\bc_\lambda$, of the $\bc$-action on $L$ defined by $\lambda$ is a Hausdorff complex manifold and the canonical projection $L\lr L/\bc_\lambda$ is the projection of a principal $\bc$-bundle.    Furthermore, 
$L/\bc_\lambda$ is analytically isomorphic to $L/\bc_{\lambda_\varepsilon}$ where $\lambda_\varepsilon:=
Ad(t_\varepsilon)(\lambda)$ and $t_\varepsilon\in \wt{T}$ is such that $\gamma(t_\varepsilon)=\varepsilon$ for all 
$\gamma\in \Phi^+$.\\
(ii)  If $|\varepsilon|$ is sufficiently small, then each orbit of $\bc_{\lambda_\varepsilon}$ on $L$ meets $S(L)$ transversally at a unique point. In particular, the restriction of the projection $L\lr L/\bc_{\lambda_\varepsilon}$ to $S(L)\subset L$ is a diffeomorphism.
\end{theorem}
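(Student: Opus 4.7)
The key observation underlying both parts is that the embedding $X_i \hookrightarrow \bp(V(\omega_i))$ defined by $\bar{L}_i^\vee$ identifies $L_i$ with $\hat{L}_i \setminus \{0\}$, so that $L = L_1 \times L_2$ sits inside $V(\omega_1,\omega_2)$ as a closed $\wt{G}$-invariant complex submanifold. As already recorded before the statement, the $\bc$-action on $L$ induced by $\lambda$ is the restriction of the $\bc$-action on $V(\omega_1,\omega_2)$ determined by the Jordan matrix $\lambda(\omega_1,\omega_2) = \lambda_s(\omega_1,\omega_2) + \lambda_u(\omega_1,\omega_2)$, whose semisimple part satisfies weak hyperbolicity of type $(r_1,r_2)$. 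My plan is to deduce both parts by invoking the Loeb--Nicolau theorem \cite[Theorem 1]{ln} for the ambient product $V(\omega_1,\omega_2)$ and then restricting to the $\bc$-invariant submanifold $L$.

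For part (i), \cite[Theorem 1]{ln} first furnishes $V(\omega_1,\omega_2) \lr V(\omega_1,\omega_2)/\bc$ as a holomorphic principal $\bc$-bundle with Hausdorff complex base. Since $\bc_\lambda \subset \wt{B} \subset \wt{G}$ and $L$ is $\wt{G}$-stable, $L$ is $\bc_\lambda$-invariant, and freeness of the action on the ambient space restricts to freeness on $L$. At each $p \in L$ the orbit tangent $\bc v_\lambda(p)$ lies in $T_p L$, so a local holomorphic transverse slice to the orbit can be chosen inside $L$; this endows $L/\bc_\lambda$ with a Hausdorff complex manifold structure as a closed complex submanifold of $V(\omega_1,\omega_2)/\bc$ and makes $L \lr L/\bc_\lambda$ a holomorphic principal $\bc$-bundle. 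For the biholomorphism $L/\bc_\lambda \cong L/\bc_{\lambda_\varepsilon}$, the element $t_\varepsilon \in \wt{T} \subset \wt{G}$ acts holomorphically on $L$, and since $\textrm{Ad}(t_\varepsilon)(\lambda) = \lambda_\varepsilon$ gives $t_\varepsilon \bc_\lambda t_\varepsilon^{-1} = \bc_{\lambda_\varepsilon}$, the holomorphic translation $p \mapsto t_\varepsilon \cdot p$ sends $\bc_\lambda$-orbits to $\bc_{\lambda_\varepsilon}$-orbits and descends to the required isomorphism.

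For part (ii), I would invoke the complementary conclusion of \cite[Theorem 1]{ln}: for $|\varepsilon|$ small, every $\bc_{\lambda_\varepsilon(\omega_1,\omega_2)}$-orbit in $V(\omega_1,\omega_2)$ meets the unit sphere $\bs^{2r_1-1} \times \bs^{2r_2-1}$ transversally at a unique point. Because the hermitian metric on $L_i$ is inherited from the chosen $\wt{K}$-invariant metric on $V(\omega_i)$, one has $S(L) = L \cap (\bs^{2r_1-1} \times \bs^{2r_2-1})$; the $\bc_{\lambda_\varepsilon}$-invariance of $L$ then forces the unique intersection point to lie in $S(L)$, yielding the uniqueness half of the claim. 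Transversality in the ambient space is detected by surjectivity of $(d\|\cdot\|_1, d\|\cdot\|_2)$ on the orbit tangent $\bc v_{\lambda_\varepsilon}(q)$; since this tangent already lies in $T_q L$ and $S(L)$ is cut out of $L$ by the same pair of norm conditions, transversality in $V(\omega_1,\omega_2)$ is equivalent to transversality within $L$. Combined with part (i), this produces a smooth bijection $\pi_{\lambda_\varepsilon}|_{S(L)}: S(L) \lr L/\bc_{\lambda_\varepsilon}$ whose differential is everywhere an isomorphism, hence a diffeomorphism. The most delicate step is ensuring that the principal-bundle structure on the ambient space restricts cleanly to $L$; once this is verified via the transverse-slice construction above, everything else is essentially formal.
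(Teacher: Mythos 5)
Your proposal is correct and runs on the same engine as the paper's proof: view $L$ as a closed $\bc_\lambda$-invariant complex submanifold of $V(\omega_1,\omega_2)$, apply Loeb--Nicolau's Theorem 1 to the ambient action determined by $\lambda(\omega_1,\omega_2)$, and transport statements via the conjugation by $t_\varepsilon$. The organizational differences are worth noting. For (i) you realize $L/\bc_\lambda$ directly as a closed complex submanifold of the ambient quotient $V(\omega_1,\omega_2)/\bc$ and get the principal subbundle structure by a slice/local-trivialization argument; this yields (i) for $\lambda$ itself, with $t_\varepsilon$ needed only for the isomorphism $L/\bc_\lambda\cong L/\bc_{\lambda_\varepsilon}$. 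The paper instead reduces at once to $|\varepsilon|$ small and builds the Hausdorff manifold and bundle structure through the continuous retraction $\pi_{\lambda_\varepsilon}:L\lr S(L)$ obtained by restricting the ambient retraction onto $\bs^{2r_1-1}\times\bs^{2r_2-1}$, so that the identification with $S(L)$ and part (ii) come out of the same construction; your route is a bit cleaner for (i), the paper's delivers (i) and (ii) together. The one point to check is your attribution of transversality to \cite[Theorem 1]{ln}: the paper invokes that theorem only for the statement that, for $|\varepsilon|$ small, each ambient orbit is closed, properly embedded and meets the product of spheres at a unique point, and it obtains transversality separately, from Lemma \ref{transverse} for the diagonal action $\alpha_{\lambda_s}$ together with compactness of $S(L)$ (transversality is an open condition, and $\lambda_\varepsilon\to\lambda_s$). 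If the Loeb--Nicolau statement does not itself assert transversal intersection for the perturbed action, you should substitute this perturbation argument; your observation that ambient transversality and transversality inside $L$ are the same condition (surjectivity of $(d\nu_1,d\nu_2)$ on the orbit tangent, which lies in $\mathcal{T}_qL$, with $S(L)=L\cap(\bs^{2r_1-1}\times\bs^{2r_2-1})$ for the induced metrics) is correct and is exactly the identification the paper uses implicitly.
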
 
\begin{proof}
When $\lambda_u=0$, the theorem is a special case of 
Theorem \ref{cbundle}.  So assume $\lambda_u\neq 0$.

Since the $\bc$-action $\alpha_{\lambda_\varepsilon}$ is conjugate by the analytic 
automorphism $t_\varepsilon:L\lr L$ to $\alpha_\lambda$, we see that $L/\bc_\lambda\cong L/\bc_{\lambda_\epsilon}$ as a complex analytic space. Thus, it is enough to prove the theorem for $|\varepsilon|>0$ sufficiently small.   

Consider the projective embedding  $\phi_i':X_i=G_i/P_i \hookrightarrow \bp(V(\omega_i))$ defined by the ample line bundle $\bar{L}_i^\vee$.  The circle-bundle $S(L_i)\lr X_i$ is just the restriction to $X_i$ of the circle-bundle associated to the tautological bundle over $\bp(V(\omega_i))$. Thus $\phi_i'$ yields an imbedding 
$\phi_i:S(L_i)\lr \bs^{2r_i-1}$.   Let $\phi:S(L)\lr 
S(V(\omega_1,\omega_2))=\bs^{2r_1-1}\times \bs^{2r_2-1}$ be the product $\phi_1\times \phi_2$.   

Set $\lambda_{u,\varepsilon}=Ad(t_\varepsilon)\lambda_u$ so that $\lambda_{\varepsilon}=\lambda_s+\lambda_{u,\varepsilon}$. Note that, if $\beta=\sum_{\gamma\in \Phi^+}k_{\beta,\gamma}\gamma$ where $\beta\in R^+$, then $Ad(t_\varepsilon)X_\beta=\varepsilon^{|\beta|}X_\beta$ where 
$|\beta|=\sum k_{\beta,\gamma}\geq 1$.   ( Here $X_\beta\in Lie(B_u)$ denotes a weight vector 
of weight $\beta$. )
This implies that $\lambda_\varepsilon\to \lambda_s$ as $\varepsilon\to 0$, and,  furthermore,  
$\lambda_\varepsilon(\omega_1,\omega_2)\to \lambda_s(\omega_1,\omega_2)$ as $\varepsilon\to 0$.  
By \cite[Theorem 1]{ln}, for $|\varepsilon|$ sufficiently small, each $\bc$-orbit for the $\wt{\alpha}_{\lambda_\varepsilon}$-action on $V(\omega_1,\omega_2)$ 
is closed and properly imbedded in $L(\omega_1,\omega_2)$ and intersects $\bs^{2r_1-1}
\times \bs^{2r_2-1}$ at a unique point.  In particular, each orbit of the $\bc$-action corresponding to $\lambda_\varepsilon$ meets $S(L)\subset\bs^{2r_1-1}\times \bs^{2r_2-1}$ at a {\it unique} point when 
$|\varepsilon|>0$ is sufficiently small.  

Consider the map 
$\pi_{\lambda_\varepsilon}:L\lr S(L)$ which maps each 
$\alpha_{\lambda_\varepsilon}$ orbit to the unique point where it meets $S(L)$.  This is just the restriction of 
$V(\omega_1,\omega_2)\lr \bs^{2r_1-1}\times \bs^{2r_2-1}$ and hence continuous.  It follows that the 
orbit space $L/\bc_{\lambda_\varepsilon}$ is Hausdorff and that the map 
$\bar{\pi}_{\lambda_\varepsilon}:L/\bc\lr S(L)$ induced by $\pi_{\lambda_\varepsilon}$ is a homeomorphism, whose inverse is just the composition $S(L)\hookrightarrow L\lr L/\bc$. 
Since each $\bc$-orbit for $\alpha_{\lambda_s}$-action meets $S(L)$ transversely by Lemma \ref{transverse}, and since $S(L)$ is compact, the same is true for the $\alpha_{\lambda_\varepsilon}$-action provided 
$|\varepsilon|$ is sufficiently small.  For such an 
$\varepsilon$, the $\pi_{\lambda_\varepsilon}$ is a submersion and $\bar{\pi}_{\lambda_\varepsilon}$ is a diffeomorphism.  
The orbit space $L/\bc_{\lambda_\varepsilon}$ has a natural structure of a 
complex analytic space with respect to which $\pi_{\lambda_\varepsilon}$ is analytic. We have shown above that $L/\bc_{\lambda_\varepsilon}$ is a Hausdorff manifold and that   
$\pi_{\lambda_\varepsilon}$ is a submersion.  It follows that $\pi_{\lambda_\varepsilon}$ is the projection of a principal complex analytic bundle with fibre and structure group $\bc$.  \end{proof} 

Let $P'=P'_1\times P'_2$ be any parabolic subgroup of $G=G_1\times G_2$ such that $B\subset P'\subset P$, where $P=P_1\times P_2$ is an Theorem \ref{lineartype}.  
Let $L'=L'_1\times L'_2$ where $\bar{L}'_i$ is the line bundle over $X_i':=G_i/P_i'$ associated to $-\omega_i$, where $\omega_i$ is a dominant integral weight of $T_i$. 
Then $L'\lr X'$ is a $\wt{B}$-equivariant line bundle 
and so one obtains an action of $\bc_\lambda$ on $L'$ via restriction for any 
$\lambda\in Lie(\wt{B})$. Moreover, $L'$ is equivariantly isomorphic to the pull back of $L$ via 
the natural projection $X'\lr X$ where $L, X=X_1\times X_2$ are 
as in Theorem \ref{lineartype}.  In particular, assuming that $\lambda_s$ satisfies the weak hyperbolicity condition, one has a $\bc_\lambda$-equivariant 
projection $L'\lr L$.  
If $\lambda_u=0$, then, by Proposition \ref{homogstd} 
and Theorem \ref{cbundle},  $L'/\bc_\lambda$ is compact Hausdorff complex manifold and we have the following 
commuting diagram:  
\[
\begin{array}{ccc}
L' & \lr & L\\
\downarrow & &\downarrow \\
L'/\bc_\lambda &\lr & L/\bc_\lambda.\\
\end{array}\]   
However, it is not clear to us whether the orbit space $L'/\bc_\lambda$ is a Hausdorff manifold when $\lambda_u\neq 0$. 

\begin{definition} \label{lineartypedef}With notations as in Theorem \ref{lineartype}, if $\lambda_s$ is weakly hyperbolic, we shall refer to the analytic homomorphism $\alpha_\lambda:\bc\lr \wt{B}$ as {\em admissible} and the action $\alpha_\lambda$ of $\bc$  
on $L$ as {\em linear type}.  
In this case, complex structure on the manifold $S(L)$ induced from $L/\bc_{\lambda_\varepsilon}\cong L/\bc_{\lambda}$ will be said to be of {\em linear type} and the resulting complex manifold will be denoted $S_{\lambda}(L)$.   
\end{definition}

 We conclude this section with the following remarks.

\begin{remark}\label{comparision}{\em  (i) Loeb and Nicolau \cite{ln} consider more general $\bc$-actions on $\bc^m\times \bc^n$ in which the corresponding vector field is allowed to have higher order resonant terms.  In our setup we have only to consider linear 
actions---the corresponding vector fields can at most have 
terms corresponding to resonant relations of the form 
$``\lambda_i=\lambda_j".$ \\
(ii) One has a commuting diagram\\
$$\begin{array}{ccc}
L &\hookrightarrow  & V(\omega_1,\omega_2)\\
\pi_\lambda\downarrow & & \downarrow\pi_{\lambda(\omega_1,\omega_2)}\\
S_\lambda(L)&\hookrightarrow &\bs^{2r_1-1}\times \bs^{2r_2-1}\\
 \end{array}$$
 in which the horizontal maps are holomorphic and the vertical maps, projections of holomorphic principal $\bc$-bundles.\\
 (iii) We do not know if any diagonal 
 type action of $\bc$ on $L$ (with $L\lr X$ as in Definition \ref{lineartypedef}) in the sense of Definition \ref{diagonaltype} 
 is conjugate by an element of $\wt{G}$ to a linear 
 type action $\alpha_\lambda$ with unipotent part 
 $\lambda_u$ equal to zero.    
 }
\end{remark}

\section{Cohomology of $S_\lambda(L)$}

Let $p:L\lr X$ be a holomorphic principal $\bc^*$-bundle over a compact connected complex 
manifold. Denote by $\bar{p}:\bar{L}\lr X$ the 
associated line bundle (i.e., vector bundle of rank $1$).  
We identify $X$ with the image of the zero 
cross section in $\bar{L}$ and $L$ with $\bar{L}\setminus X$.   

We denote the structure sheaf of a complex analytic 
space $Y$ by $\co_Y$, or more briefly by $\co$ when $Y$ is clear from the context.  Recall that a compact subset $A\subset Y$ is called {\it Stein compact} if every 
neighbourhood of $A$ contains a Stein open subset $U$ such that $A\subset U$. 

\begin{lemma}  \label{cohenmacaulay} 
Suppose that $f:\bar{L}\lr Y$ is a 
complex analytic map where $Y$ is a Stein space 
and is Cohen-Macaulay. Suppose that  $f(X)=:A\subset Y$ is Stein compact and that $f|_L:L\lr Y\setminus A$ is a biholomorphism.  Then 
$H^q(L;\co_L)=0$ if $0<q<\dim Y-\dim A-1$, or if $q=\dim L$.  
Furthermore, $H^0(L;\mathcal{O}_L)\cong H^0(Y;\mathcal{O}_{Y})$ and the topological vector spaces 
$H^q(L;\co_L)$ are separated Fr\'echet-Schwartz 
spaces for all $q$.   
\end{lemma}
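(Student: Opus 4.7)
The plan is to transfer the cohomology computation to $Y$ via the biholomorphism $f|_L$ and to analyze $H^q(Y\setminus A;\co_Y)$ by means of the local cohomology long exact sequence, exploiting both the Stein and the Cohen-Macaulay hypotheses. First I would verify that $f:\bar L\to Y$ is proper: for any compact $K\subset Y$, the preimage $f^{-1}(K)=(f|_X)^{-1}(K\cap A)\cup (f|_L)^{-1}(K\setminus A)$ is compact, since $X$ is compact and $f|_L$ is a homeomorphism onto its image. Remmert's proper mapping theorem then makes $A=f(X)$ a closed analytic subset of $Y$, of dimension at most $\dim X=\dim L-1$, and via $f|_L$ we identify $H^q(L;\co_L)$ with $H^q(Y\setminus A;\co_Y)$ as topological vector spaces.

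The core of the argument is the long exact sequence arising from the triangle $R\Gamma_A\to R\Gamma(Y;\cdot)\to R\Gamma(Y\setminus A;\cdot)$,
\[\cdots\to H^q_A(Y;\co_Y)\to H^q(Y;\co_Y)\to H^q(Y\setminus A;\co_Y)\to H^{q+1}_A(Y;\co_Y)\to\cdots.\]
Since $Y$ is Stein, Cartan's Theorem B forces $H^q(Y;\co_Y)=0$ for $q\geq 1$, so $H^q(L;\co_L)\cong H^{q+1}_A(Y;\co_Y)$ for $q\geq 1$, while at $q=0$ the sequence collapses to $0\to H^0(Y;\co_Y)\to H^0(L;\co_L)\to H^1_A(Y;\co_Y)\to 0$. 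The Cohen-Macaulay hypothesis enters through the analytic analog of the classical vanishing of local cohomology on a Cohen-Macaulay space, $H^q_A(Y;\co_Y)=0$ for $q<c:=\dim Y-\dim A$. Substituting this into the long exact sequence gives $H^q(L;\co_L)=0$ for $0<q<c-1=\dim Y-\dim A-1$; the Hartogs-type isomorphism $H^0(Y;\co_Y)\cong H^0(L;\co_L)$ follows from the same vanishing once $c\geq 2$ (which is the case in the paper's intended application with $A$ the vertex of the affine cone).

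For the top-dimensional vanishing I would observe that $\dim L=\dim Y=:n$, so at $q=n$ the right-hand term $H^{n+1}_A(Y;\co_Y)$ vanishes automatically because any coherent sheaf on an $n$-dimensional complex space has cohomological dimension at most $n$; alternatively, one may invoke Siu's extension of Malgrange's theorem that $H^n(M;F)=0$ for every coherent $F$ on a connected non-compact $n$-dimensional complex manifold $M$, applied here to $M=L$. The Fr\'echet--Schwartz topology on each $H^q(L;\co_L)$ is the canonical one induced by \v{C}ech cohomology with respect to a countable Stein cover, and I would deduce separatedness by transporting structure through the isomorphism with $H^{q+1}_A(Y;\co_Y)$: local cohomology of $\co_Y$ with supports in a Stein compact analytic subset of a Stein Cohen-Macaulay space is a separated Fr\'echet--Schwartz space, by the analytic finiteness theorems in the tradition of Bani\c{c}\u{a}--St\u{a}n\u{a}\c{s}il\u{a} and Trautmann.

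The hardest step will be this last topological refinement. Vanishing of local cohomology in the Cohen-Macaulay range is standard, but showing that each $H^q(L;\co_L)$ is Hausdorff is equivalent to the closedness of image of a \v{C}ech coboundary, and this requires the Stein compactness hypothesis on $A$ in an essential way, via the analytic finiteness theorem for local cohomology. Making the identifications in the long exact sequence continuous (i.e., topological isomorphisms of Fr\'echet spaces rather than merely abstract ones) is the subtlest bookkeeping in the plan, but it reduces to choosing the right Fr\'echet model for $R\Gamma_A$ once the vanishing statements are in hand.
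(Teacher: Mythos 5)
Your proposal is correct and follows essentially the same route as the paper: Cartan's Theorem B on the Stein space $Y$, the depth/Cohen--Macaulay vanishing of cohomology supported on $A$ (your local-cohomology long exact sequence is just the exact-sequence form of the B\u{a}nic\u{a}--St\u{a}n\u{a}\c{s}il\u{a} restriction-isomorphism the paper cites), Stein compactness of $A$ for the separated Fr\'echet--Schwartz structure, and the Malgrange/Siu top-degree vanishing on the noncompact manifold $L$. Your explicit remark that the $H^0$-isomorphism needs $\dim Y-\dim A\geq 2$ is a small point of extra care that the paper's proof passes over silently, but it does not change the argument in the intended applications.
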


\begin{proof}  
First note that $H^q(Y;\co_{Y})=0$ unless 
$q=0$.   By \cite[Ch. II, Theorem 3.6, Corollary 3.9]{bs}, 
the restriction map 
$H^q(Y;\co)\lr H^q(Y\setminus A;\co)$ 
is an isomorphism for $0\leq q
<\textrm{depth}_A\co_{Y}=\dim X-\dim A$ the last equality in view of our hypothesis that 
$Y$ is Cohen-Macaulay.  In view of \cite[Ch. I, Theorem 2.19]{bs}, 
our hypothesis that $A\subset Y$ is Stein compact implies that $H^i(Y\setminus A;\co)\cong H^i(L,\co_L)$ is separated and Fr\'echet-Schwartz. As for any open connected complex manifold, 
$H^{\dim L}(L;\co)=0$. 
\end{proof}

Note that the hypothesis of the above lemma 
are satisfied in the case when $X$ is a smooth projective 
variety, $\bar{L}$ a line bundle such that $\bar{L}^\vee$ is 
very ample, and $Y$ is the cone $\hat{L}$ over $X$ with respect 
to the projective imbedding determined by $\bar{L}^\vee$ is 
Cohen-Macaulay at its vertex   
$a \in\hat{L}$.  In this case one says that $X$ is {\it arithmetically Cohen-Macaulay.} For example, if $X$ is the homogeneous 
variety $G/P$ where $G$ is a complex linear algebraic group over $\bc$ and $P$ a parabolic subgroup and $L$ any negative ample line bundle, then the above properties 
hold.  We will also need the fact that $\hat{L}$ is normal---that is, $X=G/P$ is {\it projectively normal} with respect to any ample line bundle.   See \cite{rr} for projective normality and  \cite{ramanathan} for arithmetic Cohen-Macaulayness, where 
these results are established for the more general case of Schubert varieties over arbitrary algebraically closed fields. 
If we assume that $\bar{L}$ itself is very ample, then it is not possible to blow-down 
$X$.  However, in this case, the following lemma allows 
one to compute the cohomology groups of $L$. 

\begin{lemma}
Let $L$ be any holomorphic principal $\bc^*$-bundle over a complex manifold $X$. Then $L\cong L^\vee$ as complex manifolds. In particular, 
$H^{p,q}_{\bar{\partial}}(L)\cong H^{p,q}_{\bar{\partial}}(L^\vee)$.  
\end{lemma}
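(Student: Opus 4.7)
The plan is to exhibit an explicit biholomorphism $\phi\colon L\to L^\vee$ defined fiberwise by inversion. Intrinsically, given a nonzero vector $v\in\bar{L}_x$ (with $x=p(v)$), the one-dimensionality of $\bar{L}_x$ means that $v$ determines a linear isomorphism $\bar{L}_x\to\bc$ by $w\mapsto w/v$, and this linear functional is an element $\phi(v)\in \bar{L}_x^\vee\setminus\{0\}=L^\vee_x$. I would first check coordinate-invariance of $\phi$; then verify holomorphy and produce an inverse (using $(L^\vee)^\vee\cong L$).

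Concretely, choose a trivializing cover $\{U_i\}$ for $\bar{L}$ with transition functions $g_{ij}\colon U_i\cap U_j\to\bc^*$. Then $L|_{U_i}\cong U_i\times\bc^*$ with gluing $(x,z_i)\sim(x,z_j)$ whenever $z_j=g_{ij}(x)z_i$, while $L^\vee|_{U_i}\cong U_i\times\bc^*$ with gluing $z_j=g_{ij}(x)^{-1}z_i$. I would define $\phi$ locally by $(x,z_i)\mapsto(x,z_i^{-1})$ and note
\[
z_j=g_{ij}(x)z_i \;\Longleftrightarrow\; z_j^{-1}=g_{ij}(x)^{-1}z_i^{-1},
\]
which is precisely the cocycle condition for $L^\vee$. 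Hence the local formulas glue to a well-defined global map $\phi\colon L\to L^\vee$, evidently holomorphic, with holomorphic inverse given by the analogous fiberwise inversion $L^\vee\to L$ (under the canonical identification $(L^\vee)^\vee\cong L$). Thus $\phi$ is a biholomorphism.

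Since Dolbeault cohomology is a biholomorphic invariant, the isomorphism $L\cong L^\vee$ of complex manifolds immediately yields $H^{p,q}_{\bar\partial}(L)\cong H^{p,q}_{\bar\partial}(L^\vee)$ for all $p,q$, completing the proof. There is no real obstacle here; the only point requiring any care is the coordinate-independence of the fiberwise inversion, which is essentially forced by the fact that the dual line bundle has transition cocycle inverse to that of the original, perfectly matched by the map $z\mapsto z^{-1}$ on $\bc^*$.
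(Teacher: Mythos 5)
Your proposal is correct and is essentially the paper's own proof: the map $v\mapsto v^\vee$ with $v^\vee(\lambda v)=\lambda$ is exactly your fiberwise inversion $w\mapsto w/v$, and the paper simply observes it is a biholomorphism. Your local-trivialization check of the cocycle compatibility is just an expanded verification of the same argument.
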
  
\begin{proof}
Let $\psi:L\lr L^\vee$ be the map $v\mapsto v^\vee$ where $v^\vee(\lambda v)=\lambda\in \bc$. Then $\psi$ is a biholomorphism.\end{proof}

Suppose that $L_i\lr X_i, i=1,2,$ are projections of holomorphic principal $\bc^*$-bundles where 
$\bar{L}_i$ are negatively ample over  
complex projective manifolds.  We assume that $X_i$ are 
arithmetically Cohen-Macaulay.  We shall apply the K\"unneth 
formula \cite{cassa} for cohomology with coefficients in coherent analytic sheaves to obtain some 
vanishing results for the cohomology groups $H^q(L;\mathcal{O}_L)$.    
  
A coherent analytic sheaf $\mathcal{F}$ on a complex analytic space $X$ is a  {\it Fr\'echet}  sheaf if 
$\mathcal{F}(U)$ is a Fr\'echet space for any open set $U\subset X$  and if the restriction homomorphism $\mathcal{F}(U)\lr \mathcal{F}(V)$ is continuous for any open set $V\subset U$.  A Fr\'echet sheaf $\mathcal{F}$ is said to be {\it nuclear} if 
$\mathcal{F}(U)$ is a nuclear space for any open set $U$ in $X$.   A Fr\'echet sheaf $\mathcal{F}$ is 
called {\it normal} if there exists a basis for $X$ 
which is a Leray cover for $\mathcal{F}$.  

If $X$ is a complex manifold,  then any coherent analytic sheaf is Fr\'echet-nuclear and normal. See \cite[p. 927]{cassa}.

Let  $V_1,V_2$ are Fr\'echet spaces.  We denote 
by $V_1\otimes_\epsilon V_2$ (resp. $V_1\otimes_\pi V_2$) the inductive (resp. projective) topological tensor 
product of $V_1$ and $V_2$.  If $V_1$ is nuclear, 
then $V_1\otimes_\pi V_2=V_1\otimes _\epsilon V_2$ 
The completed inductive topological tensor product 
tensor product will be denoted $V_1\hat{\otimes}V_2$. 
For a detailed exposition on nuclear spaces see 
\cite{pietsch}. 

One has the notion of completed tensor product $\mathcal{F}\hat{\otimes}\mathcal{G}$ of  coherent analytic Fr\'echet nuclear sheaves $\mathcal{F}, \mathcal{G}$. 
For example, 
the structure sheaf of an analytic space is Fr\'echet nuclear and $\mathcal{O}_{X\times Y}=pr_X^*\mathcal{O}_X \hat{\otimes}pr_Y^*\mathcal{O}_Y$, where $pr_X$ denotes the projection $X\times Y\lr X$. 

We apply the K\"unneth formula for Fr\'echet-nuclear 
normal sheaves established by A. Cassa \cite[Teorema 3]{cassa} to obtain the following

\begin{theorem} \label{cohomologyofl}
(i) Suppose that $L=L_1\times L_2$ where $L_i\lr X_i$ 
are $\bc^*$-bundles over connected complex compact manifolds 
$X_i$ of dimension $\dim X_i\geq 1$. Suppose that, for $i=1,2$,  $\mathcal{F}_i$ is a coherent analytic sheaf over $L_i$ 
such that $H^q(L_i;\mathcal{F}_i), q\geq 0,$ are 
Hausdorff. Then 
\[
H^q(L;\mathcal{F}_1\hat{\otimes}\mathcal{F}_2)\cong \sum_{k+l=q}H^k(L_1;\mathcal{F}_1)\hat{\otimes} H^l(L_2;\mathcal{F}_2)                   \eqno(6)
\] 
for $q\geq 0$.\\  
(ii) Assume that  $X_i$ are projective manifolds and $\bar{L}_i^\vee$ 
are very ample line bundles such that the $\hat{L}_i$  are
Cohen-Macaulay.Then 
$H^q(L;\mathcal{O}_L)=0$  except possibly when $q=0, \dim X_1,\dim X_2, \\ ~\dim X_1+\dim X_2$.  Also, 
$H^0(L;\mathcal{O}_L)\cong H^0(L_1;\mathcal{O}_{L_1})\hat{\otimes}H^0(L_2;\mathcal{O}_{L_2})$ 
\end{theorem}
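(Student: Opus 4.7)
The plan is to prove part (i) directly from Cassa's K\"unneth theorem and then deduce part (ii) by combining (i) with Lemma \ref{cohenmacaulay} applied to each factor.

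For part (i), since $X_i$ (hence $L_i$) are complex manifolds, the paper has already noted that every coherent analytic sheaf on $L_i$ is Fr\'echet-nuclear and normal. Thus $\mathcal{F}_1$ and $\mathcal{F}_2$ meet the sheaf-theoretic hypotheses of Cassa \cite[Teorema 3]{cassa}, and the assumption that all the cohomology groups $H^q(L_i;\mathcal{F}_i)$ are Hausdorff is precisely the separatedness hypothesis needed in that theorem. Therefore Cassa's statement gives the asserted isomorphism (6) immediately. The main thing to check is that the completed tensor product $\mathcal{F}_1\hat{\otimes}\mathcal{F}_2$ pulled back under the two projections coincides with the sheaf for which Cassa's formula is written; this is standard for Fr\'echet-nuclear coherent sheaves, and is exactly the same identification as the paper already uses for the structure sheaf: $\mathcal{O}_{L_1\times L_2}= pr_1^*\mathcal{O}_{L_1}\hat{\otimes}pr_2^*\mathcal{O}_{L_2}$.

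For part (ii), I would first apply Lemma \ref{cohenmacaulay} to each factor. Take $Y=\hat{L}_i$ (the affine cone over $X_i$ with respect to the very ample $\bar{L}_i^\vee$), $A=\{a_i\}$ the cone point, and $f:\bar{L}_i\lr \hat{L}_i$ the tautological blow-down of the zero section; the hypotheses of the lemma are satisfied because $\hat{L}_i$ is an affine (hence Stein) Cohen-Macaulay space by assumption, $f|_{L_i}$ is a biholomorphism onto $\hat{L}_i\setminus\{a_i\}$, and a point is trivially Stein compact. Since $\dim \hat{L}_i=\dim X_i+1$ and $\dim A=0$, the lemma yields
\[
H^q(L_i;\mathcal{O}_{L_i})=0\quad \text{for } 0<q<\dim X_i,\ \text{and for } q=\dim L_i=\dim X_i+1,
\]
so the only possibly non-vanishing groups occur at $q=0$ and $q=\dim X_i$; moreover the lemma guarantees that each $H^q(L_i;\mathcal{O}_{L_i})$ is a separated Fr\'echet--Schwartz space, in particular Hausdorff.

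Having verified the Hausdorffness hypothesis, I then feed this into part (i) with $\mathcal{F}_i=\mathcal{O}_{L_i}$:
\[
H^q(L;\mathcal{O}_L)\cong \sum_{k+l=q} H^k(L_1;\mathcal{O}_{L_1})\hat{\otimes} H^l(L_2;\mathcal{O}_{L_2}).
\]
On the right-hand side the only surviving pairs $(k,l)$ are those with $k\in\{0,\dim X_1\}$ and $l\in\{0,\dim X_2\}$, so the total degree $q=k+l$ can only take the values $0,\,\dim X_1,\,\dim X_2,\,\dim X_1+\dim X_2$, as claimed. The identification of $H^0(L;\mathcal{O}_L)$ with $H^0(L_1;\mathcal{O}_{L_1})\hat{\otimes}H^0(L_2;\mathcal{O}_{L_2})$ is the $q=0$ special case, where the decomposition has only the single term $k=l=0$.

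The only subtle point I foresee is ensuring that Cassa's K\"unneth formula is applicable with the precise separatedness hypothesis furnished by Lemma \ref{cohenmacaulay}; once this is in place, everything else is a bookkeeping exercise in which degrees can only be summed from the two-element sets $\{0,\dim X_i\}$. No further geometric input about the flag variety structure of $X_i$ is needed for this theorem beyond arithmetic Cohen--Macaulayness, which is why the statement is phrased at this level of generality.
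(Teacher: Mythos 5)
Your proposal is correct and follows essentially the same route as the paper: part (i) is exactly the appeal to Cassa's K\"unneth theorem, and part (ii) obtains the per-factor vanishing and separatedness and then reads off the surviving degrees from (6). The only cosmetic difference is that you route the factorwise statements through Lemma \ref{cohenmacaulay} (applied to the cone $\hat{L}_i$ with vertex $\{a_i\}$, exactly as the remark after that lemma indicates), whereas the paper cites the B\u{a}nic\u{a}--St\u{a}n\u{a}\c{s}il\u{a} results directly; your version has the small advantage of making the top-degree vanishing $H^{\dim L_i}(L_i;\mathcal{O}_{L_i})=0$ explicit, which is needed to rule out degrees such as $\dim X_i+1$.
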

\begin{proof}  (i) The isomorphism (6)  follows from the K\"unneth formula \cite[Theorema 3]{cassa}.

\noindent (ii)  Let $a_i$ denote the vertex of the cone $\hat{L}_i$ over 
$X_i$ with respect to the projective imbedding 
determined by $L_i^\vee$.  Then $\hat{L}_i$ is an affine 
variety and hence Stein. By \cite[Corollary 2.21, Chapter I]{bs} the cohomology groups 
$H^q(L_i;\mathcal{O}_{L_i})=H^q(\hat{L}_i
\setminus\{a_i\};\mathcal{O}_{\hat{L}_i}), q\geq 0,$ are 
separated and Fr\'echet-Schwartz. Since, by hypothesis 
$\hat{L}_i$ is Cohen-Macaulay at $a_i$, we have $H^q(L_i,\mathcal{O}_{L_i})\cong H^q(\hat{L}_i;\mathcal{O}_{\hat{L}_i})=0$ if $0<q<\dim X_i$ by 
\cite[Theorem 3.6, Chapter II]{bs}.   
The rest of the theorem now follows readily from 
(6).\end{proof}

\begin{remark}{\em  (i)
We remark that the vanishing of the cohomology groups  
$H^q(L;\mathcal{O}_L)$ for $0<q<\min\{\dim X_1,\dim  X_2\}$ in  Theorem \ref{cohomologyofl} (ii) follows from \cite[Ch. I, Theorem 3.6]{bs}. To see this, 
set  $\hat{L}:=\hat{L}_1\times \hat{L}_2\setminus A$ where $A$ is the closed analytic space 
$A=\hat{L}_1\times \{a_2\}\cup \{a_1\}\times \hat{L}_2$. 
The ideal $\mathcal{I}\subset \mathcal{O}_{\hat{L}}$ 
of $A$ equals $\mathcal{I}_1. \mathcal{I}_2$ where 
$\mathcal{I}_1, \mathcal{I}_2$ are the ideals of the components $A_1:=\hat{L}_1\times\{a_2\}, A_2:= \{a_1\}\times \hat{L}_2$ of $A$.  Then  
depth$_A\mathcal{O}_L=\textrm{depth}_{\mathcal{I}}\mathcal{O}_{\hat{L}}=\min_j\{ \textrm{depth}_{\mathcal{I}_j}\mathcal{O}_{\hat{L}}\}=\min_j\{ 
\textrm{depth}_{a_j}\mathcal{O}_{\hat{L}_j}\}
=\min\{\dim X_1+1,\dim X_2+1\}$.
Thus we see that depth$_A\mathcal{O}_{\hat{L}}=\min\{\dim X_1+1,\dim X_2+1\}$. 
Therefore 
$H^q(\hat{L}_1\times \hat{L}_2;\mathcal{O}_{\hat{L}_1\times \hat{L}_2})\cong H^q(L;\mathcal{O}_L)$ if 
$q<\min\{\dim X_1,\dim X_2\}$ by \cite[Ch. I, Theorem 3.6]{bs} where the isomorphism is induced by the inclusion. 
Since $\hat{L}_1\times \hat{L}_2$ is Stein, the 
cohomology groups $H^q(L;\mathcal{O}_L)$ vanish for 
$1\leq q<\min\{\dim X_1,\dim X_2\}$. \\

(ii) When $\pi_1:L_1\lr X_1$ is an {\it algebraic} $\bc^*$-bundle over a smooth projective variety $X_i$, 
one has an isomorphism of quasi-coherent {\it algebraic}  sheaves $\pi_{1,*}(\mathcal{O}^\alg_{L_i})\cong \oplus_{k\in \bz}\bar{L}_i^k$ of $\mathcal{O}_X$-modules.  (By the GAGA principle, $H^*_\alg(X_1;\bar{L}^k_1)\cong H^*(X_1;\bar{L}_1^k)$ for all $k\in \bz$.) 
Therefore the algebraic cohomology groups $H^q_\alg(L_i;\mathcal{O}^\alg)$ can be calculated as 
$H^q_\alg(L_1;\mathcal{O}^\alg_{L_1})\cong H^q(X_1;\pi_{1,*} 
\mathcal{O}_{L_1})\cong \oplus_{k\in \bz} H^q(X_1;\bar{L}^k)$.  If $X_1$ is a flag variety and $\bar{L}_1,$ 
negative ample, then it is known that $H^q(X_1;\bar{L}_1^k)=0$ except when $k> 0$ (resp. $k\leq 0$) and $q=\dim X_1$ (resp. $q=0$).  Furthermore, $H^0(X_1;\bar{L}_1^k)\cong H^{\dim X_1}(X_1;\bar{L}_1^{-k})^\vee$ for $k<0$.  Hence 
$H^q_\alg(L_1;\mathcal{O}^\alg)=0$ unless $q=0,$ or 
$q=\dim X_1$. Now $H^{\dim X_1}_\alg(L_1;\mathcal{O}^\alg)\cong\oplus_{k>0}H^{\dim X_1}(X_1;\bar{L}_1^k)$ and  
$H^0_\alg(L_1;\mathcal{O}^\alg)\cong \oplus_{k\leq 0}(X_1;\bar{L}^{k}_1)$.     We do not know the relation between $H^{\dim X_1}(L_1;\mathcal{O})$ and $H^{\dim X_1}
_\alg(L_1;\mathcal{O}_{L_1}^\alg)$.
}
\end{remark}

Suppose that $\alpha_\lambda$ is an admissible 
$\bc$-action on $L\lr X$ of  scalar type, or diagonal type, or linear type.  It is understood that in the case of diagonal 
type, there is a standard $T_i$-action on $L_i\lr X_i, i=1,2$, and that, in the case of linear type action, $X_i=G_i/P_i$ and $\bar{L}_i$ is negative ample.  Here, and in what follows, the groups $G_i, i=1,2,$ are semi simple and $P_i\subset G_i$ any parabolic subgroups, unless otherwise explicitly stated. 

Denote by $\gamma_\lambda$ (or more briefly $\gamma$) the holomorphic vector field on $L$ associated to the $\bc$-action.  Thus the $\bc$-action is just the flow associated to $\gamma$.  We shall denote by $\mathcal{O}_\gamma^{\textrm{tr}}$ 
the sheaf of germs of local holomorphic functions which are constant along the $\bc$-orbits. Thus 
$\mathcal{O}_\gamma^{\textrm{tr}}$ is isomorphic to $\pi_\lambda^*(\mathcal{O}_{S_\lambda(L)})$.  One has an exact sequence of sheaves 
\[0\to \mathcal{O}_\gamma^{\textrm{tr}}\to \mathcal{O}_L\stackrel{\gamma}{\to}\mathcal{O}_L\to 0.
\eqno(7)\]

Since the fibre of $\pi_\lambda:L\lr S_\lambda(L)$ is Stein, we see that $H^q(L; \mathcal{O}_\gamma^{\textrm{tr}})\cong H^q(S_\lambda(L);\mathcal{O}_{S_\lambda(L)})$ for all $q$.  Thus, 
the exact sequence (7) leads to the following long exact sequence:
\[ 0\to H^0(S_\lambda(L);\mathcal{O}_{S_\lambda(L)})
\to H^0(L;\mathcal{O}_L)\to H^0(L;\mathcal{O}_L)
\lr H^1(S_\lambda(L);\mathcal{O}_{S_\lambda(L)})\to \cdots \]
\[ 
\to H^{q}(S_\lambda(L);\mathcal{O}_{S_\lambda(L)})
\to H^q(L;\mathcal{O}_L)\to H^q(L;\mathcal{O}_L)
\lr H^{q+1}(S_\lambda(L);\mathcal{O}_{S_\lambda(L)})\to \cdots .
\eqno(8)
\]

\begin{theorem}\label{cohomologyofs} With the 
above notations, 
suppose that the $L_i$ satisfy the hypotheses of Theorem \ref{cohomologyofl}(ii) and that $\alpha_\lambda$ is an  admissible $\bc$-action on $L$ of scalar or diagonal or linear type.  Suppose that $1\leq \dim X_1\leq \dim X_2.$  
Then $H^q(S_\lambda(L);\mathcal{O})=0$ provided $q\notin \{0,1, \dim X_i,\dim X_i+1, \dim X_1+\dim X_2, \dim X_1+\dim X_2+1; i=1,2\}.$
Moreover one has $\bc\subset H^1(S_\lambda(L);\mathcal{O})$,  given by 
the constant functions in $H^0(L;\mathcal{O})$.
\end{theorem}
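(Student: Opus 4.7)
The plan is to combine the long exact sequence (8) with the vanishing result for $H^q(L;\mathcal{O}_L)$ established in Theorem \ref{cohomologyofl}(ii). By that theorem, the group $H^q(L;\mathcal{O}_L)$ vanishes except possibly when $q$ lies in the set $A:=\{0,\dim X_1,\dim X_2,\dim X_1+\dim X_2\}$. The fragment
$$H^{q-1}(L;\mathcal{O}_L)\lr H^q(S_\lambda(L);\mathcal{O})\lr H^q(L;\mathcal{O}_L)$$
of (8) then forces $H^q(S_\lambda(L);\mathcal{O})=0$ whenever both $q$ and $q-1$ lie outside $A$. Since $A\cup(A+1)$ is precisely the list of allowed values in the statement, this establishes the first assertion.

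For the second claim I would look at the initial segment of (8),
$$0\lr H^0(S_\lambda(L);\mathcal{O})\lr H^0(L;\mathcal{O}_L)\stackrel{\gamma}{\lr}H^0(L;\mathcal{O}_L)\lr H^1(S_\lambda(L);\mathcal{O}),$$
so that it suffices to show the constant function $1\in H^0(L;\mathcal{O}_L)$ is not in the image of $\gamma$; the constants will then inject into $\textrm{coker}(\gamma)$, which in turn injects into $H^1(S_\lambda(L);\mathcal{O})$. The key tool is a bigrading argument. By Lemma \ref{cohenmacaulay}, $H^0(L_i;\mathcal{O}_{L_i})\cong H^0(\hat{L}_i;\mathcal{O})$, and since the scalar $\bc^*$-action on the affine cone $\hat{L}_i$ is contracting to the vertex, the induced weight decomposition is supported in nonnegative degrees with $\bc$ as its zero-degree piece. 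Combined with the K\"unneth isomorphism of Theorem \ref{cohomologyofl}(ii), this yields a bigrading on $H^0(L;\mathcal{O}_L)$ whose $(0,0)$-component is exactly the constants $\bc$.

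The crucial step is to verify that the holomorphic vector field $\gamma$ preserves this bigrading. In the diagonal (and scalar) case, $\gamma\in \textrm{Lie}(T)$, and the structure $\bc^*\times \bc^*$ of $L$ sits inside $T$ via the diagonals of $T_1$ and $T_2$, so it commutes with $\gamma$. In the linear case, write $\gamma=\lambda_s+\lambda_u$ with $\lambda_s\in \textrm{Lie}(\wt{T})$ and $\lambda_u\in \textrm{Lie}(\wt{B}_u)\subset \textrm{Lie}(G_1\times G_2)$; the structure $\bc^*\times \bc^*$ of $L$ corresponds to the central factors of $\wt{G}_1\times\wt{G}_2=(G_1\times\bc^*)\times(G_2\times\bc^*)$, and hence commutes with both $\lambda_s$ and $\lambda_u$. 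Since $\gamma$ is a derivation, it annihilates the constant $(0,0)$-piece, so the $(0,0)$-component of $\gamma(f)$ vanishes for every $f\in H^0(L;\mathcal{O}_L)$. Consequently $1\notin \textrm{image}(\gamma)$, giving $\bc\subset H^1(S_\lambda(L);\mathcal{O})$.

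The main obstacle is the verification that $\gamma$ preserves the bigrading in the linear type case; this reduces to the essentially tautological observation that the structure group $\bc^*$ of $L_i$ lies in the centre of $\wt{G}_i$, and therefore commutes with the unipotent contribution $\lambda_u$.
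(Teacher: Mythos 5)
Your proposal is correct, and while the first (vanishing) assertion is handled exactly as in the paper --- the fragment $H^{q-1}(L;\mathcal{O}_L)\to H^q(S_\lambda(L);\mathcal{O})\to H^q(L;\mathcal{O}_L)$ of (8) together with Theorem \ref{cohomologyofl}(ii) --- your proof that the constant $1$ is not in the image of $\gamma_*$ is genuinely different from the paper's. The paper argues geometrically: if $\gamma(f)=1$, then $f$ restricted to each orbit is $z\mapsto z+f(p)$, so $Z(f)=f^{-1}(0)$ meets every orbit exactly once and maps biholomorphically onto $S_\lambda(L)$; this exhibits a positive-dimensional compact analytic submanifold inside the Stein cone $\hat{L}$, a contradiction. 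You instead use a weight argument: decompose holomorphic functions on $L$ under the structure torus (concretely, the $(0,0)$-Fourier projection with respect to the compact torus $\bs^1\times\bs^1$ in the structure group $\bc^*\times\bc^*$ suffices, so you do not even need the cone or the K\"unneth identification for this step), observe that $\gamma$ preserves weights because its flow commutes with the principal $\bc^*\times\bc^*$-action, and note that weight-$(0,0)$ functions are invariant under the structure group, hence descend to the compact connected base $X$ and are constants, which $\gamma$ kills; so $(\gamma f)_{0,0}=\gamma(f_{0,0})=0\neq 1$. The one point I would phrase more carefully is the commutation itself: the structure group does not literally sit inside $T$ (resp.\ coincide with the central $\bc^*$-factors of $\wt{G}_i$) --- the diagonal, resp.\ central, subgroup acts only through a $d$-fold covering of it --- but the needed fact is simply that the bundles are equivariant: in the diagonal case the $T_i$-action commutes with fibrewise scalar multiplication by hypothesis, and in the linear case the $\wt{G}$-action (including $\lambda_u$) is the restriction of a linear action on $V(\omega_1)\times V(\omega_2)$, which commutes with scalars on each factor. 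With that justification your argument is complete; it buys an elementary, purely representation-theoretic proof of the key non-surjectivity, whereas the paper's proof trades this for a short appeal to Steinness of $\hat{L}$ (which is anyway available under the hypotheses of Theorem \ref{cohomologyofl}(ii)).
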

\begin{proof} The only assertion which remains to be explained is 
that the constant function $1$ is not in the image of 
$\gamma_*:H^0(L;\mathcal{O})\lr  H^0(L;\mathcal{O}).$  All other assertions follow trivially from the long exact 
sequence (8) above and Theorem \ref{cohomologyofl}. 

Suppose that $f:L\lr \bc$ is such that $\gamma(f)=1$. This means that $\frac{d}{dz}|_{z=0}(f\circ \mu_p)(z)=1$ 
for all $p\in L, z\in \bc,$ where $\mu_p:\bc\lr L$ is the 
map $z\mapsto \alpha_\lambda(z).p=z.p$. Since $\mu_{w.p}(z)=z.(w.p)=(z+w).p=\mu_p(z+w)$, it follows that $\frac{d}{dz}|_{z=w}(f\circ \mu_p)= 1 ~\forall w\in \bc$. Hence $f\circ\mu_p(z)=z+f(p)$.  This means that the complex hypersurface $Z(f):=f^{-1}(0)\subset L$ meets each fibre at exactly one point.  It follows that the projection $L\lr S_\lambda(L)$ restricts to a  bijection  
$Z(f)\lr S_\lambda(L)$.  

In fact, since 
$\gamma(f)\neq 0$ we see that $Z(f)$ is smooth and since $\gamma_p$ is tangent to the fibres of the projection $L\lr S_\lambda(L)$ for all $p\in Z(f)$,  we see that  the bijective morphism of complex analytic manifolds $Z(f)\lr  S_\lambda(L)$ is an immersion.  It follows that 
$Z(f)\lr S_\lambda(L)$ is a biholomorphism. 
Thus $Z(f)$ is a {\it compact} complex analytic sub manifold of $L\subset \hat{L}.$  Since $\hat{L}$ is Stein, this is a contradiction.   \end{proof}

Our next result concerns the Picard group of $S_\lambda(L)$.

\begin{proposition}\label{picard}
Let $L_i\lr X_i$ be as in Theorem \ref{cohomologyofl} (ii). Suppose that $X_i$ is simply connected.
Then $Pic^0(S_\lambda(L))\cong \bc^l$ for some $l\geq 1$. 
\end{proposition}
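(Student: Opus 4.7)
The plan is to analyze $Pic^0(S)$, where $S:=S_\lambda(L)$, via the exponential sheaf sequence $0\to \bz\to \co_S\to \co_S^*\to 0$. Its associated cohomology long exact sequence
\[
H^1(S;\bz)\to H^1(S;\co)\to Pic(S)\stackrel{c_1}{\lr} H^2(S;\bz)
\]
identifies $Pic^0(S)=\ker(c_1)$ with the cokernel of $H^1(S;\bz)\to H^1(S;\co)$. So the strategy is first to show $H^1(S;\bz)=0$, giving $Pic^0(S)\cong H^1(S;\co)$, and then to show $H^1(S;\co)\cong \bc^l$ for some $l\geq 1$.

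For the vanishing of $H^1(S;\bz)$: since $S$ is diffeomorphic to $S(L_1)\times S(L_2)$, by K\"unneth it suffices to prove $H^1(S(L_i);\bz)=0$ for each $i$. I would apply the Gysin sequence
\[
H^1(X_i;\bz)\to H^1(S(L_i);\bz)\to H^0(X_i;\bz)\stackrel{\cup c_1(\bar{L}_i)}{\lr} H^2(X_i;\bz)
\]
for the $\bs^1$-bundle $S(L_i)\to X_i$: simple-connectedness of $X_i$ gives $H^1(X_i;\bz)=0$, while $c_1(\bar{L}_i)\in H^2(X_i;\bz)$ is non-torsion since $\bar{L}_i^\vee$ is very ample (so $c_1(\bar{L}_i)$ is minus the restriction of a hyperplane class). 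Hence the cup-product map is injective and $H^1(S(L_i);\bz)=0$.

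To compute $H^1(S;\co)$, I would use the long exact sequence (8) to extract
\[
0\to \mathrm{coker}(\gamma_*|_{H^0(L;\co)})\to H^1(S;\co)\to \ker(\gamma_*|_{H^1(L;\co)})\to 0,
\]
where $\gamma=\lambda_s+\lambda_u$ is the holomorphic vector field defining the $\bc$-action, with $\lambda_s$ weakly hyperbolic and $\lambda_u$ a commuting (possibly zero) nilpotent. The analysis proceeds via the $\wt{T}$-weight decomposition: each space $H^q(L;\co)$ is a direct sum of finite-dimensional $\wt{T}$-weight spaces, on which $\lambda_s$ acts by the scalar $\mu(\lambda_s)=\sum\mu_j\lambda_j$ while $\lambda_u$ acts by a commuting locally nilpotent endomorphism. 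Standardness places the $\wt{T}_i$-weights of $H^0(L_i;\co)$ in a non-negative orthant (so $\mu_i(\lambda_s^{(i)})\in C_i$) and those of $H^1(L_i;\co)$ in a strictly negative orthant (so $\mu_i(\lambda_s^{(i)})\in -C_i$). Weak hyperbolicity $C_1\cap(-C_2)=\{0\}$ and the strict convexity of each $C_i$ then force $\mu(\lambda_s)\neq 0$ for every non-zero weight contributing to $H^0(L;\co)$ or $H^1(L;\co)$. Consequently $\gamma_*$ is invertible on each non-zero weight space, while on the $0$-weight space of $H^0(L;\co)$, which is just the constants $\bc$, the derivation $\gamma_*$ vanishes. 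This yields $\mathrm{coker}(\gamma_*|_{H^0(L;\co)})\cong\bc$ and $\ker(\gamma_*|_{H^1(L;\co)})$ finite-dimensional, so $H^1(S;\co)\cong\bc^l$ for some $l\geq 1$.

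The main technical obstacle is the weight analysis of $H^1(L;\co)$ when $\dim X_i=1$ for some $i$ (so $H^1(L;\co)\neq 0$); there one uses the decomposition $H^1(L_i;\co)\cong\bigoplus_k H^1(X_i;\bar{L}_i^{-k})$ to place the relevant weights in a strictly negative cone compatible with weak hyperbolicity. When $\dim X_i\geq 2$ for both $i$, Theorem \ref{cohomologyofl}(ii) gives $H^1(L;\co)=0$, the kernel term drops out, and one obtains $l=1$ directly.
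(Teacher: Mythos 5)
Your topological half is fine and matches the paper: the paper also deduces $H^1(S_\lambda(L);\bz)=0$ from simple connectedness of the $X_i$ and the non-torsion class $c_1(\bar{L}_i)$ (via the Serre spectral sequence for the $\bs^1\times\bs^1$-bundle $S(L)\lr X_1\times X_2$, which is the same computation as your Gysin/K\"unneth argument), and then uses the exponential sequence to get $Pic^0(S_\lambda(L))\cong H^1(S_\lambda(L);\co)$.

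The analytic half of your argument, however, has a genuine gap and is also doing far more work than the statement requires. You treat $H^q(L;\co)$ as an honest direct sum of finite-dimensional $\wt{T}$-weight spaces and conclude that $\gamma_*$ is invertible on the complement of the invariant part because it is invertible on each weight space. For the noncompact total space $L$ the analytic cohomology is only a \emph{completed} (topological) direct sum of weight spaces -- the paper explicitly declines to identify $H^{\dim X_1}(L_1;\co)$ with its algebraic counterpart $\bigoplus_k H^{\dim X_1}(X_1;\bar{L}_1^k)$ -- and weight-wise invertibility does not give surjectivity, or even a closed image, on the completion without small-divisor estimates (the fact that $b\cdot{\bf m}\to\infty$ under weak hyperbolicity). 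That convergence analysis is exactly the delicate content of the paper's proof of Theorem \ref{picard0}, and it is carried out there only under the additional flag-variety hypotheses; asserting $\mathrm{coker}(\gamma_*|_{H^0})\cong\bc$ and finite-dimensionality of $\ker(\gamma_*|_{H^1})$ in the generality of Proposition \ref{picard} is not justified by your sketch. None of this is needed: since $S_\lambda(L)$ is a compact complex manifold, $H^1(S_\lambda(L);\co)$ is automatically finite-dimensional, so it is $\bc^l$ for some $l\geq 0$; and $l\geq 1$ is exactly the last assertion of Theorem \ref{cohomologyofs}, where the nonvanishing is proved not by a weight computation but by a geometric argument -- a holomorphic $f$ with $\gamma(f)=1$ would give a compact complex hypersurface $Z(f)\cong S_\lambda(L)$ inside the Stein space $\hat{L}_1\times\hat{L}_2$, a contradiction. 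If you want to keep your weight-theoretic route to ``the constants are not in the image of $\gamma_*$,'' you must at least justify that the projection onto the $\lambda_s$-invariant part (e.g.\ by averaging over the compact torus) is continuous and commutes with $\gamma_*$ on the completed spaces; as written, that step is missing.
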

\begin{proof}
Since $\bar{L}_i$ is negative ample, $c_1(\bar{L}_i)\in H^2(X_i;\bz)$ is a non-torsion element.   
Clearly $H^1(S_\lambda(L);\mathbb{Z})=0$ by a straightforward argument involving the Serre spectral sequence associated to the principal $\bs^1\times \bs^1$-bundle with projection $S(L)\lr X_1\times X_2$.  Using the exact sequence $0\to \mathbb{Z}\to \mathcal{O}\to \mathcal{O}^*\to 1$  
we see that $Pic^0(S_\lambda(L))\cong H^1(S_\lambda(L);\mathcal{O})\cong \bc^l$.  Now $l\geq 1$ 
by Theorem \ref{cohomologyofs}.  \end{proof}

The above proposition is applicable when $X_i=G_i/P_i$ and $\bar{L}_i$ are negative ample.  However, in this case we have the following stronger result.

\begin{theorem} \label{picard0}  
Let  $X_i=G_i/P_i$ where $G_i$ is semi simple and $P_i$ is any parabolic subgroup and let $\bar{L}_i\lr X_i$ be a  negative line bundle, $i=1,2$.  
We assume that, when $X_i=\bp^1$, the bundle  
$\bar{L}_i$ is a generator of $Pic(X_i)$.   Then 
$Pic^0(S_\lambda(L))\cong \bc$. 
If the $P_i$ are maximal parabolic subgroups and the $L_i$ are generators of $Pic(X_i)\cong \mathbb{Z}$, then $Pic(S_\lambda(L))\cong Pic^0(S_\lambda(L))\cong \bc$. 
\end{theorem}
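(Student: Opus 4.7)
The plan is to reduce both assertions to the long exact sequence~(8) and then analyze the operator $\gamma$ on $H^*(L;\co)$ by decomposing into $\wt{T}$-weight spaces and invoking weak hyperbolicity. First I would note that since $X_i=G_i/P_i$ is simply connected and $c_1(\bar{L}_i)\neq 0$, the Leray-Serre spectral sequence for the torus bundle $\bs^1\times\bs^1\to S(L)\to X_1\times X_2$ gives $H^1(S(L);\bz)=0$; the exponential sheaf sequence then reduces the first assertion to showing $H^1(S_\lambda(L);\co)\cong\bc$. The exact sequence~(8) fits this into a short exact sequence
\[0\to\textrm{coker}\bigl(\gamma\colon H^0(L;\co)\to H^0(L;\co)\bigr)\to H^1(S_\lambda(L);\co)\to\ker\bigl(\gamma\colon H^1(L;\co)\to H^1(L;\co)\bigr)\to 0,\]
and Theorem~\ref{cohomologyofs} already pushes a copy of $\bc$ (the constants) into the cokernel, so the task is to prove that this is \emph{all} of the cokernel and that the kernel vanishes.

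For the cokernel step, I would use that $H^0(L;\co)\cong H^0(L_1;\co)\,\hat{\otimes}\,H^0(L_2;\co)$ is the Fr\'echet-nuclear completion of $\bigoplus_{k_1,k_2\geq 0}V(k_1\omega_1)^\vee\otimes V(k_2\omega_2)^\vee$, since $\hat{L}_i$ is Stein and normal with homogeneous coordinate ring $\bigoplus_{k\geq 0}V(k\omega_i)^\vee$. Writing $\lambda=\lambda_s+\lambda_u$ with $\lambda_u$ nilpotent and commuting with $\lambda_s$, the operator $\gamma$ preserves the bi-grading, and on each finite-dimensional piece $V(k_1\omega_1)^\vee\otimes V(k_2\omega_2)^\vee$ its spectrum agrees with that of $\lambda_s$. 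In the weight basis, $\lambda_s$ acts on the $(\mu,\nu)$-line by $\pm(\lambda_\mu+\lambda_\nu)$, cf.~Equation~(4). For $(k_1,k_2)\neq(0,0)$, at least one of $\lambda_\mu\in C_1$ and $\lambda_\nu\in C_2$ is a \emph{strictly positive} integer combination of the $\lambda_j$ (the coefficients $d_{\cdot,j}$ of~(3) are all positive); since $C_1$ and $C_2$ lie in the closed upper half plane and meet only at the origin, $\lambda_\mu+\lambda_\nu=0$ is impossible. Hence $\gamma$ is invertible on every graded piece save $V(0)^\vee\otimes V(0)^\vee=\bc$, giving cokernel exactly $\bc$.

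For the kernel of $\gamma$ on $H^1(L;\co)$: if $\dim X_i\geq 2$ for $i=1,2$, then $H^1(L;\co)=0$ by Theorem~\ref{cohomologyofl}(ii) and we are done. The remaining cases all have some $X_i=\bp^1$, in which the generator hypothesis forces $\bar{L}_i=\co(-1)$ so that $\hat{L}_i=\bc^2$. K\"unneth, Theorem~\ref{cohomologyofl}(i), decomposes $H^1(L;\co)$ as $H^1(L_1;\co)\hat{\otimes}H^0(L_2;\co)\oplus H^0(L_1;\co)\hat{\otimes}H^1(L_2;\co)$. The $\wt{T}_i$-weights occurring in $H^1(L_i;\co)$ come from $H^1(\bp^1;\bar{L}_i^{\,k})$ with $k\geq 2$ and are therefore nonzero; combined with any weight from the $H^0$-factor, the same weak hyperbolicity computation forbids the eigenvalue from being zero, so $\gamma$ is injective. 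This proves $H^1(S_\lambda(L);\co)\cong\bc$, hence $\textrm{Pic}^0(S_\lambda(L))\cong\bc$.

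For the last assertion, under the stronger hypothesis that the $P_i$ are maximal parabolic and the $\bar{L}_i$ generate $\textrm{Pic}(X_i)\cong\bz$, the transgression $d_2\colon H^1(\bs^1;\bz)\to H^2(X_i;\bz)$ in the Leray-Serre sequence for $S(L_i)\to X_i$ is an isomorphism, giving $H^1(S(L_i);\bz)=H^2(S(L_i);\bz)=0$; K\"unneth then yields $H^2(S(L);\bz)=0$, so the Chern class map $c_1\colon\textrm{Pic}(S_\lambda(L))\to H^2(S_\lambda(L);\bz)$ in the exponential sequence is zero and $\textrm{Pic}=\textrm{Pic}^0\cong\bc$. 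The hardest part of the argument is the Fr\'echet-nuclear bookkeeping in the cokernel step: one must verify that the $\wt{T}$-weight decomposition of $H^0(L;\co)$ is an internal topological direct sum, so that the cokernel of the diagonal operator $\gamma$ reduces to the cokernel on each finite-dimensional graded piece. This uses projective normality and Cohen-Macaulayness of the flag cones $\hat{L}_i$ in an essential way.
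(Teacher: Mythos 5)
Your topological reductions (vanishing of $H^1(S(L);\bz)$, the exponential sequence, the two-step analysis of sequence (8), and the $H^2(S(L);\bz)=0$ argument for $Pic=Pic^0$) match the paper, and your treatment of the kernel of $\gamma_*$ on $H^1(L;\co)$ via the cone-separation argument is in the right spirit. The genuine gap is in the cokernel step. You argue that $\gamma$ preserves the bi-graded pieces $V(k_1\omega_1)^\vee\otimes V(k_2\omega_2)^\vee$, that its spectrum there agrees with that of $\lambda_s$ and is nonzero for $(k_1,k_2)\neq(0,0)$, and conclude ``cokernel exactly $\bc$.'' But $H^0(L;\co)$ is the Fr\'echet completion of the algebraic direct sum, and blockwise invertibility does not give surjectivity on the completion: one must show that the formal solution of $\gamma\phi=f$, obtained by inverting $\gamma$ on each graded piece, actually converges. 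Even in the diagonal case this needs the quantitative statement the paper proves (the eigenvalues $b\cdot{\bf m}$ are not only nonzero but bounded away from $0$ and tend to $\infty$, so dividing Taylor coefficients preserves convergence); ``nonzero eigenvalue on each piece'' alone is compatible with a dense, non-closed image. Your closing remark that it suffices to check the weight decomposition is an internal topological direct sum is not correct as stated --- one needs uniform control of the inverses, not merely a Schauder-type decomposition.

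The situation is worse precisely where you claim to go beyond the diagonal case: when $\lambda_u\neq 0$ the restriction of $\gamma$ to a graded piece is $\lambda_s+$ nilpotent, with nilpotent blocks whose size grows with $(k_1,k_2)$, so bounding the inverses requires estimating $\sum_j \lambda_s^{-1}(\lambda_u\lambda_s^{-1})^j$ on pieces of unbounded length --- an analytic argument you do not supply. The paper deliberately sidesteps this: it first reduces the linear-type case to $\lambda_u=0$ by viewing $S_{\lambda_\varepsilon}(L)$, $\lambda_\varepsilon=\lambda_s+\lambda_{u,\varepsilon}$, as a family degenerating to the diagonal structure and invoking Kodaira--Spencer upper semicontinuity of $\dim H^1(\cdot;\co)$ together with the lower bound $\dim H^1\geq 1$ from Theorem \ref{cohomologyofs}; only then does it solve the explicit equation $\sum b_jz_j\partial\phi/\partial z_j=f$ (after lifting from the cone $\hat L_1\times\hat L_2$ to $V(\omega_1)\times V(\omega_2)$ via Hartogs and projective normality, and, for the $H^1$ kernel when $X_1=\bp^1$, chasing the ideal sequence $0\to\mathcal A\hat\otimes\mathcal I\to\mathcal A\hat\otimes H^0(V(\omega_2);\co)\to\mathcal A\hat\otimes H^0(L_2;\co)\to 0$). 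Without either the semicontinuity reduction or genuine estimates on the non-semisimple blocks, your proof of $H^1(S_\lambda(L);\co)\cong\bc$ is incomplete.
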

\begin{proof} It is easy to see that $H^1(S(L);\mathbb{Z})=0$ and that, when $P_i$ are maximal parabolic subgroups and $L_i$ generators of $Pic(X_i)\cong \bz$,   $S(L)$ is $2$-connected. 
If $\dim X_i>1$ for $i=1,2$, then $H^1(L;\mathcal{O})=0$ by Theorem \ref{cohomologyofl} and so we need only show 
that $coker(H^0(L;\mathcal{O})\stackrel{\gamma_*}{\lr} H^0(L;\mathcal{O}))$ is isomorphic to $\bc$. 
In case $\dim X_i=1$---equivalently $X_i=\mathbb{P}^1$---$\bar{L}_i$ is the tautological bundle by our hypothesis. Thus $L_i=\bc^2\setminus\{0\}$.  In this case we need to also show that $\ker(H^1(L;\mathcal{O})\stackrel{\gamma_*}{\lr} H^1(L;\mathcal{O}))$ is zero.  Note that the theorem is known due to Loeb and Nicolau 
\cite[Theorem 2]{ln} when both the $X_i$ are projective spaces and the $\bar{L}_i$ are negative ample generators---in particular when both $X_i=\mathbb{P}^1$.

The validity of the theorem for the case when $\lambda$ is of diagonal type implies its validity in the linear case as well.   This is because one has a 
family $\{L/\bc_{\lambda_\varepsilon}\}$ of complex manifolds parametrized by $\varepsilon\in\bc$ defined  by $\lambda_\varepsilon=\lambda_s+\lambda_{u,\varepsilon}$, where $S_{\lambda_\varepsilon}(L)=L/\bc_{\lambda_\varepsilon}\cong L/\bc_\lambda$ if $\varepsilon\neq 0$ and $\lambda_0:=\lambda_s$ is of diagonal type. (See \S3.)
The semi-continuity property (\cite[Theorem 6, \S4]{ks}) for $\dim H^1(S_{\lambda_\varepsilon}(L);\mathcal{O})$ implies that $\dim H^1(S_\lambda(L);\mathcal{O})\leq 
\dim H^1(S_{\lambda_s}(L);\mathcal{O})$. But Theorem \ref{cohomologyofs} says that $\dim H^1(S_\lambda(L);\mathcal{O})\geq 1$ and so equality must hold, if $H^1(S_{\lambda_s}(L);\mathcal{O})\cong \bc$.  
Therefore we may (and do) assume that the complex structure is of diagonal type. 

First we show that $coker(\gamma_*:H^0(L;\mathcal{O})\lr H^0(L;\mathcal{O}))$ is $1$-dimensional,  generated by the constant functions. Consider the commuting diagram
where $\wt{\gamma}$ is the holomorphic vector field defined by the action of $\bc$ given by $\lambda(\omega_1,\omega_2)$ on $V(\omega_1,\omega_2)$.  Note that $\wt{\gamma}_x=\gamma_x$ if $x\in L$. 

\[
\begin{array}{ccc}
H^0(V(\omega_1,\omega_2);\mathcal{O})& \stackrel{\wt{\gamma}_*}{\lr } &H^0(V(\omega_1,\omega_2);\mathcal{O})\\
\downarrow &                                               & \downarrow\\
 H^0(L;\mathcal{O})& \stackrel{\gamma_*}{\lr} & H^0(L;\mathcal{O})\\
 \end{array}
 \]
By Hartog's theorem,  $H^0(V(\omega_1,\omega_2);\mathcal{O})\cong 
H^0(V(\omega_1)\times V(\omega_2);\mathcal{O})$.  Also, since $\hat{L}_i$ is normal at its vertex \cite{rr}, again by Hartog's theorem,
$H^0(L;\mathcal{O})\cong H^0(\hat{L}_1\times \hat{L}_2;\mathcal{O})$. Since $\hat{L}_i\subset V(\omega_i)$ are closed sub varieties, it follows that 
the both vertical arrows, which are induced by the inclusion of $L$ in $V(\omega_1,\omega_2)$, are surjective. 
From what has been shown in the proof of Theorem \ref{cohomologyofs}, we know that the constant functions 
are not in the cokernel of $\gamma_*$. So it suffices to show that $coker(\wt{\gamma}_*)$ is $1$-dimensional.  This was established in the course of 
proof of Theorem 2 of \cite{ln}.  For the sake of completeness we sketch the proof. 
We identify $V(\omega_i)$ with $\bc^{r_i}$ where $r_i:=
\dim V(\omega_i)$, by choosing a basis for $V(\omega_i)$ consisting of weight vectors. Let $r=r_1+r_2$ so 
that $\bc^r\cong V(\omega_1)\times V(\omega_2)$. 
The problem is reduced to the following: Given a  holomorphic function $f:\bc^r\lr \bc$ with $f(0)=0$, solve for a holomorphic function $\phi$ satisfying the equation 
\[
\sum_{i\leq j} b_{j}z_j\frac{\partial \phi}{\partial z_j}=f,\eqno{(9)}
\] 
where we may (and do) assume that $\phi(0)=0.$
In view of the {\it Observation} made preceding the statement of Theorem \ref{lineartype}, we need only to 
consider the case where 
$(b_{j})\in \bc^r$ satisfies the weak hyperbolicity condition of type $(r_1,r_2)$. 
Denote by $z^{{\bf m}}$ the monomial $z_1^{m_1}\ldots z_n^{m_r}$ where ${\bf m}=(m_1, \ldots,m_r)$ and by $|{\bf m}|$ its degree $\sum_{1\leq j\leq r} m_j.$  Let $f(z)=\sum_{|{\bf m}|>0} a_{\bf m}z^{{\bf m}}\in H^0(\bc^r;\mathcal{O})$.  Then $\phi(z)=\sum a_{{\bf m}}/(b.{\bf m})z^{{\bf m}}$ where $b.{\bf m}=\sum b_{j}m_j$ is the unique solution of Equation (9). 
Note that weak hyperbolicity and the fact that $|{\bf m}|>0$ imply that $b.{\bf m}\neq 0$, and, $b.{\bf m}\to \infty $ as ${\bf m}\to \infty$. Therefore 
$\phi$ is a convergent power series and so $\phi\in H^0(\bc^r;\mathcal{O})$. 

It remains to show that, when $X_1=\mathbb{P}^1$, $L_1=\bc^2\setminus\{0\}$, and 
$\dim X_2>1$,  the homomorphism $\gamma_*:H^1(L;\mathcal{O})\lr H^1(L;\mathcal{O})$ is injective.  
Let $z_j, 1\leq j\leq r,$ denote the coordinates of $\bc^2\times V(\omega_2)$ with respect to a basis consisting of $\wt{T}$-weight vectors.  Since $\dim X_2>1,$ we have $H^1(L_2,\mathcal{O})=0$.  Also $H^1(L_1;\mathcal{O})=H^1(\bc^2\setminus\{0\};\mathcal {O})$ is the space $\mathcal{A}$ of convergent power series $\sum_{m_1, m_2<0} a_{m_1,m_2}z_1^{m_1}z_2^{m_2}$ in $z_1^{-1}, z_2^{-1}$ without constant terms.  

By Theorem \ref{cohomologyofl}, $H^1(L;\mathcal{O})=\mathcal{A}\hat{\otimes} H^0(L_2;\mathcal{O})\cong \mathcal{A}\hat{\otimes} H^0(\hat{L}_2;\mathcal{O})$.  Let $\mathcal{I}\subset H^0(V(\omega_2);\mathcal{O})$ denote the ideal of functions vanishing on $\hat{L}_2$ so that $H^0(\hat{L}_2;\mathcal{O})=H^0(V(\omega_2);\mathcal{O})/\mathcal{I}$.
One has the commuting diagram
\[
\begin{array}{ccccccccc}
0 &\to& \mathcal{A}\hat{\otimes}\mathcal{I} & \to &\mathcal{A}\hat{\otimes}H^0(V(\omega_2);\mathcal{O})
&\to &\mathcal{A}\hat{\otimes}H^0(L_2;\mathcal{O}) &\to  0\\
&&\wt{\gamma}_* \downarrow ~~&& \wt{\gamma}_*\downarrow ~~&&\gamma_*\downarrow ~~&&\\

0 &\to& \mathcal{A}\hat{\otimes}\mathcal{I} & \to &\mathcal{A}\hat{\otimes}H^0(V(\omega_2);\mathcal{O})
&\to &\mathcal{A}\hat{\otimes}H^0(L_2;\mathcal{O}) &\to  0\\
\end{array}
\]
where the rows are exact.  Theorem 2 of \cite{ln} implies that $\wt{\gamma}_*:\mathcal{A}\hat{\otimes}H^0(V(\omega_2);\mathcal {O})\lr  \mathcal{A}\hat{\otimes}H^0(V(\omega_2);\mathcal {O})$ is an 
isomorphism.   
As before, this is equivalent to show that  Equation (9) has a (unique) solution $\phi$ without constant term   when $f=\sum_{{\bf m}}c_{{\bf m}} 
z^{{\bf m}}\in \mathcal{A}\hat{\otimes} H^0(V(\omega_2); \mathcal{O})$, is any convergent power series in $z_1^{-1},z_2^{-1},z_j, 3\leq z_j\leq r,$ where the sum ranges over ${\bf m}=(m_1,m_2,\ldots, m_r)\in \bz^r, m_1,m_2<0, m_j\geq 0,~\forall j\geq 3.$  
It is clear that $\phi(z)
=\sum c_{{\bf m}}/(b.{\bf m})z^{{\bf m}}$ is the unique formal solution.  Note that weak 
hyperbolicity condition implies that $b.{\bf m}\neq 0$ and $b.{\bf m}\to \infty$ as $\sum_{j\geq 1}|m_j|\to \infty$. So $\phi(z)$ is a well-defined convergent power series 
in the variables $z_1^{-1},z_2^{-1}, z_j, j\geq 3$ and is 
divisible by $z_1^{-1}z_2^{-1}$.  Hence $\phi\in \mathcal{A}\hat{\otimes}H^0(V(\omega_2);\mathcal{O})$ and so 
$\wt{\gamma}_*:\mathcal{A}\hat{\otimes}H^0(V(\omega_2);\mathcal{O})\lr\mathcal{A}\hat{\otimes}H^0(V(\omega_2);\mathcal{O})$ is an isomorphism.  
The ideal $\mathcal{I}$ is stable under the action of $\wt{T}_2$, and so is generated as an ideal by (finitely many) polynomials in $z_3,\ldots, z_n$ which are $\wt{T}_2$-weight vectors.  In particular,  the generators are certain homogeneous polynomials $h(z_3,\ldots, z_n)$ such that $\wt{\gamma}_*(z_1^{m_1}z^{m_2}_2h)= b.{\bf m} z_1^{m_1}z_2^{m_2}h~\forall m_1,m_2\in \bz$ where $z^{{\bf m}}$ is any monomial that occurs in $z_1^{m_1}z_2^{m_2}h$.   It follows easily that $\wt{\gamma}_*$ maps  
$\mathcal{A} \hat{\otimes} \mathcal{I}$ isomorphically onto itself.   A straightforward argument involving diagram 
chase now shows that $\gamma_*:\mathcal{A}\hat{\otimes}H^0(L_2;\mathcal{O})\lr\mathcal{A}\hat{\otimes} H^0(L_2;\mathcal{O})$ is an isomorphism.  This completes the proof.  \end{proof}

Assume that $P_i\subset G_i$ are maximal parabolic subgroups so that $Pic(G_i/P_i)\cong \bz$. Suppose that  
the $\bar{L}_i$ are the negative ample generators of the $Pic(G_i/P_i)$.    
We  have the following description of the principal $\bc$-bundles $L_z, z\in \bc$, over $S_\lambda(L)$.
When $z=0$, $L_z$ is the trivial bundle. So let $z\neq 0$.   Let $\{g_{ij}\}$ be a $1$-cocyle defining the principal $\bc$-bundle $L\lr S_\lambda(L)$.  Then the $\bc$-bundle $L_z$ representing the element 
$z[L]\in H^1(S_\lambda(L);\mathcal{O})$ is 
defined by the cocylce $\{zg_{i,j}\}$ for any $z\in \bc$.   We denote the corresponding $\bc$-bundle by $L_z$. Note that the total space and the projection are the same as that 
of $L$. The $\bc$-action on $L_z$ is related to that on $L$  where $w.v\in L_z$ equals $(w/z).v=\alpha_\lambda(w/z)(v)\in L$ for 
$w\in \bc, v\in L$. The vector field corresponding to the $\bc$-action on $L_z$ is given by $(1/z)\gamma_\lambda$.   Of course, when $z=0$, $L_z$ is just the product bundle. 

We shall denote the line bundle (i.e. rank $1$ vector bundle) corresponding to $L_z$ by 
$E_z$. Observe that $E_z=L_z\times _\bc \bc$, where $(w.v, t)\sim (v, \exp(2\pi \sqrt{-1} w)t), ~w, t\in \bc, v\in L_z$, when $z\neq 0$.   If $z\neq 0$, any cross-section $\sigma: S_\lambda(L)\lr E_z=L_z\times_\bc\bc$ corresponds to a holomorphic function $h_\sigma:L\lr \bc$  which satisfies the following:
\[
h_\sigma(w.v)=\exp(-2\pi\sqrt{-1}w)h_\sigma(v) \eqno(10)
\] 
for all $v\in L_z, w\in \bc$.   Equivalently, this means that 
$h_\sigma(\alpha_\lambda(w)v)=\exp(-2\pi\sqrt{-1}wz)h_\sigma(v) $ for $w\in \bc$ and  $v\in L$.
This implies that 
\[
\gamma_\lambda(h_\sigma)=-2\pi\sqrt{-1}z h_\sigma. \eqno(11)
\]
Conversely, if $h$ satisfies (11), then it determines a unique cross-section of $E_z$ over 
$S_\lambda(L)$.  

We have the following result concerning the field of 
meromorphic functions on $S_\lambda(L)$ with 
$\lambda_u=0$.  The proof will be given after some preliminary observations.

\begin{theorem} \label{transcendence} 
Let $L_i$ be the negative ample generator of $Pic(G_i/P_i)\cong \bz$ where $P_i$ is a maximal parabolic subgroup of $G_i$, i=1,2.  Assume that $\lambda_u=0$.
Then the field $\kappa(S_\lambda(L))$ of meromorphic functions of $S_\lambda(L)$ is purely transcendental over $\bc$.  The transcendence degree of $\kappa(S_\lambda(L))$ is less than $\dim S_\lambda(L).$   
\end{theorem}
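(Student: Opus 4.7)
The assumption $\lambda_u=0$ means $\alpha_\lambda\colon\bc\to\wt{B}$ factors through the algebraic torus $\wt{T}$, so that $\bc_\lambda\subset\wt{T}$ is a one-parameter analytic subgroup. My strategy is to identify $\kappa(S_\lambda(L))$ with the field of $\bc_\lambda$-invariant rational functions on the affine variety $\hat{L}_1\times\hat{L}_2$ and then exhibit algebraically independent generators. Set $R_i := \bigoplus_{k\geq 0} V(k\omega_i)^\vee = \bc[\hat{L}_i]$ (using that $P_i$ is maximal so that $\bar{L}_i$ generates $\textrm{Pic}(G_i/P_i)$), and $F := \textrm{Frac}(R_1\otimes R_2)=\kappa(\hat{L}_1\times\hat{L}_2)$.

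First I lift: any $\phi\in\kappa(S_\lambda(L))$ pulls back along $\pi_\lambda$ to a $\bc_\lambda$-invariant meromorphic function $\tilde\phi$ on $L$. Since $L=\hat{L}_1\times\hat{L}_2\setminus(\hat{L}_1\times\{a_2\}\cup\{a_1\}\times\hat{L}_2)$ has complement of codimension $\geq 2$ and each $\hat{L}_i$ is normal (by projective normality of $G_i/P_i$ with respect to $\bar{L}_i^\vee$, \cite{rr}), Hartogs extends $\tilde\phi$ to $\hat{L}_1\times\hat{L}_2$. To prove $\tilde\phi$ is in fact rational, I would write it as a ratio $h_1/h_2$ of holomorphic sections of a common line bundle $E_z$ on $S_\lambda(L)$, so that each $h_i$ is a holomorphic function on $L$ with $\gamma_\lambda(h_i)=-2\pi\sqrt{-1}\,z\,h_i$ as in equation (11). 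Fourier decomposing $h_i$ under the maximal compact torus $K=(\bs^1)^N\subset\wt{T}$ gives $h_i=\sum_\chi h_{i,\chi}$, where $h_{i,\chi}$ is a $\wt{T}$-weight vector of weight $\chi$ satisfying the same eigenvalue equation with $d\chi(\lambda)=-2\pi\sqrt{-1}z$. Under the scaling $\bc^*$-subgroup of each $\wt{T}_i$ a weight vector on $\hat{L}_i$ is a homogeneous holomorphic function of some integer degree $k_i$; Hartogs-extension across the vertex of $\hat{L}_i$ forces $k_i\geq 0$ and places $h_{i,\chi}$ inside $V(k_1\omega_1)^\vee\otimes V(k_2\omega_2)^\vee\subset R_1\otimes R_2$. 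Hence $\tilde\phi\in F$ and $\kappa(S_\lambda(L))=F^{\bc_\lambda}$.

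Because the $\bc_\lambda$-action on $F$ extends to the action of the algebraic torus $T_\lambda\subset\wt{T}$ given by the Zariski closure of $\bc_\lambda$, we have $F^{\bc_\lambda}=F^{T_\lambda}$. The dimension of $T_\lambda$ equals the $\bq$-dimension of $\bq\lambda_1+\cdots+\bq\lambda_N\subset\bc$; the weak hyperbolicity condition gives $\arg(\lambda_i)<\arg(\lambda_j)$ for some $i\leq n_1<j$, making $\lambda_i,\lambda_j$ $\bq$-linearly independent, so $\dim T_\lambda\geq 2$. Consequently
\[
\textrm{tr.deg}_{\bc}\,\kappa(S_\lambda(L))=\dim(\hat{L}_1\times\hat{L}_2)-\dim T_\lambda\leq\dim L-2=\dim S_\lambda(L)-1,
\]
which gives the strict inequality $\textrm{tr.deg}_\bc\,\kappa(S_\lambda(L))<\dim S_\lambda(L)$.

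For pure transcendence, I decompose $F=\bigoplus_\chi F_\chi$ into $\wt{T}$-weight spaces; each nonzero $F_\chi$ is a rank-one module over $F^{\wt{T}}$, generated by a semi-invariant $e_\chi$, for $\chi$ in the sublattice $\Gamma\subset X^*(\wt{T})$ of weights of semi-invariants in $F$. Thus $F^{T_\lambda}=\bigoplus_{\chi\in K_\lambda\cap\Gamma}F^{\wt{T}}\cdot e_\chi$, where $K_\lambda=\{\chi:\chi(\lambda)=0\}$. The multiplicative cocycle $e_\chi e_{\chi'}=c_{\chi,\chi'}e_{\chi+\chi'}$ in $F^{\wt{T}}$ can be trivialized by rescaling the $e_\chi$ (since $K_\lambda\cap\Gamma$ is a free abelian group and we work in characteristic zero), so a $\bz$-basis $\chi_1,\ldots,\chi_s$ of $K_\lambda\cap\Gamma$ yields algebraically independent generators $e_{\chi_1},\ldots,e_{\chi_s}$ of $F^{T_\lambda}$ over $F^{\wt{T}}$. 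Pure transcendence of $F^{\wt{T}}$ itself follows from rationality of the flag varieties $G_i/P_i$ combined with the structure of the $\wt{T}$-action on the product cone $\hat{L}_1\times\hat{L}_2$. The main obstacle is the algebraicity argument in the second paragraph, where one must control the Fourier expansion of holomorphic $\gamma_\lambda$-eigenfunctions and use normality of $\hat{L}_i$ to rule out negative-degree weight components; trivialization of the cocycle and the rationality of $F^{\wt{T}}$ are the other technically delicate points.
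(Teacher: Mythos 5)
Your overall strategy—identify $\kappa(S_\lambda(L))$ with the $\bc_\lambda$-invariant rational functions on $\hat{L}_1\times\hat{L}_2$ and then exhibit a monomial-type generating set—is close in spirit to the paper, and the first reduction (ratio of sections of a common $E_z$, the eigenfunction equation (11), extension across the vertices by normality) matches the paper's proof. But the step you yourself flag as "the main obstacle" is a genuine gap, not a technicality: after decomposing a holomorphic eigenfunction $h$ into $\wt{T}$-weight components lying in the pieces $V(k_1\omega_1)^\vee\otimes V(k_2\omega_2)^\vee$, an \emph{infinite} sum of such components is merely holomorphic, so you have not shown $\tilde\phi\in F$, i.e.\ you have not established $\kappa(S_\lambda(L))\subseteq F^{\bc_\lambda}$ at all. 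The missing argument is exactly Lemma \ref{eigenvectors} of the paper: all components of $h$ have the same $\lambda$-weight $-2\pi\sqrt{-1}z$, and weak hyperbolicity (every $\lambda_\mu$ satisfies $0\le\arg\lambda_\mu<\pi$, and the weights occurring in $\bc[\hat{L}_1\times\hat{L}_2]$ are non-negative integral combinations of them) forces only finitely many graded pieces to carry a prescribed $\lambda$-weight, so $h$ is a polynomial and $\tilde\phi$ is rational. Without this, the rest of your argument has nothing to stand on.

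The second gap is in the pure-transcendence part. The claimed decomposition $F=\bigoplus_\chi F_\chi$ is false for a field (e.g.\ $1/(x+y)$ is not a finite sum of semi-invariants); it can be repaired—factoriality of the cones $\hat{L}_i$ (here $Cl(\hat{L}_i)=0$) gives that every invariant rational function is a ratio of semi-invariant polynomials of equal weight, and semi-invariants of distinct weights are linearly independent over invariants—so that $F^{T_\lambda}$ is purely transcendental over $F^{\wt{T}}$ can be salvaged. What cannot be waved away is your final assertion that $F^{\wt{T}}$ is purely transcendental "by rationality of $G_i/P_i$": rationality of a variety does not pass to torus quotients, and the rationality of $\bc(\hat{L}_1\times\hat{L}_2)^{\wt{T}}$ is a statement of the same nature and difficulty as the theorem itself. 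The paper's proof supplies precisely the input you are missing: by Lemma \ref{jacobian} (a standard monomial theory fact) the functions $F_i,F_{i,\beta}$, $\beta\in R_{P_i}$, are algebraically independent coordinates on the dense open set $\wt{U}_1\times\wt{U}_2$ \emph{and} are $\wt{T}$-weight vectors, so the action is monomial there; consequently every invariant is a ratio of equal-$\lambda$-weight monomial sums, $\kappa(S_\lambda(L))=\bc(\mathcal{K})$ for the lattice $\mathcal{K}$ of $\lambda$-weight-zero Laurent monomials, and a $\bz$-basis of $\mathcal{K}$ is algebraically independent (Lemma \ref{independence}). You would need to produce an equivariant linearization of this kind to close your argument. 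A smaller point: your equality $\mathrm{tr.deg}=\dim(\hat{L}_1\times\hat{L}_2)-\dim T_\lambda$ presupposes that the generic $T_\lambda$-stabilizer on the cone is finite (true here because the $\wt{T}$-weights of $V(\omega_i)$ span the character group rationally, but this needs proof); only with that does your bound $\le\dim S_\lambda(L)-1$ follow.
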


Let $U_i$ denote the {\it opposite big cell}, namely the  $B_i^{-}$-orbit of $X_i=G_i/P_i$ the identity coset where $B_i^-$ is the Borel 
subgroup of $G_i$ opposed to $B_i$.  
One knows that $U_i$ is 
a Zariski dense open subset of $X_i$ and 
is isomorphic to $\bc^{r_i}$ where $r_i$ is the number of 
positive roots in the unipotent part $P_{i,u}$ of $P_i$. 
The bundle $\pi_i:L_i\lr X_i$ is trivial over $U_i$ and so $\wt{U}_i:=\pi_i^{-1}(U_i)$ is isomorphic to $\bc^{r_i}\times \bc^*$.  We shall now describe a specific isomorphism which will be used in the proof of the above 
theorem. 

Consider the projective imbedding $X_i\subset \mathbb{P}(V(\omega_i))$. 
Let $v_0\in V(\omega_i)$ be a highest weight vector so that 
$P_i$ stabilizes $\bc v_0$; equivalently, $\pi_i(v_0)$ is the identity coset in $X_i$.  Let $Q_i\subset P_i$ be the isotropy at $v_0\in V(\omega_i)$ for the $G_i$ so that  
$G_i/Q_i=L_i$.  The Levi part of $P_i$ is equal to the centralizer of  
a one-dimensional torus $\mathcal{Z}$ contained in $T$ and projects onto $P_i/Q_i\cong \bc^*$, the structure group of $L_i\lr X_i$.  

Let $F_i\in H^0(X_i;L^\vee_i)=V(\omega_i)^\vee$ be the lowest weight vector such that $F_i(v_0)=1$.  Then $U_i\subset X_i$ is precisely the locus $F_i\neq 0$ and $F_i|_{\pi_i^{-1}([v])}:\bc v\lr \bc$ is 
an isomorphism of vector spaces for $v\in\wt{U}_i$.   
We denote also by $F_i$ the restriction of $F_i$ to $\wt{U}_i$. 

Let $Y_\beta$ be the Chevalley basis element of $Lie(G_i)$ of weight $-\beta, \beta\in R^+(G_i)$.  We shall 
denote by $X_\beta\in Lie(G_i)$ the Chevalley basis element of weight $\beta\in R^+(G_i).$  Recall that $H_\beta:=[X_\beta,Y_\beta]\in Lie(T)$ is non-zero whereas 
$[X_\beta,Y_{\beta'}]=0$ if $\beta\neq \beta'$. 

Let $R_{P_i}\subset R^+(G_i)$ denote the set of positive roots of $G_i$ complementary to positive roots of Levi part of $P_i$ and fix an ordering on it.  (Thus $\beta\in R_{P_i}$ if and only if $-\beta$ is a not a root of $P_i$.)  Let $r_i=|R_{P_i}|=\dim X_i$.  Then  $Lie(P_{i,u}^-)\cong \bc^{r_i}$ where $P_{i,u}^-$ denotes the unipotent radical of the 
parabolic subgroup opposed to $P_i$.   Observe that $P_i\cap P_{i,u}^-=\{1\}$.  
The exponential map defines an {\it algebraic} isomorphism $\theta :\bc^{r_i}\cong Lie(P_{i,u})^-\lr U_i$ 
where $\theta((y_\beta)_{\beta\in R_{P_i}})=(\prod_{\beta\in R_{P_i}} \exp(y_\beta Y_\beta)).P_i\in G_i/P_i$.  It is 
understood that, here and in the sequel, the product is carried out according to 
the ordering on $R_{P_i}$.

If $v\in \bc v_0$, then $\theta$ factors through the map 
$\theta_v:\bc^{r_i}\cong Lie(P_{i,u}^-)\lr \wt{U}_i$ defined by $(y_\beta)_{\beta\in R_{P_i}}\mapsto \prod\exp(y_\beta Y_\beta).v$.  Moreover,  $F_i$ is constant ---equal to $F_i(v)$---on the image of $\theta_v$.

We define $\wt{\theta}:\bc^{r_i}\times \bc^*\cong 
Lie(P_{i,u}^-)\times \bc^*\cong P_{i,u}^{-}\times \bc^*\lr \wt{U}_i$ 
to be $\wt{\theta}((y_{\beta}),z)=(\prod\exp(y_\beta Y_\beta)).zv_0=\theta_{z.v_0}((y_\beta))$.  This is an isomorphism.  
We obtain coordinate functions  $z,y_\beta, \beta\in R_{P_i}$ by composing $\wt{\theta}^{-1}$ with  
projections $\bc^{r_i}\times \bc^*\lr \bc$.
Note that $F_i(\wt{\theta}((y_\beta),z)))=z$. Thus the coordinate function $z$ is identified with $F_i$.

Since $F_i$ is the lowest weight vector (of weight $-\omega_i$), $Y_\beta F_i=0$ for all $\beta\in R^+(G_i).$
Define $F_{i,\beta}:=X_\beta(F_i), \beta\in R_{P_i}$.  Then $Y_\beta(F_{i,\beta})=-[X_\beta,Y_\beta]F_i
=-H_\beta(F_i)=\omega_i(H_\beta)F_i$ for all $\beta\in R_{P_i}$. Note that $\omega_i(H_\beta)\neq 0$ as $H_\beta\in R_{P_i}$. 
If $\beta',\beta\in R_{P_i}$ are unequal, then 
$Y_{\beta'}F_{i,\beta}=0$.  It follows that 
$Y_{\beta'}^{m}(F_{i,\beta})=0$ unless $\beta'=\beta$ and $m=1$.

The following result is well-known to experts in 
standard monomial theory.  (See \cite{ls}.)

\begin{lemma} \label{jacobian}
With the above notations, the map $\wt{U}_i\lr \bc^{r_i}\times \bc^*$  defined as 
$v\mapsto ((F_{i,\beta}(v))_{\beta\in R^+_{P_i}}; F_i(v)), ~v\in \wt{U}_i,$ is an algebraic isomorphism for $i=1,2$. 
\end{lemma}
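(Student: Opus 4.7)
The plan is to express everything in explicit coordinates via $\wt\theta$ and verify that the map in the lemma becomes a triangular polynomial automorphism of $\bc^{r_i}\times\bc^*$. For convenience I would fix the total ordering on $R_{P_i}$ to be compatible with root height, say $\beta_1\prec\beta_2\prec\cdots\prec\beta_{r_i}$ with $\mathrm{ht}(\beta_k)\le\mathrm{ht}(\beta_{k+1})$, and use this ordering in the definition of $\wt\theta$. Since $\wt\theta((y_\gamma),z)=z\prod_\gamma\exp(y_\gamma Y_\gamma)v_0$ and $F_{i,\beta}$ is linear on $V(\omega_i)$, one has
\[
F_{i,\beta}(\wt\theta((y_\gamma),z))=z\cdot f_\beta((y_\gamma)),\qquad f_\beta((y_\gamma)):=F_{i,\beta}\Bigl(\prod_\gamma\exp(y_\gamma Y_\gamma)v_0\Bigr).
\]
Combined with the already noted identity $F_i\circ\wt\theta=z$, the lemma reduces to showing that $(y_\gamma)\mapsto(f_\beta((y_\gamma)))_{\beta\in R_{P_i}}$ is an algebraic automorphism of $\bc^{r_i}$.

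Next I would expand
\[
\prod_\gamma\exp(y_\gamma Y_\gamma)v_0=\sum_{(k_\gamma)\in\bz_{\ge 0}^{R_{P_i}}}\frac{\prod_\gamma y_\gamma^{k_\gamma}}{\prod_\gamma k_\gamma!}\,Y_{\beta_1}^{k_{\beta_1}}\cdots Y_{\beta_{r_i}}^{k_{\beta_{r_i}}}v_0
\]
and track weights: the vector $Y_{\beta_1}^{k_{\beta_1}}\cdots Y_{\beta_{r_i}}^{k_{\beta_{r_i}}}v_0$ has weight $\omega_i-\sum_\gamma k_\gamma\gamma$, while $F_{i,\beta}=X_\beta F_i$ lies in the weight space $-\omega_i+\beta$ of $V(\omega_i)^\vee$. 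Hence only multi-indices $(k_\gamma)$ with $\sum_\gamma k_\gamma\gamma=\beta$ survive in the pairing. For $\beta=\beta_k$, additivity of height forces every contributing $\gamma$ to have $\mathrm{ht}(\gamma)\le\mathrm{ht}(\beta_k)$, and the unique solution with $k_{\beta_k}\ne 0$ is the distinguished one $k_{\beta_k}=1$ with all other $k$'s zero (any other expression of $\beta_k$ involving $\beta_k$ itself would have height strictly greater than $\mathrm{ht}(\beta_k)$). Consequently
\[
f_{\beta_k}((y_\gamma))=c_{\beta_k}\,y_{\beta_k}+P_k(y_{\beta_1},\ldots,y_{\beta_{k-1}})
\]
for some polynomial $P_k$ in strictly lower-indexed variables only, with $c_{\beta_k}=F_{i,\beta_k}(Y_{\beta_k}v_0)$.

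The leading coefficient is then computed directly, using $X_{\beta_k}v_0=0$ and $[X_{\beta_k},Y_{\beta_k}]=H_{\beta_k}$:
\[
c_{\beta_k}=(X_{\beta_k}F_i)(Y_{\beta_k}v_0)=-F_i(X_{\beta_k}Y_{\beta_k}v_0)=-F_i(H_{\beta_k}v_0)=-\omega_i(H_{\beta_k}),
\]
which is nonzero because $P_i=P_{\omega_i}$, so $\beta_k\in R_{P_i}$ precisely means $\beta_k$ is not a root of the Levi of $P_{\omega_i}$, forcing $\omega_i(H_{\beta_k})>0$. The map $(y_\gamma)\mapsto(f_\beta(y))_{\beta\in R_{P_i}}$ is therefore a lower-triangular polynomial map with nonzero diagonal $(c_{\beta_1},\ldots,c_{\beta_{r_i}})$; one recovers $y_{\beta_k}$ inductively from $f_{\beta_1},\ldots,f_{\beta_k}$ by a finite polynomial substitution, so it is an algebraic automorphism of $\bc^{r_i}$. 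Coupled with $F_i\circ\wt\theta=z$, this yields the required algebraic isomorphism $\wt U_i\to\bc^{r_i}\times\bc^*$.

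The main obstacle is the combinatorial weight step that makes the Jacobian strictly lower triangular: one must verify that, after height-additivity kills every monomial in $f_{\beta_k}$ whose total height exceeds $\mathrm{ht}(\beta_k)$, no monomial other than $y_{\beta_k}$ itself can involve a variable $y_{\beta_{k'}}$ with $\mathrm{ht}(\beta_{k'})=\mathrm{ht}(\beta_k)$, which requires that all elements of $R_{P_i}$ have strictly positive height. The other delicate point is the non-vanishing $\omega_i(H_{\beta_k})\ne 0$, which is what pins down the nonzero diagonal and crucially uses that $P_i$ is precisely the stabilizer $P_{\omega_i}$ of the line through the highest weight vector rather than a smaller parabolic.
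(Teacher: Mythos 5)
Your proof is correct, and it reaches the lemma by a different mechanism than the paper. The paper also works in the chart $\wt{\theta}$, but argues infinitesimally: using $\partial f/\partial y_\beta|_{v_0}=Y_\beta(f)(v_0)$ and left translation by $y=\prod\exp(y_\gamma Y_\gamma)$, it computes $(\partial/\partial y_\beta)|_v(F_{i,\gamma})=F_i(v)\,\omega_i(H_\beta)\delta_{\beta,\gamma}$, so that the Jacobian of the $F_{i,\gamma}/F_i$ with respect to the $y_\beta$ is a constant diagonal matrix with nonzero entries, and the lemma follows. You instead expand $\prod\exp(y_\gamma Y_\gamma)v_0$, use weight additivity to keep only multi-indices with $\sum k_\gamma\gamma=\beta_k$, and use height bookkeeping to conclude $F_{i,\beta_k}\circ\wt{\theta}=z\bigl(c_{\beta_k}y_{\beta_k}+P_k(y_{\beta_1},\dots,y_{\beta_{k-1}})\bigr)$ with $c_{\beta_k}=\pm\omega_i(H_{\beta_k})\neq 0$, and then invert the triangular system inductively. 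The computational core is identical in both arguments: $X_\beta v_0=0$, $[X_\beta,Y_\beta]=H_\beta$, $Y_\beta F_i=0$, and $\omega_i(H_\beta)\neq 0$ exactly because $\beta\in R_{P_i}$ with $P_i=P_{\omega_i}$. What your route buys is robustness: for coordinates of the second kind on a non-abelian unipotent radical the fields $\partial/\partial y_\beta$ are not left-invariant, and in general the transition is only triangular rather than diagonal (for instance for $Sp_4$ with $P=P_{\varpi_1}$ the relation $2\alpha_1+\alpha_2=\alpha_1+(\alpha_1+\alpha_2)$ permits a genuine cross term $y_{\alpha_1}y_{\alpha_1+\alpha_2}$ in $F_{i,2\alpha_1+\alpha_2}\circ\wt{\theta}$); your height argument delivers precisely the triangular form, which is all that is needed, while the paper's Jacobian computation is shorter and, where it applies verbatim (abelian unipotent radical), gives the sharper affine-diagonal description of $F_{i,\gamma}/F_i$. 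Two cosmetic remarks: your choice of a height-compatible ordering is harmless since the statement of the lemma does not involve the ordering, and only the sign of $c_{\beta_k}$ depends on the convention chosen for the contragredient action.
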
 
\begin{proof}  It is easily verified that  
$\partial f/\partial y_\beta|_{v_0}=Y_\beta(f)(v_0)$ for any local holomorphic 
function defined in a neighbourhood of $v_0$.  (Cf. 
\cite{ls}.)

Let $y=\wt{\theta}((y_\gamma),z)=\prod_{\gamma\in R_{P_i}}(\exp(y_\gamma Y_\gamma)\in P^-_i$. 
Denote by $l_y:\wt{U}_i\lr \wt{U}_i$ the left multiplication by $y$. 
If $v=y.v_0\in \wt{U}_i$, then $(\partial /\partial y_\beta|_v)(f)$ equals $(\partial /\partial y_\beta) |_{v_0}
 (f\circ l_y).$
Taking $f=F_{i,\beta},\beta\in R_{P_i}$ 
a straightforward computation using  the observation made preceding the lemma, we see that 
$(\partial /\partial y_\beta|_v)(F_{i,\gamma})=Y_\beta|_{v_0}(F_{i,\gamma}\circ l_y)=F_i(v)\omega_i(H_\beta)\delta_{\beta,\gamma}$ (Kronecker $\delta$).  
We also have 
$(\partial/\partial y_\beta|_v)(F_i)=0$ for all $v\in \wt{U}_i$. Hence $(\partial /\partial y_\beta)|_v(F_{i,\gamma}/F_i)=\omega_i(H_\beta)\delta_{\beta,\gamma}$.  Thus the Jacobian matrix relating the $F_{i,\beta}/F_i$ and 
the $y_\beta,\beta\in R_{P_i},$ is a diagonal 
matrix of {\it constant functions}. The diagonal entries are non-zero as $\omega_i(H_\beta)\neq 0$ for $\beta\in R_{P_i}$ and the lemma follows. 
\end{proof} 


We shall use the coordinate functions $F_i,F_{i,\beta},\beta\in R_{P_i},$ 
to write Taylor expansion for analytic functions on 
$\wt{U}_i$.  In particular, the coordinate ring of 
the affine variety $\wt{U}_i$ is just the 
algebra $\bc[F_{i,\beta},\beta\in R_{P_i}][F_i,F_i^{-1}]$.    The projective normality  \cite{rr}  of $G_i/P_i$ implies that $\bc[\hat{L}_i]=\oplus_{r\geq 0} H^0(X_i;L_i^{-r})=\oplus_{r\geq 0}
V(r\omega_i)^\vee$.  Since $\wt{U}_i$ is defined by the non-vanishing of $F_i$, we see that 
$\bc[\wt{U}_i]=\bc[\hat{L}_i][1/F_i]$.

Now let $X=X_1\times X_2$ and $\wt{T}=\wt{T}_1\times \wt{T}_2\cong (\bc^*)^N, N=n_1+n_2,$ where the isomorphism is as chosen in \S3.  Let $d_i>0,i=1,2,$ be chosen as in Proposition \ref{homogstd} so that the $\wt{T}_i$-action on  
$L_i\lr G_i/P_i$ is $d_i$-standard.   Let $\lambda=\lambda_s\in Lie(\wt{T})$. 
Suppose that $\lambda$ satisfies the weak hyperbolicity condition of type 
$(n_1,n_2)$.

Recall from (4) and (5) that for any weight $\mu_i\in\Lambda(\omega_i)$,  there exist elements  
$\lambda_{\mu_1},\lambda_{\mu_2}\in \bc$ such that for any $v=(v_1,v_2)\in V_{\mu_1}(\omega_1)\times V_{\mu_2}(\omega_2)$, the $\alpha_{\lambda}$-action of $\bc$ is given by $\alpha_{\lambda}(z)v=(\exp(z\lambda_{\mu_1})v_1,\exp(z\lambda_{\mu_2})v_2)$.
In fact $\lambda_{\mu_i}=\sum_{n_{i-1}<j\leq n_{i-1}+n_i} d_{\mu_i,j}\lambda_{j}$ where $d_{\mu_i,j}$ are  certain {\it non-negative integers}.  It follows that, as observed in the discussion preceding the statement of 
Theorem \ref{lineartype}, the complex numbers $\lambda_{\mu_i}\in \bc, ~\mu_i\in \Lambda(\omega_i), i=1,2$ satisfy weak hyperbolicity condition:
\[
0\leq \arg(\lambda_{\mu_1})<\arg(\lambda_{\mu_2})<\pi, ~\forall \mu_i\in \Lambda(\omega_i), i=1,2.\eqno(12)
\]

We observe that if $\mu=\mu_1+\cdots+\mu_r=\nu_1+\cdots+\nu_r$, where $\mu_j,\nu_j\in \Lambda(\omega_i)$, then $\lambda_{\mu,r}:=\sum \lambda_{\mu_j}=\sum \lambda_{\nu_j}$. (This is a straightforward verification 
using (3) and (4).)
Therefore, if $v\in V(\omega_i)^{\otimes r}$ is any 
weight vector of weight $\mu$, we get, for the 
diagonal action of $\bc$, 
$z.v=\exp(\lambda_{\mu,r}z)v$.

Any finite dimensional $\wt{G}_i$-representation space $V$ is naturally $\wt{G}_1\times \wt{G}_2$-representation space and is a direct sum of its $\wt{T}$-weight spaces $V_\mu$. If $V$ arises from a representation of $G_i$ via $\wt{G}_i\lr G_i$, then 
the $\wt{T}$-weights of $V$ are the same as $T$-weights.

\begin{definition}
Let $Z_i(\lambda)\subset \bc, i=1,2,$ be the abelian 
subgroup generated by $\lambda_{\mu}, \mu\in \Lambda(\omega_i)$ and  let $Z(\lambda):=Z_1(\lambda)+ Z_2(\lambda)\subset \bc$.  

The $\lambda$-{\em weight} of an element $0\neq f\in 
Hom(V_\mu(\omega_i); \bc)$ is defined to be $wt_\lambda(f):=\lambda_{\mu}$.  If $h\in 
Hom(V(\omega_i)^{\otimes r},\bc )$ is a weight vector of weight $-\mu$, (so that $h\in Hom(V(\omega_i)^{\otimes r}_\mu;\bc)$) we define the $\lambda$-{\em weight of } of $h$ to be $\lambda_{\mu,r}$.  

If $f\in Hom(V_\mu(r\omega_i),\bc)$ is a weight vector (of weight $-\mu$), then it is the image of a unique weight vector $\wt{f}\in Hom(V(\omega_i)^{\otimes r},\bc)$ under the surjection induced by the $\wt{G}_i$-inclusion 
$V(r\omega_i)\hookrightarrow V(\omega_i)^{\otimes r}=V(r\omega_i)\oplus V'$ where $\wt{f}|V'=0$.  We define the $\lambda$-{\em weight of} $f$ to be $wt_\lambda(f):=wt_\lambda(\wt{f})$.  
\end{definition}

If $h_i\in V(r_i\omega_i)^\vee\subset \bc[\hat{L}_i]$, $i=1,2,$ are weight vectors, then $h_1h_2$ is a weight vector of $V(r_1\omega_1)^\vee\otimes V(r_2\omega_2)^\vee\subset \bc[\hat{L}_1\times \hat{L}_2]$ and we have 
$wt_\lambda(h_1h_2)=wt_\lambda(h_1)+wt_\lambda(h_2)\in Z(\lambda).$ Note that $wt_\lambda(f_1\ldots f_k)=\sum_{1\leq j\leq k}wt_\lambda(f_j)\in Z(\lambda)$ where $f_j\in \bc[\hat{L}_1\times \hat{L}_2]=\oplus_{r_1,r_2\geq 0} V(r_1\omega_1)^\vee\otimes V(r_2\omega_2)^\vee$ are weight vectors.  

Also $wt_\lambda(f)\in Z(\lambda)$ is a {\em non-negative} linear combination of $\lambda_j, 1\leq j\leq N,$
for any $\wt{T}$-weight vector $f\in \bc[\hat{L}_1\times \hat{L}_2]$.

If $f\in V(\omega_i)^\vee$, it defines a holomorphic function on $V(\omega_1)\times V(\omega_2)$ and hence on $L$, and denoted by the same symbol $f$; explicitly $f(u_1,u_2)=f(u_i),~\forall (u_1,u_2)\in L.$

\begin{lemma} \label{eigenvectors}
We keep the above notations.  Assume that $\lambda=\lambda_s\in Lie(\wt{T})=\bc^N$. 
     
Fix $\bc$-bases $\mathcal{B}_i$ for 
$V(\omega_i)^\vee$,   
consisting of $\wt{T}$-weight vectors. Let $z_0\in Z(\lambda)$. 
There are only finitely many monomials $f:=f_1\ldots f_k$, $f_j\in \mathcal{B}_1\cup \mathcal{B}_2$  
having $\lambda$-weight $z_0$.  Furthermore, $v_\lambda(f)=wt_\lambda(f)f$.   
\end{lemma}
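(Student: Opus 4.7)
Both assertions reduce to two facts already established in the excerpt: by formula (5), each $\wt{T}$-weight vector transforms under $\alpha_\lambda$ as a character, and by formulas (3)--(4) together with the weak hyperbolicity condition (1), the exponents $\lambda_{\mu}$ are controlled positive combinations of the $\lambda_j$'s.

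For the eigenvector identity $v_\lambda(f)=wt_\lambda(f)f$, the plan is to compute $f\circ\alpha_\lambda(z)$ and differentiate at $z=0$. Fix a single $f_j\in\mathcal{B}_i$, a $\wt{T}$-weight vector of weight $-\mu_j$ in $V(\omega_i)^\vee$, so that $f_j$ is supported on the weight space $V_{\mu_j}(\omega_i)$. Formula (5) says that $\alpha_\lambda(z)$ acts on $V_{\mu_j}(\omega_i)$ by the scalar $\exp(z\lambda_{\mu_j})$, so viewing $f_j$ as a holomorphic function on $L$ (via the projection $L\to L_i\subset V(\omega_i)$) we obtain the functional identity $f_j\circ\alpha_\lambda(z)=\exp(z\,wt_\lambda(f_j))\,f_j$. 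By multiplicativity, for $f=f_1\cdots f_k$ with $f_j\in\mathcal{B}_1\cup\mathcal{B}_2$,
\[
f\circ\alpha_\lambda(z)=\exp\!\Bigl(z\sum_{j=1}^k wt_\lambda(f_j)\Bigr)\,f.
\]
It remains to check that $\sum_j wt_\lambda(f_j)=wt_\lambda(f)$ in the sense of the paper's definition. Group the factors by the $\mathcal{B}_i$ they come from, so $f=g\cdot h$ with $g$ a degree-$r_1$ product from $\mathcal{B}_1$ and $h$ a degree-$r_2$ product from $\mathcal{B}_2$. The additivity observation $\lambda_{\mu_1}+\cdots+\lambda_{\mu_r}=\lambda_{\mu,r}$ recorded just before the lemma (applied separately to $g$ and to $h$) identifies $\sum_j wt_\lambda(f_j)$ with $wt_\lambda(g)+wt_\lambda(h)=wt_\lambda(f)$. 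Differentiating at $z=0$ then gives $v_\lambda(f)=wt_\lambda(f)\,f$.

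For the finiteness assertion, the plan is a half-plane argument bounding the degree $k$. Weak hyperbolicity (1) puts all the $\lambda_j$ strictly inside some open half plane of $\bc$ bounded by a line through the origin, so there is an $\br$-linear functional $\ell\colon\bc\to\br$ with $\ell(\lambda_j)>0$ for all $j=1,\dots,N$; set $c:=\min_j \ell(\lambda_j)>0$. By (3)--(4), every $wt_\lambda(f_j)$ with $f_j\in\mathcal{B}_1\cup\mathcal{B}_2$ has the form $\sum_{k}d_{\mu_j,k}\lambda_k$ where the sum is over the appropriate block and each $d_{\mu_j,k}$ is a \emph{positive} integer; hence $\ell(wt_\lambda(f_j))\geq c$. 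Additivity yields
\[
\ell(z_0)=\sum_{j=1}^k \ell(wt_\lambda(f_j))\geq kc,
\]
so $k\leq \ell(z_0)/c$. Since $\mathcal{B}_1\cup\mathcal{B}_2$ is finite, there are only finitely many monomials of bounded degree in its elements, and the first claim follows.

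The main obstacle is conceptual rather than technical: one must recognize that the inflation $d'$ introduced in (3) was designed precisely so that every $d_{\mu,k}>0$, which in turn is what rules out cancellation in $\sum_j wt_\lambda(f_j)$ and allows the linear functional $\ell$ to force a bound on the number of factors. Without the strict positivity $d_{\mu,k}\geq 1$, arbitrarily long monomials of prescribed $\lambda$-weight could arise.
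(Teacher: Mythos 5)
Your proposal is correct and follows essentially the same route as the paper: the eigenvector identity is verified on weight-vector basis elements via the explicit diagonal action (5) together with the additivity of $wt_\lambda$ recorded before the lemma, and finiteness comes from weak hyperbolicity (12). Your linear-functional/half-plane bound on the degree $k$ is just an explicit spelling-out of the paper's one-line observation that only finitely many non-negative integer combinations of the $\lambda_\mu$ can equal a given $z_0$.
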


\begin{proof}
The first statement is a consequence of weak hyperbolicity (see (12)).  Indeed, since $0\leq \arg(\lambda_{\mu})<\pi$  for all $\mu\in \Lambda(\omega_i), i=1,2$, given any 
complex number $z_0$, there are only finitely many non-negative integers $c_j$ such that 
$\sum c_j\lambda_{\mu_j}=z_0$.  

As for the second statement, we need only verify this for $f\in \mathcal{B}_i, i=1,2.$ Suppose that 
$f\in \mathcal{B}_1$ and that $f$ is of  weight $-\mu$, $\mu\in \Lambda(\omega_1)$, say. Then, for any 
$(u_1,u_2)\in L$, writing 
$u_1=\sum_{\nu\in\Lambda(\omega_1)}u_\nu$, using linearity and the fact that $f(u_1,u'_2)=f(u_\mu)$ we have
\[
\begin{array}{lll}
v_\lambda(f)(u_1,u_2)&=&\lim_{w\to 0}(f(\alpha_\lambda(w)(u_1,u_2))-f(u_1,u_2))/w\\
&=&\lim_{w\to 0} (f(\exp(\lambda_\mu w)u_\mu)-f(u_1))/w\\
&=& \lim_{w\to 0}(\frac{\exp(\lambda_\mu w)-1}{w})f(u_1)\\
&=&\lambda_\mu f(u_1)\\
&=&\lambda_\mu f(u_1,u_2).
\end{array}
\]
This completes the proof.
\end{proof}

We assume that $F_i,F_{i,\beta},\beta\in R_{P_i}$, are in $\mathcal{B}_i$, $i=1,2$. 

Let $\mathcal{M}\subset \bc(\wt{U}_1\times \wt{U}_2)$ denote the multiplicative group of all Laurent monomials 
in $F_i,F_{i,\beta},\beta\in R_{P_i}, i=1,2$.  One has a homomorphism 
$wt_\lambda:\mathcal{M}\lr Z(\lambda)$. Denote by $\mathcal{K}$ the kernel of $wt_\lambda$.   Evidently, $\mathcal{M}$ is a free abelian group of rank $\dim L$.

\begin{lemma} \label{independence} 
With the above notations, $wt_\lambda:\mathcal{M}\lr Z(\lambda)$ is surjective.  Any $\bz$-basis $h_1,\ldots, h_k$ of $\mathcal{K}$ is algebraically independent over $\bc$. 
\end{lemma}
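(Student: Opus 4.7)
The two claims can be handled separately. For surjectivity, the plan is to compute the $\lambda$-weights of the generators $F_i,F_{i,\beta}$ of $\mathcal{M}$ and then verify that the subgroup of $\bc$ these weights generate already contains every $\lambda_\mu$ with $\mu\in\Lambda(\omega_i)$. For algebraic independence, the plan is to reduce, via Lemma~\ref{jacobian}, to the fact that distinct Laurent monomials in algebraically independent coordinate functions on $\wt{U}_1\times\wt{U}_2$ are $\bc$-linearly independent.

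For the generators: since $F_i\in V(\omega_i)^\vee$ is the lowest weight vector with $F_i(v_0)=1$, it lies in $\hom(V_{\omega_i}(\omega_i);\bc)$, so $wt_\lambda(F_i)=\lambda_{\omega_i}$. For $\beta\in R_{P_i}$, the vector $F_{i,\beta}=X_\beta F_i$ is a weight vector in $\hom(V_{\omega_i-\beta}(\omega_i);\bc)$, and it is nonzero because $\langle\omega_i,\beta^\vee\rangle\neq 0$ for $\beta\in R_{P_i}$; hence $wt_\lambda(F_{i,\beta})=\lambda_{\omega_i-\beta}$. From formula~(3) one sees that $\mu\mapsto \lambda_\mu$ is affine in the fundamental-weight coordinates of $\mu$, so $\lambda_\mu-\lambda_{\omega_i}=-\bar\lambda(\omega_i-\mu)$ for a $\bz$-linear map $\bar\lambda\colon\Lambda\to\bc$ satisfying $\bar\lambda(\beta)=\lambda_{\omega_i}-\lambda_{\omega_i-\beta}$ for each $\beta\in R_{P_i}$. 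Thus surjectivity reduces to the representation-theoretic assertion that, for every $\mu\in\Lambda(\omega_i)$, the element $\omega_i-\mu$ is a $\bz_{\geq 0}$-linear combination of elements of $R_{P_i}$. This is proved by a PBW argument: since the parabolic $P_i$ stabilizes the line $\bc v_0$, the relation $Y_\beta v_0=0$ holds for every positive root $\beta$ in the Levi of $P_i$ (the weight $\omega_i-\beta$ differs from $\omega_i$, yet the result must lie in $\bc v_0$). Consequently $V(\omega_i)=U(\mathfrak{u}_{P_i}^-)v_0$ with $\mathfrak{u}_{P_i}^-=\bigoplus_{\beta\in R_{P_i}}\bc Y_\beta$, so every weight of $V(\omega_i)$ has the desired form. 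Substituting yields a concrete $\bz$-expression $\lambda_\mu=(1-\sum n_\beta)\lambda_{\omega_i}+\sum n_\beta\lambda_{\omega_i-\beta}$, giving surjectivity.

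For algebraic independence, Lemma~\ref{jacobian} identifies the $2+r_1+r_2$ functions $F_i,F_{i,\beta}$ with an algebraic coordinate system on $\wt{U}_1\times\wt{U}_2\cong(\bc^*)^2\times\bc^{r_1+r_2}$; so they are algebraically independent over $\bc$ and distinct Laurent monomials in them are $\bc$-linearly independent in $\bc(\wt{U}_1\times\wt{U}_2)$. An algebraic relation $\sum_{\mathbf{n}\in \bz_{\geq 0}^k}c_{\mathbf{n}}\prod_j h_j^{n_j}=0$ thus becomes a $\bc$-linear relation among elements $\prod_j h_j^{n_j}\in\mathcal{M}$, and since $h_1,\ldots,h_k$ is a $\bz$-basis of the free abelian group $\mathcal{K}\subset\mathcal{M}$, distinct multi-indices $\mathbf{n}$ produce distinct Laurent monomials; linear independence forces every $c_{\mathbf{n}}$ to vanish.

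The main obstacle is the PBW/representation-theoretic input $\omega_i-\mu\in\sum_{\beta\in R_{P_i}}\bz_{\geq 0}\beta$; once that is in hand, both the computation of the weights of the generators and the algebraic independence step are essentially formal.
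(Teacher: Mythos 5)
Your proof is correct. The algebraic-independence half is essentially the paper's own argument: via Lemma \ref{jacobian} the functions $F_i,F_{i,\beta}$ form an algebraic coordinate system on $\wt{U}_1\times\wt{U}_2$, distinct Laurent monomials in them are $\bc$-linearly independent, and a polynomial relation among the $h_j$ would force two distinct exponent vectors to give the same element of $\mathcal{M}$, contradicting that $h_1,\ldots,h_k$ is a $\bz$-basis of $\mathcal{K}$. For surjectivity, however, you take a genuinely different route. The paper argues that for $\nu=\sum a_\mu\lambda_\mu\in Z_i(\lambda)$ one may pick dual weight vectors $b_\mu\in\mathcal{B}_i$ and observe that every monomial in $F_i^{\pm1},F_{i,\beta}$ occurring in the expansion of $b_\mu|\wt{U}_i$ has $\lambda$-weight exactly $\lambda_\mu$ (since both $b_\mu$ and the coordinate monomials are eigenvectors of $v_\lambda$, cf. Lemma \ref{eigenvectors}), so each $\lambda_\mu$ is already realized by a monomial in $\mathcal{M}$. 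You instead compute the weights of the generators directly, $wt_\lambda(F_i)=\lambda_{\omega_i}$ and $wt_\lambda(F_{i,\beta})=\lambda_{\omega_i-\beta}$ (with $F_{i,\beta}\neq0$ because $\omega_i(H_\beta)\neq0$ for $\beta\in R_{P_i}$, exactly as in the discussion preceding Lemma \ref{jacobian}), note from (3)--(4) that $\mu\mapsto\lambda_\mu$ is affine so that $\lambda_\mu=(1-\sum n_\beta)\lambda_{\omega_i}+\sum n_\beta\lambda_{\omega_i-\beta}$ whenever $\omega_i-\mu=\sum_{\beta\in R_{P_i}}n_\beta\beta$, and supply the representation-theoretic input $V(\omega_i)=U(\mathfrak{u}_{P_i}^-)v_0$ by the standard PBW argument (the Levi's negative root vectors kill $v_0$ and $\mathfrak{u}_{P_i}^-$ is an ideal of $\mathfrak{n}^-$). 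What your version buys is an explicit monomial $F_i^{1-\sum n_\beta}\prod F_{i,\beta}^{n_\beta}$ of weight $\lambda_\mu$ and no appeal to how weight vectors expand on the big cell; what the paper's version buys is brevity, since it avoids the PBW step by quoting only the weight-compatibility of the restriction to $\wt{U}_i$. Both are complete; your extra input is standard and correctly justified.
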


\begin{proof}  Suppose that $\nu\in Z_i(\lambda)$. Write 
$\nu=\sum a_\mu\lambda_\mu$ and choose $b_\mu\in \mathcal{B}_i$ to be of weight $\mu$.  
Then $wt_\lambda(\prod_{\mu} b_\mu^{a_\mu})=\nu$.  
On the other hand, $wt_\lambda(b_\mu)$ equals the $\lambda$-weight of any monomial in the 
$F_i^{-1},F_i,F_{i,\beta},\beta\in R_{P_i}$ that occurs in $b_\mu|\wt{U}_i$.  The first assertion follows from this. 

Let, if possible, $P(z_1,\ldots, z_k)=0$ be a polynomial equation satisfied by $h_1,\ldots, h_k$.  Note that the $h_j$ are certain Laurent monomials in a transcendence basis 
of the field $\bc(\wt{U}_1\times\wt{U}_2)$ of {\it rational functions} on the affine variety $\wt{U}_1\times \wt{U}_2$. Therefore there must exist monomials  $z^{{\bf m}}$ and $z^{{\bf m}'}$, ${\bf m}\neq {\bf m}'$,   occurring  in $P(z_1,\ldots, z_k)$ with non-zero coefficients such that $h^{{\bf m}}=h^{{\bf m}'}\in \bc(\wt{U}_1\times \wt{U}_2)$.  Hence $h^{{\bf m}-{\bf m}'}=1$. This contradicts the hypothesis that the $h_j$ are linearly independent in the multiplicative group $\mathcal{K}$.    
\end{proof}

We now turn to the proof of Theorem \ref{transcendence}.

\noindent
{\it Proof of Theorem \ref{transcendence}:}
By definition, any meromorphic function on $S_\lambda(L)$ is a quotient  $f/g$ where $f$ and $g$ are 
holomorphic sections of a holomorphic line bundle $E_z$.  Any holomorphic section $f:S(L)\lr E_z$ 
defines a holomorphic function on $L$, denoted by $f$, which satisfies Equation (11).
By the normality of $\hat{L}_1\times \hat{L}_2$, the function $f$ then extends uniquely to a function on $\hat{L}_1\times \hat{L}_2$ which is 
again denoted $f$.   Thus we may write $f=\sum_{r,s\geq 0} f_{r,s}$ where $f_{r,s}\in V(r\omega_1)^\vee\otimes V(s\omega_2)^\vee$.  Now $ v_{\lambda}f=a f$ and $v_\lambda f_{r,s}\in V(r\omega_1)^\vee\otimes V(s\omega_2)^\vee$ implies that $v_\lambda(f_{r,s})=a f_{r,s}$ for all $r,s\geq 0$ where $a=-2\pi\sqrt{-1}z$.  This implies that $wt_\lambda(f_{r,s})=a$ for all $r,s\geq 0$.   This implies, by Lemma \ref{eigenvectors}, 
that $f_{r,s}=0$ for sufficiently large $r,s$ and so $f$ is {\it algebraic}.

Now writing  $f$ and $g$ restricted to  $\wt{U}_1\times \wt{U}_2$  as a polynomial in the  
the coordinate functions  $F_i^{\pm}, F_{i,\beta}, i=1,2,$ introduced above, 
it follows easily that $f/g$ belongs to the field  $\bc(\mathcal{K})$ generated by $\mathcal{K}$. 
Evidently $\mathcal{K}$---and hence the field $\bc(\mathcal{K})$---is contained in $\kappa(S_\lambda(L))$. 
Therefore $\kappa(S_\lambda(L))$ equals $\bc(\mathcal{K})$. By Lemma \ref{independence} the field $\bc(\mathcal{K})$ is purely transcendental over $\bc$.  

Finally, since $Z(\lambda)$ is of rank at least $2$ and since $wt_\lambda:\mathcal{M}\lr Z(\lambda)$ is surjective, $tr. deg(\kappa(S_\lambda(L))=rank(\mathcal{K})\leq rank(\mathcal{K})
-2=\dim(L)-2=\dim(S_\lambda(L))-1$.  \hfill $\Box$

\begin{remark}{\em 
(i)   We have actually shown that the transcendence degree of $\kappa(S_\lambda(L))$ equals the 
rank of $\mathcal{K}$.  In the case when $X_i$ are projective spaces, this was observed by \cite{ln}.   When $\lambda$ is of scalar type, $tr. deg(\kappa(S_\lambda(L))=\dim (S_\lambda(L))-1$. \\
(ii) Theorem \ref{transcendence} implies that any algebraic reduction of $S_\lambda(L)$ is a rational 
variety.  In the case of scalar type, one has an elliptic 
curve bundle $S_\lambda(L)\lr X_1\times X_2$. 
(Cf. \cite{sankaran}.)  Therefore this bundle projection 
yields an algebraic reduction. 
In the general case however, it is an interesting problem 
to construct explicit algebraic reductions of these 
compact complex manifolds.  (We refer the reader to \cite{peternell} and references therein to 
basic facts about algebraic reductions.)\\
(iii)  We conjecture that $\kappa(S_\lambda(L))$ is 
purely transcendental for $X_i=G_i/P_i$ where $P_i$ is 
any parabolic subgroup and $\bar{L}_i$ is any negative ample line bundle over $X_i$, where $S_\lambda(L)$ has 
any linear type complex structure.  
} 
\end{remark}

\noindent
{\bf Acknowledgments:}  The authors thank D. S. Nagaraj for helpful discussions and for his valuable comments.  Also the authors thank the referee for his/her critical comments which resulted in 
improved clarity of exposition. 


\end{document}